\documentclass[preprint]{imsart}

\usepackage[english]{babel}
\usepackage[T1]{fontenc}
\usepackage[utf8]{inputenc}
\usepackage{lmodern}

\usepackage{calc}
\usepackage[a4paper,portrait,left=3cm,right=3cm,top=3cm,bottom=3cm]{geometry}

\usepackage{amsmath}
\usepackage{amsfonts}
\usepackage{amssymb}
\usepackage{amsthm}
\usepackage{stmaryrd}
\usepackage{euscript}
\usepackage{xspace}
\usepackage{bm}
\usepackage{enumerate}
\usepackage{hyperref}
\usepackage{graphicx}
\usepackage{subcaption}
\usepackage[normalem]{ulem}
\usepackage[numbers]{natbib}

\hyphenpenalty=5000

\startlocaldefs

\newtheorem{definition}{Definition}

\newtheorem{lemma}{Lemma}[section]
\newtheorem{theorem}{Theorem}
\newtheorem{proposition}[theorem]{Proposition}

\newtheorem*{lemma*}{Lemma}

\theoremstyle{definition}

\newtheorem{remark}{Remark}

\numberwithin{equation}{section}


\DeclareMathOperator{\supp}{supp}

\DeclareMathOperator{\tr}{tr}
\DeclareMathOperator{\Id}{Id}


\newcommand{\e}{\ensuremath{\mathrm{e}}\xspace}

\newcommand{\R}{\ensuremath{\mathbb{R}}\xspace}

\newcommand{\N}{\ensuremath{\mathbb{N}}\xspace}
\newcommand{\Z}{\ensuremath{\mathbb{Z}}\xspace}

\newcommand{\Hbb}{\ensuremath{\mathbb{H}}\xspace}

\newcommand{\Tbb}{\ensuremath{\mathbb{T}}\xspace}

\newcommand{\Vbb}{\ensuremath{\mathbb{V}}\xspace}

\newcommand{\Nb}{\ensuremath{\mathbf{N}}\xspace}


\newcommand{\Bi}{\ensuremath{\mathcal B}\xspace}

\newcommand{\Di}{\ensuremath{\mathcal D}\xspace}
\newcommand{\Ei}{\ensuremath{\mathcal E}\xspace}

\newcommand{\Gi}{\ensuremath{\mathcal G}\xspace}
\newcommand{\Hi}{\ensuremath{\mathcal H}\xspace}
\newcommand{\Ii}{\ensuremath{\mathcal I}\xspace}
\newcommand{\Ji}{\ensuremath{\mathcal J}\xspace}

\newcommand{\Li}{\ensuremath{\mathcal L}\xspace}

\newcommand{\Ni}{\ensuremath{\mathcal N}\xspace}

\newcommand{\Ti}{\ensuremath{\mathcal T}\xspace}
\newcommand{\Vi}{\ensuremath{\mathcal V}\xspace}
\newcommand{\Wi}{\ensuremath{\mathcal W}\xspace}
\newcommand{\Xii}{\ensuremath{\mathcal X}\xspace}


\newcommand{\dimH}{\ensuremath{{\dim}_{\text{\normalfont\tiny H}}}\xspace}
\newcommand{\dimP}{\ensuremath{{\dim}_{\text{\normalfont\tiny P}}}\xspace}

\newcommand{\dimBu}{\ensuremath{\overline{\dim}_{\text{\normalfont\tiny B}}}\xspace}





\newcommand{\vset}{\ensuremath{{\emptyset}}\xspace}


\newcommand{\pth}[1]{(#1)}
\newcommand{\pthb}[1]{\bigl(#1\bigr)}
\newcommand{\pthB}[1]{\Bigl(#1\Bigr)}
\newcommand{\pthbb}[1]{\biggl(#1\biggr)}

\newcommand{\bkt}[1]{[#1]}
\newcommand{\bktb}[1]{\bigl[#1\bigr]}

\newcommand{\brc}[1]{\{#1\}}
\newcommand{\brcb}[1]{\bigl\{#1\bigr\}}
\newcommand{\brcB}[1]{\Bigl\{#1\Bigr\}}
\newcommand{\brcbb}[1]{\biggl\{#1\biggr\}}

\newcommand{\bk}[1]{\langle#1\rangle}

\newcommand{\dt}{\ensuremath{\mathrm d}\xspace} 
\newcommand{\eqdef}{:=}

\newcommand{\ivoo}[1]{\ensuremath{(#1)}}
\newcommand{\ivoob}[1]{\ensuremath{\bigl(#1\bigr)}}

\newcommand{\ivof}[1]{\ensuremath{(#1]}}
\newcommand{\ivofb}[1]{\ensuremath{\bigl(#1\bigr]}}

\newcommand{\ivfo}[1]{\ensuremath{[#1)}}
\newcommand{\ivfob}[1]{\ensuremath{\bigl[#1\bigr)}}

\newcommand{\ivff}[1]{\ensuremath{[#1]}}
\newcommand{\ivffb}[1]{\ensuremath{\bigl[#1\bigr]}}

\newcommand{\iivff}[1]{\ensuremath{\llbracket#1\rrbracket}}

\newcommand{\abs}[1]{\lvert#1\rvert}

\newcommand{\ceil}[1]{\lceil#1\rceil}

\renewcommand{\Pr}{\ensuremath{\mathbb P}\xspace}

\newcommand{\prB}[2][]{\mathbb{P}#1\pthB{#2}}

\newcommand{\esp}[2][]{\mathbb{E}#1\bkt{#2}}

\newcommand{\varb}[2][]{\mathrm{Var}#1\pthb{\hspace{1pt}#2\hspace{1pt}}}

\newcommand{\varbb}[2][]{\mathrm{Var}#1\pthbb{#2}}

\newcommand{\varcb}[3][]{\mathrm{Var}#1\pthb{\hspace{1pt}#2\bigm|#3\hspace{1pt}}}

\newcommand{\pthc}[2]{\pth{\hspace{1pt}#1\hspace{1.5pt}|\hspace{1.5pt}#2\hspace{1pt}}}
\newcommand{\pthcb}[2]{\pthb{\hspace{1pt}#1\bigm|#2\hspace{1pt}}}

\newcommand{\pthcbb}[2]{\pthbb{#1\biggm|#2}}



\newcommand{\indi}{\ensuremath{\mathbf{1}}\xspace}
\newcommand{\eps}{\varepsilon}

\DeclareMathOperator{\cov}{Cov}

\newcommand{\vsp}{\vspace{.15cm}}

\endlocaldefs

\begin{document}

\begin{frontmatter}

\title{Singularities of stable super-Brownian motion}
\runtitle{Singularities of stable super-Brownian motion}

\begin{aug}
\author{\fnms{Paul} \snm{Balança}\thanksref{t2}\ead[label=e1]{paul.balanca@gmail.com}}\and%
\author{\fnms{Leonid} \snm{Mytnik}\thanksref{t2}\ead[label=e2]{leonid@ie.technion.ac.il}}%
\thankstext{t2}{Research supported by the Israel Science Foundation grant 1325/14.}%
\runauthor{P. Balança \& L. Mytnik}%
\affiliation{Technion}%
\address{Faculty of Industrial Engineering and Management\\%
Technion Israel Institute of Technology\\%
Haifa 32000, Israël\\%
\printead{e1,e2}}
\end{aug}

\begin{abstract}
We investigate in this work the spectrum of singularities of super-Brownian motion with stable branching. The main purpose is to provide a uniform description of the latter in high dimension $d\geq\tfrac{2}{\gamma-1}$, presenting the singularities existing at every time $t$ and characterising as well the set of random times at which singularities of higher order appear. In lower dimensions, we give a partial description of the singularities which complement the recent study of the density by \citet{Mytnik.Wachtel-2015}.
\end{abstract}

\begin{keyword}
  \kwd{Hölder regularity}
  \kwd{multifractal spectrum}
  \kwd{super-Brownian motion}
\end{keyword}

\begin{keyword}[class=AMS]
  \kwd{60G07}
  \kwd{60G17}
  \kwd{60G22}
  \kwd{60G44}
\end{keyword}

\end{frontmatter}


\section{Introduction and main results}

A so-called stable super-Brownian motion  $X_t(\dt x)$ in dimension $d\geq 1$ with branching index $\gamma\in\ivof{1,2}$ is a finite measure-valued Markov process related to the log-Laplace equation
\begin{align}  \label{eq:log_laplace}
  \frac{\partial }{\partial t} u(t,x) = \Delta u(t,x) + a u(t,x) - bu(t,x)^\gamma,
\end{align}
where $a\in\R$ and $b>0$ are any fixed constants and $\Delta$ is a $d$-dimensional Laplacian. We will sometimes call this process $\gamma$-stable super-Brownian motion
or $\gamma$-SBM.
The underlying motion of $X$ is characterised by the operator $\Delta$ appearing in \eqref{eq:log_laplace}, and therefore, corresponds to Brownian motion. Note that the use of a fractional Laplacian $\Delta_\alpha$
instead of $\Delta$  leads to a more general superprocess with symmetric stable motion.
Some regularity properties of such superprocessess in dimension $d=1$ were investigated in \cite{Fleischmann-1988,Fleischmann.Mytnik.ea-2010}). Moreover, the continuous state branching mechanism of the superprocess is described by the function
$$\psi(u)=-au+bu^\gamma,$$
and such branching mechanism belongs to the domain of attraction of a stable law. In the rest of this work, we will assume that $a=0$, $b=1$ i.e. the branching is critical, and denote by $\zeta$ the extinction time of the super-process $X_t(\dt x)$, i.e. $\zeta=\inf\brc{t:X_t(\R^d) = 0}$. In the case of critical branching that we consider, $\zeta$ is known to be almost surely finite.
Let  $M_f(\R^d)$ be the space of finite measures on $\R^d$ equipped with
weak topoogy.
In what follows, we will also assume that $X_t(\dt x)$ starts from a deterministic measure $\mu$ with finite mass, i.e. $\mu\in M_f(\R^d)$. \vsp

A large literature has investigated the fractal geometry of superprocesses, and in particular super-Brownian motion, in high dimension. Whenever $d\geq\tfrac{2}{\gamma-1}$, the measure-valued process $X_t(\dt x)$, at fixed time, is known to be a singular with respect to the Lebesgue measure. Moreover, in the case of stable branching mechanism, the Hausdorff and packing dimensions of its support are well known: for any fixed $t>0$,
\begin{align}  \label{eq:sbm_support_fixt}
  \dimH \supp X_t = \dimP \supp X_t = d\wedge \frac{2}{\gamma-1}, \quad\text{a.s. on the event }\quad \brc{X_t(\R^d)>0},
\end{align}
where we use $\dimH$ (resp. $\dimP$) to denote Hausdorff (resp. packing) dimensions of the sets. We refer to \cite{Dawson.Hochberg-1979,Dawson.Perkins-1991} for the quadratic branching, and \cite{Duquesne.LeGall-2005} for the more general stable case (even though not directly stated for the packing dimension, the result follows easily from the arguments presented in \cite{Duquesne.LeGall-2005}).

This result has later been extended by \citet{Delmas-1999} and \citet{Duquesne.LeGall-2005} to more general branching mechanisms, and to more general set of times.
In particular, for $\gamma$-stable super-Brownian motion $X$, it was proved in \citet{Delmas-1999}, that for any non-empty closed set $F\subset\ivoo{0,\infty}$,
\begin{align}  \label{eq:sbm_support_fixt_ext}
  \dimH \overline{\bigcup_{t\in F} \supp X_t } = d\wedge \pthB{ \frac{2}{\gamma-1} + 2\dimH F } \quad\text{on the event}\quad\brcb{F\subset\ivoo{0,\zeta}}.
\end{align}

The question of existing of Hausdorff gauge function of the measure $X_t(dx)$
was also thoroughly studied in the literature. For example, in the case of quadratic  branching  ($\gamma=2$),
\citet{Perkins-1989} have determined the exact Hausdorff gauge function $\phi$ of the measure $X_t(\dt x)$ whenever $d\geq 3$: he showed that $\phi(r)=r^2\log\log 1/r$. The (non)-existence of a packing gauge function has also been investigated by \citet{LeGall.Perkins.ea-1995}. Several of these results have recently been extended to more general branching mechanisms by \citet{Duquesne-2009,Duquesne.Duhalde-2014}. In dimension $d\geq 4$ and the case of quadratic  branching, \citet{Tribe-1989} has also obtained a uniform extension of the properties \eqref{eq:sbm_support_fixt} and \eqref{eq:sbm_support_fixt_ext}.\vsp

In this work, we are interested in studying the fine structure of the measure $X_t(\dt x)$ in terms of exceptional mass distribution. More specifically, we aim to characterise the asymptotic behaviour of $X_t(B(x,r))$ as $r\rightarrow 0$. It is known that at a typical point, the stable super-Brownian motion behaves ``well''. In particular, one can show that for any fixed $t>0$
\begin{align}  \label{eq:weak_asymp}
  \lim_{r\rightarrow 0} \frac{\log X_t(B(x,r))}{\log r} = \frac{2}{\gamma-1}\quad X_t(\dt x)\text{-a.e.} \quad \Pr_\mu\text{-a.s.}
\end{align}
where the above limit at $x$, if exists, is sometimes called the \emph{local dimension} of the measure $X_t$ at $x$ (see for instance \cite[Chap. 10]{Falconer-1997}). In fact, surprisingly, we could not find statement \eqref{eq:weak_asymp} in the literature on superprocesses. However, it easily follows by putting together a set of well-known results in fractal geoemetry and standard proofs on stable super-Brownian motion (we refer to Appendix \ref{sec:appendix2} for a short proof).

Nevertheless, there may exist points at which the local mass $X_t(B(x,r))$ can be exceptionally large (or thin). Such a behaviour has been investigated by \citet{Perkins.Taylor-1998} on quadratic super-Brownian motion, proving the existence of points with exceptional thin mass. On the other hand, the former also proved that SBM has no points of large masses: for every $t>0$,
\begin{align}  \label{eq:unif_sbm_holder}
  \Pr_\mu\text{-a.s.}\quad\forall x\in\supp(X_t);\quad\liminf_{r\rightarrow 0} \frac{\log X_t(B(x,r))}{\log r} = 2.
\end{align}

The purpose of this work is to investigate the existence of such points with exceptional large masses on the super-Brownian motion with stable branching. For that purpose, we recall the notion of pointwise Hölder exponent of a measure: at every $x\in\R^d$, one defines
\begin{align}  \label{eq:def_pointwise_exponent}
  \alpha_{X_t}(x) \eqdef \liminf_{r\rightarrow 0} \frac{\log X_t\pthb{B(x,r)}}{\log r}.
\end{align}
Informally, the latter describes the asymptotic order of the largest masses appearing around $x$. Note that previous definition is only licit whenever $\alpha_{X_t}(x)\in\ivfo{0,d}$. In order to characterise higher orders of regularity, one needs to study the variations of the density of $X_t(\dt x)$, as recently done by \citet{Mytnik.Wachtel-2015} on the density of one dimensional superprocesses.

The study of the pointwise Hölder structure of measures (or stochastic processes) is usually known as \emph{multifractal analysis}, and focuses on determining the so-called \emph{multifractal spectrum}, or \emph{spectrum of singularities}, defined by:
\begin{align}  \label{eq:def_spectrum}
  \forall h\geq 0;\quad d_{X_t}(h,V)\eqdef \dimH E(h,X_t)\cap V\quad\text{where } E(h,X_t) \eqdef \brcb{x\in\R^d : \alpha_{X_t}(x) = h},
\end{align}
and $V$ is any non-empty open set in $\R^d$.
Initially introduced by \citet{Frisch.Parisi-1985} in their study of turbulence, this formalism has proved to be relevant to investigate a much larger class of random (or deterministic) measures. In the recent probability literature, \citet{Dembo.Peres.ea-2000} and \citet{Shieh.Taylor-1998} have for instance characterised the multifractal structure of the occupation measure of Brownian motion (and stable processes). This formalism has also been extended to the study of the regularity of function and sample paths, leading to several important recent works in probability: Lévy processes \cite{Jaffard-1999,Durand-2009,Balanca-2014}, super-critical Galton--Watson trees \cite{Moerters.Shieh-2002,Moerters.Shieh-2008}, density of super-processes \cite{Mytnik.Wachtel-2015}, fragmentation processes and continuous random trees \cite{Berestycki-2003,Berestycki.Berestycki.ea-2007,Balanca-2015b} to name but a few.\vsp

In our main following results, we present a uniform description of the multifractal structure of stable super-Brownian motion, the latter combining a component of the spectrum of singularities existing at every time, and the existence of exceptional times where exceptional large masses appear.
\begin{theorem}  \label{th:sbm_spectrum1}
  Suppose $\gamma\in\ivoo{1,2}$, $\mu\in M_f(\R^d)$ and under $\Pr_\mu$, $X_t(\dt x)$ is a stable super-Brownian motion starting from $\mu$. The following statements are then satisfied $\Pr_\mu$-a.s.
  \begin{enumerate}[(a)]
    \item Assuming $d\geq 2$, the spectrum of singularities of the measure $X_t(\dt x)$ is given by:
    \begin{align}  \label{eq:sbm_spectrum}
      \quad \dimH \,E(h,X_t)\cap V = \gamma h - 2,\quad\forall h\in\ivffb{\tfrac{2}{\gamma}, \tfrac{2}{\gamma-1}}\cap\ivfob{0,d},
    \end{align}
    for any $t>0$ and open set $V\subset\R^d$ such that $X_t(V)>0$.
    \item For any dimension $d\geq 1$, the spectrum of times where "small exponents" appear, is given by:
    \begin{align}  \label{eq:sbm_spectrum-times}
      \dimH \brcb{t>0 : E(h,X_t) \neq \vset } = \frac{\gamma h}{2}, \quad \forall h\in\ivfob{0,\tfrac{2}{\gamma}}\cap\ivfob{0,d},
    \end{align}
    Moreover, for every $t>0$ and any $h\in\ivfob{0,\tfrac{2}{\gamma}}\cap\ivfob{0,d}$, $E(h,X_t)$ is either empty or has zero Hausdorff dimension.
  \end{enumerate}
\end{theorem}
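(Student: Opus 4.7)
The strategy rests on two tools: the Poisson cluster decomposition of $\gamma$-stable super-Brownian motion, and sharp two-sided tail estimates for $X_t(B(x,r))$ derived from the log-Laplace equation~\eqref{eq:log_laplace}. The central quantitative ingredient, which I would establish first, is a bound of the form
\begin{equation*}
  \prb{ X_t(B(x,r)) \ge r^h } \asymp r^{d + 2 - \gamma h}, \qquad h \in \ivff{\tfrac{2}{\gamma}, \tfrac{2}{\gamma-1}},
\end{equation*}
for fixed $t > 0$ as $r \to 0$. The heuristic is that the only efficient way to concentrate mass $\ge r^h$ in a ball of radius $r$ is for one cluster of age $\asymp r^2$ to carry an anomalously large total mass $\asymp r^h$; the probability of such a cluster is controlled by the $(\gamma-1)$-stable tail of the continuous-state branching process at time $r^2$, which, after compensating for the volume factor $r^d$, supplies the exponent $\gamma h - 2$. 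Obtaining this estimate with its matching lower bound and the necessary uniformity in $x$---which requires second-moment arguments to rule out the case where the mass is built up from many independent small clusters---is the main technical obstacle of the whole proof.

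For part~(a), the upper bound follows by standard dyadic covering. Fix $\eta > 0$ and $V$ open with $X_t(V) > 0$, and write
\begin{equation*}
  E(h, X_t) \cap V \subset \bigcap_{N \ge 1} \bigcup_{n \ge N} \brcB{ x \in V : X_t(B(x, 2^{-n})) \ge 2^{-n(h+\eta)} }.
\end{equation*}
At each scale $n$, I would extract a maximal $2^{-n}$-packing of $V$ by balls carrying mass at least $2^{-n(h+\eta)}$; the tail estimate yields an expected cardinality at most $C\cdot 2^{n(\gamma h - 2 + O(\eta))}$, and a Borel--Cantelli argument combined with the standard Hausdorff-covering interpretation gives $\dimH(E(h,X_t) \cap V) \le \gamma h - 2$. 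For the matching lower bound, I would use the Poisson cluster representation: conditional on the Lévy snake, $X_t$ decomposes as a Poisson sum of clusters, and the set of spatial locations hit by infinitely many clusters with ages $s_k \to 0$ and masses $m_k \asymp s_k^{h/2}$ (the balance producing local mass $\asymp r^h$ at radius $r = \sqrt{s_k}$) has Hausdorff dimension exactly $\gamma h - 2$ via a limsup-fractal / ubiquity argument in the spirit of~\cite{Jaffard-1999,Durand-2009,Balanca-2014}.

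For part~(b), the small-exponent regime $h < 2/\gamma$ can only arise at exceptional times: producing $\alpha_{X_t}(x) = h < 2/\gamma$ requires a sequence of jumps of the stable CSBP at times $t_k \uparrow t$, positions $x_k \to x$ and masses $y_k \asymp (t-t_k)^{h/2}$, since no single cluster of positive age can concentrate enough mass in $B(x,r)$ as $r \to 0$. The compensated Poisson measure of these jumps has intensity $\dt s \otimes X_{s-}(\dt x) \otimes y^{-1-\gamma} \dt y$, so the expected number of jumps at scale $2^{-k}$ matching the required scaling is of order $2^{-k(2-\gamma h)}$. A first-moment computation together with standard fractal-dimension bookkeeping then gives $\dimH \brc{ t > 0 : E(h,X_t) \ne \vset } \le \gamma h/2$, with a matching lower bound obtained from a limsup-fractal construction on the set of ``large'' jump times of the stable skeleton of $X$. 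Finally, the zero-dimensional statement at fixed $t$ for $h < 2/\gamma$ follows because $\sum_k 2^{-k(2-\gamma h)} < \infty$ in that regime, so almost surely only finitely many jumps across all scales satisfy the required scaling, forcing $E(h,X_t)$ to be either empty or contained in a finite set of past jump positions.
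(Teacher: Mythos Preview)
Your approach is genuinely different from the paper's, and the central difficulty of the theorem is not addressed in your sketch.

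The paper does \emph{not} argue via tail estimates for $X_t(B(x,r))$ and ubiquity. Instead it uses the L\'evy snake representation throughout: the multifractal spectrum of the local time $\ell^t$ on the underlying stable tree is already known from \cite{Balanca-2015b}, and the whole point is to transport that structure to $\Xii_t$ via the tree-indexed Brownian motion $W$. The key technical tools are (i) a local nondeterminism property for $W$ indexed by a compact $\R$-tree (Lemma~\ref{lemma:sbm_lnd}), which controls how many distinct tree vertices can be mapped by $W$ into a small spatial ball, and (ii) uniform-in-level bounds on the size of collections $\Tbb(t,\delta,H_p,\kappa)$ of subtrees with prescribed local-time behaviour (Lemmas~\ref{lemma:unif_bound_Th}--\ref{lemma:inf_local_time}). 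The upper bound on the spectrum comes from a Kaufman--Khoshnevisan-type moment argument (Lemma~\ref{lemma:sbm_clusters_ub}) bounding the number of such subtrees whose $W$-images cluster together; the lower bound pushes forward the tree measures $\mu_{t,h}$ of \cite{Balanca-2015b} through $W$ and verifies a mass distribution principle (Lemma~\ref{lemma:sbm_mass_principle_lb}).

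The gap in your plan is the uniformity in $t$. The statement requires the spectrum formula to hold $\Pr_\mu$-a.s.\ \emph{for every} $t>0$ simultaneously. A fixed-$t$ Borel--Cantelli on a dyadic grid, driven by a tail bound $\pr{X_t(B(x,r))\ge r^h}\lesssim r^{d+2-\gamma h}$, gives you the result at each fixed $t$ but says nothing about the uncountable family of exceptional null sets. The paper's architecture is built precisely to defeat this: the tree estimates are proved uniformly over all dyadic levels $j\delta_n$ at once, and the LND/moment machinery (Lemmas~\ref{lemma:sbm_clusters_ub} and \ref{lemma:sbm_clusters_lb1}) is applied to quantities that are simultaneously controlled for all $t$ by virtue of the tree structure. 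Your covering and ubiquity arguments, as written, are fixed-time and would need a substantial additional layer to become uniform; that layer is essentially what the snake approach provides.

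Two smaller points. First, your lower bound sketch does not explain how to get the \emph{exact} exponent $h$ rather than $\le h$: in the paper this is handled by showing $\nu_{t,2h}\bigl(\bigcup_{h'<h}E(2h',\Xii_t)\bigr)=0$ via the already-established upper bound, and then applying the mass distribution principle. A limsup/ubiquity construction alone does not give this. Second, in part~(b) your conclusion that $E(h,X_t)$ is contained in a finite set is stronger than what is claimed (zero Hausdorff dimension) and your argument for it---finitely many jumps across all scales---does not obviously hold uniformly in $t$, since the relevant jumps depend on $t$ through the constraint $t_k\uparrow t$.
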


\begin{remark}
  Theorem~\ref{th:sbm_spectrum1} provides a full characterisation of the large masses of stable super-Brownian motion whenever $d\geq\tfrac{2}{\gamma-1}$.
It is derived in~(a) that at every level $t\in\ivoo{0,\zeta}$, a full of spectrum of singularities exists between the "typical"
$h=\tfrac{2}{\gamma-1}$ (see~\eqref{eq:weak_asymp}) and the smaller exponent $h=\tfrac{2}{\gamma}$. Moreover in~(b) it is shown that there are also points with H\"older exponenet less than $2/\gamma$, and the spectrum of times at which such points appear is derived there.

  The dimension $d\geq\tfrac{2}{\gamma-1}$ corresponds to the transient case of stable
super-Brownian motion. In this case the results of  Theorem~\ref{th:sbm_spectrum1}  are clearly consistent with the multifractal structure of stable trees presented in \cite{Balanca-2015b}; note on this ocasion that the proof of Theorem~\ref{th:sbm_spectrum1}  relies heavily on the tools introduced in \cite{Balanca-2015b}.
\end{remark}
\begin{remark}
As for results of Theorem~\ref{th:sbm_spectrum1}
for dimensions $d<\tfrac{2}{\gamma-1}$, they are also very interesting.
 As pointed out above, the limitations of the pointwise exponent \eqref{eq:def_pointwise_exponent} imply that the results are not as
 complete as in dimension $d\geq\tfrac{2}{\gamma-1}$. Nevertheless, these results also bring information on the behaviour of stable super-Brownian motion and its singularities of low orders. More precisely, it is known (see \citet{Fleischmann-1988}) that if $d<\tfrac{2}{\gamma-1}$, at every fixed time $t$, $X_t(\dt x)$ admits a density $X_t(x)$ (we conjecture the latter exists for all times $t>0$ at the exception of times of jumps of the process). Moreover, according to the work of \citet{Mytnik.Perkins-2003}, whenever $2\leq d<\tfrac{2}{\gamma-1}$, $X_t(x)$ is everywhere unbounded. The first part \eqref{eq:sbm_spectrum} of the spectrum provides a clear explanation of this fact: $2\leq d<\tfrac{2}{\gamma-1}$, then  at every $t>0$, there exists a set of singularities of $X_t(\dt x)$ with exponent $h\in\ivfob{\tfrac{2}{\gamma},d}$. Clearly, this set is dense on the set of points of positive density $X_t(x)$ and, at these points, the density can not be bounded (otherwise, one would clearly have $\alpha_{X_t}(x)\geq d$), and therefore explodes at a rate at least equal to $h-d$.

  In dimension $d=1$, Theorem~\ref{th:sbm_spectrum1} also provides interesting information on the behaviour of stable super-Brownian motion, complementing the recent work of \citet{Mytnik.Wachtel-2015}. In this case, it is known (see \cite{Mytnik.Perkins-2003,Fleischmann.Mytnik.ea-2010}) that at fixed time $t>0$, the density is Hölder continuous. Moreover,
  it follows from results in~\cite{Mytnik.Perkins-2003}, that the density is not continuous in time-space: in fact, it explodes in open neighbourhood of every time-space point $(t,x)$. Theorem~\ref{th:sbm_spectrum1} provides an informal explanation to this counter-intuitive behaviour: according to \eqref{eq:sbm_spectrum-times}, singularities of order stricly smaller than $1$ only appear at exceptional (dense, owing to the self-similarity of stable SBM)
  times , leading to an explosion of the density at these exceptional points. The previous theorem in fact implies that the Hausdorff dimension of this dense set of times is at least $\tfrac{\gamma}{2}$. On the other hand, typical times do not present any such singularity of small order, hence informally inplying the existence of a continuous density. The more complete understanding of the singularities of the one-dimensional density is the subject of on-going work.
\end{remark}
\vsp

One may observe that Theorem $2$ in \cite{Balanca-2015b} presents a stronger characterisation of the spectrum of singularities by studying the structure on any sub-tree $\Ti(F)=\cup_{a\in F}\Ti(a)$. Assuming $d\geq \tfrac{2\gamma}{\gamma-1}$, we may obtained as well an analogue uniform result on stable super-Brownian motion. For that purpose, we need to introduce the following notion (previously defined in \cite{Balanca-2015b}): a Borel set $F$ is said to satisfy a \emph{strong Frostman's lemma} if there exists a probability measure $\mu_F$ on $F$ (i.e. $\supp F\subseteq F$) such that for every $\eps>0$,
\begin{align}  \label{eq:strong_frostman}
  \exists r_0>0,\quad  \forall x\in F,\ \forall r\in\ivoo{r,r_0};\quad \mu_F\pthb{B(x,r)} \leq r^{\dimH F - \eps}.
\end{align}
Note that even though the previous assumption is stronger that the celebrated Frostman's lemma, it remains a mild assumption satisfied by a large class of fractal sets (and in particular sets with a finite Hausdorff measure for a given gauge function). Under this condition on the fractal sets considered, we can then obtained a strong uniform statement.
\begin{theorem}  \label{th:sbm_spectrum2}
  Suppose $\gamma\in\ivoo{1,2}$, $\mu\in M_f(\R^d)$ and under $\Pr_\mu$, $X_t(\dt x)$ is a stable super-Brownian motion starting from $\mu$.
  \begin{enumerate}[(a)]
    \item Assuming $d\geq \tfrac{2\gamma}{\gamma-1}$, $\Pr_\mu$-a.s., for any Borel set $F\subset\ivoo{0,\zeta}$ satisfying the \emph{strong Frostman's condition} \eqref{eq:strong_frostman} and such that $\dimH F=\dimP F$, we get:
    \begin{align}
      \forall h\in\ivffb{\tfrac{2}{\gamma}, \tfrac{2}{\gamma-1}};\quad \dimH \bigcup_{s\in F} E(h,X_s) = \gamma h - 2 + 2\dimH F;
    \end{align}

    \item Assuming $d\geq 2$, for any Borel set $F\subset\ivoo{0,\infty}$, $\Pr_\mu$-a.s., the singularities of smallest order satisfy:
    \begin{align}
      \inf_{x\in\R^d} \inf_{s\in F} \alpha_{X_s}(x) = \frac{2-2\dimP F\cap\ivoo{0,\zeta}}{\gamma}\quad\text{on the event }F\cap\ivoo{0,\zeta}\neq\vset.
    \end{align}
  \end{enumerate}
\end{theorem}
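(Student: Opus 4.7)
The approach is to transfer both statements to corresponding assertions on the $\gamma$-stable Lévy tree $(\Ti,d_\Ti,\rho)$ via the snake representation $X_s=V_*(\ell_s)$, where $(\ell_s)_{s\geq 0}$ is the level-set disintegration of the branching measure of $\Ti$ and $V:\Ti\to\R^d$ is the tree-indexed Brownian motion. Since $V$ is a.s.\ $\eta$-Hölder on $\Ti$ for every $\eta<\tfrac12$, one has the one-sided bound $V(B_\Ti(v,\rho))\subset B(V(v),C\rho^{1/2-\eps})$; conversely, in the transient regime $d\geq 2\gamma/(\gamma-1)$ (resp.\ $d\geq 2$) a standard second-moment argument on the Brownian snake shows that $V$ is essentially injective on any sub-tree of tree-dimension at most $d/2$ (resp.\ on each fixed level-set) in the sense that $V^{-1}(B(V(v),r))\cap\Ti(s)\subset B_\Ti(v,r^{2+\eps})$ for a.e.\ vertex with respect to a Frostman measure supported in the relevant set. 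Combining the two inclusions yields the doubling correspondence $\alpha_{X_s}(V(v))=2\alpha_{\ell_s}(v)$, reducing both parts to tree-level statements accessible by the methods of~\cite{Balanca-2015b}.

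For part~(a), Theorem~2 of~\cite{Balanca-2015b} provides, under the strong Frostman hypothesis on $F$ and $\dimH F=\dimP F$, the tree-level identity
\begin{align*}
  \dimH\bigcup_{s\in F}E^{\Ti}(h_\ell,\ell_s) \;=\; \gamma h_\ell-1+\dimH F,\qquad h_\ell\in\ivff{1/\gamma,1/(\gamma-1)},
\end{align*}
where $E^{\Ti}(h_\ell,\ell_s)$ is the set of $v\in\Ti(s)$ with local tree-exponent $h_\ell$. The upper bound of the announced identity is then obtained by covering this tree set by tree-balls and pushing the cover through $V$: radii are squared up to an $\eps$-error, so dimensions double, giving $\leq \gamma h-2+2\dimH F$ with $h=2h_\ell$. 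For the lower bound I apply the mass distribution principle: take a Frostman measure $\nu$ on the tree singularity set of exponent $\gamma h_\ell-1+\dimH F-\eps$, push it to $V_*\nu$ on the spatial singularity set, and establish a second-moment estimate on the snake self-intersections that yields $V_*\nu(B(x,r))\leq Cr^{2(\gamma h_\ell-1+\dimH F)-\eps}$ uniformly on $\supp V_*\nu$; this is exactly where the threshold $d\geq 2\gamma/(\gamma-1)$ enters.

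For part~(b), after the spatial doubling the claim reduces to the tree infimum $\inf_{s\in F,\,v\in\Ti(s)}\alpha_{\ell_s}(v)=(1-\dimP F\cap\ivoo{0,\zeta})/\gamma$, and I would deduce it directly from Theorem~\ref{th:sbm_spectrum1}(b) by a classical fractal intersection argument. The random set $T_h\eqdef\brc{s>0:E(h,X_s)\neq\vset}$ has Hausdorff dimension $\gamma h/2$ by~\eqref{eq:sbm_spectrum-times}; on the one hand, for $h>(2-2\dimP F)/\gamma$, a codimension argument based on a uniform lower bound on covering numbers of $T_h$ (which naturally involves $\dimP F$ rather than $\dimH F$) gives $T_h\cap F\neq\vset$ a.s.\ on $F\cap\ivoo{0,\zeta}\neq\vset$; on the other hand, for $h<(2-2\dimP F)/\gamma$, a Chebyshev estimate on the expected $\beta$-Hausdorff mass of $T_h\cap F$ with $\beta>\gamma h/2$ forces $T_h\cap F=\vset$ a.s. Combining yields the announced infimum, and the condition $d\geq 2$ is used only to ensure that the target exponent $(2-2\dimP F)/\gamma\leq 2/\gamma$ lies within the admissible range $\ivfo{0,d}$ for the pointwise exponent~\eqref{eq:def_pointwise_exponent}.

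\textbf{Main obstacle.} The chief difficulty is the lower bound in part~(a): establishing a uniform Frostman estimate on $V_*\nu$ that matches exactly twice the tree exponent. This amounts to controlling the snake self-intersections on $\supp\nu$ simultaneously at all scales, and the condition $d\geq 2\gamma/(\gamma-1)$ is precisely the critical dimension where the expected number of returns of the snake to a small ball, integrated against $\nu$, remains summable. A secondary challenge is the uniformity in $F$, which one handles as in~\cite{Balanca-2015b} by an exhaustion over a countable dense family of strongly Frostman subsets; the strong Frostman hypothesis is exactly what is needed so that the tree-level Frostman measure can be transported by $V$ without dimensional loss.
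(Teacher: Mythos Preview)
Your overall strategy for part~(a) --- transfer to the tree via the snake, push a tree Frostman measure forward by $V$, and prove a mass distribution principle on the pushforward through a second-moment (local nondeterminism) estimate on snake self-intersections --- is exactly the paper's route (Lemmas~\ref{lemma:sbm_clusters_lb2} and~\ref{lemma:sbm_mass_principle_lb2} for the lower bound). Two of your reductions, however, are too strong as stated and hide the actual work.

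First, the ``essential injectivity'' $V^{-1}(B(V(v),r))\cap\Ti(s)\subset \Bi(v,r^{2+\eps})$ fails even $\nu$-a.e.: pieces of $\Ti(s)$ far from $v$ in the tree metric \emph{do} map into $B(V(v),r)$. What the paper controls (Lemmas~\ref{lemma:sbm_clusters_ub}--\ref{lemma:sbm_cluster_mass}, via the LND Lemma~\ref{lemma:sbm_lnd}) is only the \emph{number} and \emph{local-time mass} of such pieces, stratified by their own tree-local-time scale. That suffices to bound $\Xii_s(B(V(v),r))$, but not to place the preimage in a single tree-ball; the pointwise identity $\alpha_{X_s}(V(v))=2\alpha_{\ell_s}(v)$ is never established and is not needed --- only the one-sided inequality of Lemma~\ref{lemma:sbm_bound_holder} together with separate dimension upper bounds.

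Second, and relatedly, your upper bound for~(a) by ``pushing the tree cover through $V$'' goes in the wrong direction: the inclusion is $V\bigl(F(h_\ell,\ell_s)\bigr)\subset F(2h_\ell,\Xii_s)$, not the reverse, so a cover of the tree singularity set covers only its image, not $E(h,X_s)$. The paper's upper bound (Lemma~\ref{lemma:sbm_spectrum_ub}) instead builds a direct cover of $F(2h,\Xii_t)$ in $\R^d$: via the LND estimate of Lemma~\ref{lemma:sbm_cluster_mass} it shows that any $W_{\sigma_0}$ outside a specific union of $V$-images of tree-balls already has $\Xii_t$-exponent at least $2h$, and then counts those tree-balls using Lemma~\ref{lemma:unif_bound_Th_cor}.

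For part~(b), your intersection argument between $T_h=\brc{t:E(h,X_t)\neq\vset}$ and $F$ is a genuinely different route from the paper and carries a real gap. Knowing only $\dimH T_h=\gamma h/2$ from Theorem~\ref{th:sbm_spectrum1}(b) is not enough: a zero $\beta$-Hausdorff mass of $T_h\cap F$ (your Chebyshev step) does not force $T_h\cap F=\vset$, and the hitting direction would require a capacity or hitting-probability input that dimension alone does not provide. The paper avoids this entirely: once the cover~\eqref{eq:cover_Gth} is available, both inequalities reduce directly (for the fixed $G=F$) to tree-level results, namely \cite[Lemma~4.6]{Balanca-2015b} (emptiness of $F(h,\ell)\cap\Ti(G)$ when $h<(1-\dimP G)/\gamma$) and \cite[Th.~5]{Balanca-2015b} combined with Lemma~\ref{lemma:sbm_bound_holder}.
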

We may note a major difference between the two previous results \emph{(a)} and \emph{(b)}: the first one is uniform in the set $F$ (a.s. for all sets) whereas the second one is not. The weaker form of the latter is natural since the largest masses only appear at exceptional levels. This typical behaviour exists as well in the case of continuous stable trees \cite[Th. 5]{Balanca-2015b}.
\citet{Moerters-2001} has also obtained a similar result by studying the speed of the fastest particles on super-Brownian motion.\vsp

In addition,
as a corollary of Theorems~\ref{th:sbm_spectrum1} and \ref{th:sbm_spectrum2}, we obtain a uniform characterisation of the fractal dimension of the support of stable super-Brownian motion, extending classic results of \citet{Delmas-1999} and \citet{Tribe-1989,Serlet-1995} on the subject.
\begin{theorem}  \label{th:support_dimension}
  Suppose $\gamma\in\ivoo{1,2}$, $d\geq \tfrac{2}{\gamma-1}$ and $\mu\in M_f(\R^d)$. Then, $\Pr_\mu$-a.s.,
  \begin{align}
    \forall t\in\ivoo{0,\zeta};\quad \dimH \pth{\supp X_t} = \dimP \pth{\supp X_t} = \frac{2}{\gamma-1}.
  \end{align}
  Moreover, whenever $d\geq\tfrac{2\gamma}{\gamma-1}$, $\Pr_\mu$-a.s.
  \begin{align}
    \text{for any closed set } F\subset\ivoo{0,\zeta};\quad \dimH \overline{\bigcup_{t\in F} \supp X_t } = \frac{2}{\gamma-1} + 2\dimH F.
  \end{align}
\end{theorem}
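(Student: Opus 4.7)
The plan is to derive Theorem~\ref{th:support_dimension} as a corollary of Theorems~\ref{th:sbm_spectrum1} and~\ref{th:sbm_spectrum2}, combined with the uniform Hausdorff estimate of Delmas recalled in~\eqref{eq:sbm_support_fixt_ext}.

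For the lower bound in~(a), I apply Theorem~\ref{th:sbm_spectrum1}(a) to the open set $V=\R^d$, which carries positive mass on the event $\{t<\zeta\}$: for every $h\in\ivff{\tfrac{2}{\gamma},\tfrac{2}{\gamma-1}}\cap\ivfo{0,d}$ it gives $\dimH E(h,X_t)=\gamma h-2$, and by definition $E(h,X_t)\subseteq\supp X_t$. Letting $h$ tend to $\tfrac{2}{\gamma-1}$ yields $\dimH\supp X_t\geq\tfrac{2}{\gamma-1}$, and hence the same lower bound for $\dimP$, uniformly for all $t\in\ivoo{0,\zeta}$.

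The matching upper bound $\dimP\supp X_t\leq\tfrac{2}{\gamma-1}$ uniformly in $t$ is the main obstacle, since the multifractal spectra control only the lower local dimension $\alpha_{X_t}$, whereas the packing dimension of $\supp X_t$ is governed by the upper local dimension. I plan to import this bound through the Brownian snake / stable Lévy tree representation of $X$: the support $\supp X_t$ is the spatial image of a level slice of the height process of the underlying $\gamma$-stable tree, and \cite{Balanca-2015b} provides uniform packing-dimension estimates for such slices. A Hölder modulus of continuity for the snake map then transfers this bound to $\supp X_t$, uniformly in $t$. The corresponding Hausdorff upper bound follows uniformly from~\eqref{eq:sbm_support_fixt_ext} applied to the singleton $F=\{t\}$.

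For part~(b), the upper bound is immediate from~\eqref{eq:sbm_support_fixt_ext}: under the assumptions $d\geq\tfrac{2\gamma}{\gamma-1}$ and $\dimH F\leq 1$, the minimum $d\wedge\pthb{\tfrac{2}{\gamma-1}+2\dimH F}$ simplifies to $\tfrac{2}{\gamma-1}+2\dimH F$. For the lower bound, I reduce to Theorem~\ref{th:sbm_spectrum2}(a) via a subset extraction: given $\eps>0$, standard fractal-geometric arguments produce a compact $F_\eps\subseteq F$ with $\dimH F_\eps\geq\dimH F-\eps$, $\dimH F_\eps=\dimP F_\eps$, and satisfying the strong Frostman condition~\eqref{eq:strong_frostman} (first extract a compact subset of $F$ of positive finite Hausdorff measure in dimension $\dimH F-\eps$, then regularise further to equalise the Hausdorff and packing dimensions). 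Applying Theorem~\ref{th:sbm_spectrum2}(a) to $F_\eps$ with $h=\tfrac{2}{\gamma-1}$ yields
\[
  \dimH \bigcup_{s\in F_\eps} E\pthb{\tfrac{2}{\gamma-1},X_s} \;=\; \tfrac{2}{\gamma-1}+2\dimH F_\eps \;\geq\; \tfrac{2}{\gamma-1}+2(\dimH F-\eps),
\]
and since this union is contained in $\overline{\bigcup_{t\in F}\supp X_t}$, letting $\eps\to 0$ completes the lower bound.
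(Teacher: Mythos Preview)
Your overall strategy is reasonable, but there are two quantifier-level gaps.

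\textbf{Uniformity and the Delmas bound.} The result~\eqref{eq:sbm_support_fixt_ext} is stated for a \emph{fixed} closed set $F$: the null set depends on $F$. Theorem~\ref{th:support_dimension}, by contrast, asserts a single $\Pr_\mu$-null set outside of which the conclusion holds simultaneously for \emph{all} $t$ (part (a)) and \emph{all} closed $F\subset\ivoo{0,\zeta}$ (part (b)). So invoking~\eqref{eq:sbm_support_fixt_ext} with singletons $\{t\}$ does not give a uniform Hausdorff upper bound in (a), and invoking it for a given $F$ does not give the uniform upper bound in (b). For (a) this is harmless, since your packing upper bound via the snake representation and \cite[Lemma~4.4]{Balanca-2015b} automatically dominates $\dimH$; but for (b) it is a genuine gap. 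The paper obtains the uniform upper bound in (b) exactly as you plan for the packing bound in (a): push the uniform tree-level covers of \cite[Lemma~4.4]{Balanca-2015b} through the $\tfrac{1}{2}$-H\"older snake.

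\textbf{The subset extraction for the lower bound in (b).} Extracting a compact $F_\eps\subset F$ with $0<\Hi^{\dimH F-\eps}(F_\eps)<\infty$ is standard, and the restricted Hausdorff measure gives you strong Frostman. But forcing in addition $\dimH F_\eps=\dimP F_\eps$ is \emph{not} a routine regularisation step: a set of positive finite $\alpha$-Hausdorff measure can have packing dimension strictly larger than $\alpha$, and ``regularise further'' hides a nontrivial construction. The paper avoids this entirely: it observes that the lower-bound half of Theorem~\ref{th:sbm_spectrum2}(a) (namely Lemma~\ref{lemma:sbm_mass_principle_lb2} plus the mass distribution principle) requires only the strong Frostman condition on $\mu_F$, not the equality $\dimH F=\dimP F$, and then takes $\mu_F=\Hi^\alpha|_{F_\star}$ directly. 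If you want to stay at the level of Theorem~\ref{th:sbm_spectrum2}(a) as a black box, you must either justify the extraction of a subset with equal Hausdorff and packing dimensions, or note that only the lower-bound inequality of that theorem is needed and that its proof uses only strong Frostman.
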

\vsp

The rest of the paper is organised as follows: in Section~\ref{sec:notations}, we introduce
the main notations and tools, presenting in particular the \emph{Lévy snake} approach to the construction of
superprocesses and a \emph{local nondeterminism property} adapted to this setting. The proofs of the main
result restricted to excursion measures is presented in Section~\ref{sec:stable_sbm_high_dim}, dividing the former
in upper and lower bound estimates. Section~\ref{sec:spectrum_sbm} gathers the proofs of Theorems \ref{th:sbm_spectrum1}, \ref{th:sbm_spectrum2} and \ref{th:support_dimension} on stable super-Brownian motion. Finally, in Appendix~\ref{sec:appendix} is proved a technical lemma on stable trees which is a consequence of fine properties obtained in \cite{Balanca-2015b}.

\section{Notations and preliminary properties} \label{sec:notations}

We aim to study the multifractal structure of stable super-Brownian motion using an approach different from the work of \citet{Mytnik.Wachtel-2015}. The latter is based on the martingale representation of superprocesses whereas this article uses an alternative construction of the former based on the \emph{Brownian snake} (or \emph{Lévy snake} in its most general setting). Briefly, this approach has been initially introduced in the work of \citet{LeGall.LeJan-1998a} and consists in first constructing the continuous trees encoding the genealogy of a CSBP, and then, conditionally on a realization of a tree, introducing
the motion of particles (usually Brownian motion) as a stochastic process indexed by that continuous tree. As observed by \citet{Duquesne.LeGall-2005}, the latter construction allows to lift more easily the fractal geometry of continuous Lévy trees to super-processes. In our specific case, we therefore rely on our previous study \cite{Balanca-2015b} of the multifractal structure of stable trees to investigate the singularities of stable super-Brownian motion.

\subsection{Introduction to continuous stable trees and the Brownian snake}

We will start by recalling several notations and important results concerning continuous stable trees and the Brownian snake. Most of the material presented in the beginning of this section is available in much more details in the seminal articles \cite{LeGall.LeJan-1998a,LeGall-1999,Duquesne.LeGall-2002,Duquesne.LeGall-2005}.\vsp

\noindent\textbf{Continuous random stable trees.} As presented by \citet{Duquesne.LeGall-2002,Duquesne.LeGall-2005}, Lévy trees
are encoded by excursions of a continuous non-negative process $(H_u)_{u\geq 0}$ named the \emph{height process}, $H$ being simply
a reflected Brownian motion whenever $\gamma=2$. In the case that is considered in this paper $H$ arises as a certain
functional of spectrally positive stable process with exponent $\gamma\in (1,2)$. In what follows we consider just this particular case.

We denote by $\mathbf{N}^H(\dt H)$ the \emph{excursion measure}
constructed by \citet{Duquesne.LeGall-2002}, i.e. meaning that $\mathbf{N}^H$ is a measure on $C(\R_+,\R_+)$ (non-negative continuous functions)
such that $H(0)=0$ and $H(u)=0$ for every $u\geq \Li_H$ where $\Li_H\eqdef\sup\brc{u\geq 0 : H(u)>0}<\infty$ denotes the lifetime of an excursion.
We denote by $(\Lambda^a_t\,, t\geq 0)$ the local time of $H$ at level $a\geq 0$, It is well known (see e.g. Section 3.2 in  \citet{Duquesne.LeGall-2005}) that by
under excursion measure $\mathbf{N}^H$, the law of the total local time $\Lambda^a_{\Li_H}$ of excursion $H$ is characterized as follows:
\begin{align}
\label{eq:4_21_1}
   \mathbf{N}^H\pthb{1 - \e^{-\lambda\Lambda^a_{\Li_H}}} = \pthb{(\gamma-1)a + \lambda^{1-\gamma} }^{-\tfrac{1}{\gamma-1}}=: u_a(\lambda),\quad \forall \lambda\ge 0.
\end{align}
Originally described as a Ray--Knight theorem, this result connects the law of local time of
the height process $H$ under $\mathbf{N}^H$,
to the Laplace transform of continuous state branching processes (CSBPs) and informally states that the total mass of the local time idexed by levels
$a\mapsto \Lambda^a_{\Li_H}$
is a CSBP starting from a single individual.

Under $\mathbf{N}^H(\dt H)$, the excursion $(H_u)_{0\leq u\leq\Li_H}$ is the depth-first exploration process of a rooted \R-tree which is defined as a quotient metric space. To define it rigorously  we introduce the equivalence relation $u\sim_H v$ if and only if $d_H(u,v) = 0$, where $d_H$ denotes the following pseudo-distance:
\begin{align*}
  d_H\pthb{ u,v } = H_u + H_v - 2\min_{u\wedge v\leq w\leq u\vee v} H_w.
\end{align*}
Stable trees are then defined as a quotient metric space: $(\Ti,d) \eqdef \pthb{\ivff{0,\Li_H}/\sim_H, d_H}$. As presented in \cite[Th. 2.1]{Duquesne.LeGall-2005}, $(\Ti,d)$ is an $\R$-tree, i.e. a metric space such that for every $\sigma,\sigma'\in\Ti$
\begin{enumerate}[\it(i)]
  \item There is a unique isometry $f_{\sigma,\sigma'}$ from $\ivffb{0,d(\sigma,\sigma')}$ into $\Ti$ such $f_{\sigma,\sigma'}(0)=\sigma$ and $f_{\sigma,\sigma'}(d(\sigma,\sigma'))=\sigma'$. We set $\iivff{\sigma,\sigma'}=f_{\sigma,\sigma'}\pthb{\ivffb{0,d(\sigma,\sigma')}}$, that is the geodesic joining $\sigma$ to $\sigma'$;\vsp
  \item If $g:\ivff{0,1}\rightarrow\Ti$ is continuous injective, then $g([0,1]) = \iivff{g(0),g(1)}$.
\end{enumerate}
We refer to \cite{Dress.Moulton.ea-1996,Evans.Pitman.ea-2006,Evans-2010} for a more detailed overview on the topic of (random) \R-trees.

To summarize,  the stable tree is the tree $(\Ti,d)$ coded by function $H$ under excursion measure $\mathbf{N}^H$.
We denote by $\Nb(d\Ti)$  the measure on the set of equivalence classes of rooted compact \R-trees, such that $\Nb$
gives the law of the stable tree $(\Ti,d)$ under the measure $\mathbf{N}^H$.
Finally, for every time $t>0$, we denote by $\Ti(t)=\brc{\sigma\in\Ti : d(\rho,\sigma)=t}$ the level set of generation $t$. Note that for the sake of readability, we will denote by $\Bi(\sigma,r)$ the open balls in an $\R$-tree metric space $(\Ti,d)$, and use on the contrary the usual notation $B(x,r)$ when referring to balls in $\R^d$.\vsp

As presented in \cite{Duquesne.LeGall-2005}, one can construct a local time $\ell^t(\dt\sigma)$ carried by the level set $\Ti(t)$ and which, informally, represents the mass distribution of the population at time $t$.
As it is shown in the proof of Theorem~4.2 in~\cite{Duquesne.LeGall-2005}, the measure $\ell^t(\dt\sigma)$ is the
image of the measure $\Lambda^t(\dt s)$ under certain mapping. In particular, one can immediately see from that definition, that the total masses of $\ell^t$ and $\Lambda^t$ are equal, that is,
\begin{align}
\label{eq:4_21_2}  \bk{\ell^t,\indi}=\Lambda^t_{\Li_H}.
\end{align}
In what follows we use the following notation for the total mass of the local time $\ell^t$:
$$ \bk{\ell^t}\eqdef\bk{\ell^t,\indi}.$$
\eqref{eq:4_21_2} and \eqref{eq:4_21_1} immediately
give the law of the total mass of the local time $\bk{\ell^t}$. Namely,
\begin{align}
 \Nb\pthb{1 - \e^{-\lambda \bk{\ell^t}}} =  N\pthb{1 - \e^{-\lambda\Lambda^t_{\Li_H}}} =
 u_t(\lambda)= \pthb{(\gamma-1)t + \lambda^{1-\gamma} }^{-\tfrac{1}{\gamma-1}},\quad\lambda\ge 0.
\end{align}

$\Nb(\dt\Ti)$-a.e., the local time $t\mapsto\ell^t(\dt\sigma)$ is càdlàg for the weak topology and $\bk{\ell^t} > 0$ if and only if $h(\Ti)>t$, where $h(\Ti)$ denotes the total height of the tree: $$h(\Ti) = \sup\brc{d(\rho(\Ti),\sigma) : \sigma\in\Ti}.$$ The measure $N$ of the event
 $\{h(\Ti)>t\}$ is explicitly given by
\begin{align}
  \Nb\pth{ \bk{\ell^t} > 0 } = \pthb{(\gamma-1)t}^{-\tfrac{1}{\gamma-1}} \eqdef v(t),\quad\forall t\in\ivoo{0,\infty}.
\end{align}
We then denote by $\Nb_t(\dt\Ti)$ the conditional probability measure $\Nb\pthc{\dt\Ti}{\bk{\ell^t} > 0}$. In addition, we will also designate by $\sigma_\zeta$ the extinction node of $\Ti$: $d(\rho,\sigma_\zeta) = h(\Ti)$. We refer to \cite[Th. 4.4]{Duquesne.LeGall-2005} for the proof of its uniqueness.

For any $\sigma,\sigma'\in\Ti$, $\iivff{\sigma,\sigma'}$ stands for the unique geodesic between $\sigma$ and $\sigma'$. The subtree $\Ti_\sigma$ stemming from $\sigma\in\Ti$ is then defined as following:
\begin{align*}
  \forall \sigma\in\Ti;\quad \Ti_\sigma = \brcb{ \sigma'\in\Ti : \sigma\in\iivff{\rho(\Ti),\sigma'} }.
\end{align*}
For all $t,\delta\in\ivoo{0,\infty}$, we also introduce the subset $\Ti(t,\delta) = \brcb{\sigma\in\Ti(t) : h(\Ti_\sigma) > \delta}\subset\Ti(t)$. Since $\Ti$ is a compact space, $\Ti(t,\delta)$ is a finite subset of $\Ti(t)$. Moreover, we denote by $Z(t,\delta) \eqdef \# \Ti(t,\delta)$ its cardinal and  by $\Tbb(t,\delta)$ the collection of subtrees rooted at level $t$ and higher than $\delta$:
\begin{align*}
  \Tbb(t,\delta) = \brc{ \Ti_\sigma : \sigma \in \Ti(t,\delta)}\subset \Tbb
\end{align*}
where $\Tbb$ stands for the set of all equivalence classes of rooted compact
\R-trees (two rooted \R-trees are called equivalent if there is a root-preserving isometry mapping the two). We also designate by $\tr(t)$ the truncated tree above $t$: $\tr(t) = \brcb{\sigma\in\Ti : d(\rho(\Ti),\sigma) \leq t}$. Note that the limiting case of $\delta=0$ is simply defined by: $\Ti(t,0)=\cup_{\delta>0} \Ti(t,\delta)$ and $\Tbb(t,0)=\cup_{\delta>0} \Tbb(t,\delta)$.\vsp

\noindent\emph{Branching property.} One important feature of Lévy trees is the branching property presented by \citet{Duquesne.LeGall-2005}. For any $t\in\ivoo{0,\infty}$,
let $\Gi_t$ be the $\sigma$-field generated by $\tr(t)$ and $\Ni_t$ be the following point measure
\begin{align}
  \Ni_t(\dt\sigma'\dt\Ti') = \sum_{\sigma\in\Ti(t,0)} \delta_{(\sigma,\Ti_\sigma)}.
\end{align}
The branching property then states that under $\Nb_t\pthc{\dt\Ti}{\Gi_t}$, $\Ni_t$ is a Poisson point process on $\Ti(t)\times\Tbb$ with intensity $\ell^t(\dt\sigma')\Nb(\dt\Ti')$. Note that \citet{Weill-2007} has conversely proved that the branching properly entirely characterised the law of Lévy trees.\vsp

\noindent\textbf{Brownian snake.} Given a compact rooted $\R$-tree $\Ti$ representing the genealogy of individuals, we may now introduce the motion of the \emph{Brownian snake}. For that purpose, for any $x\in\R^d$, we define an $\R^d$-valued Gaussian process $\Wi\eqdef(W_\sigma, \sigma\in\Ti)$ indexed by the continuous tree $\Ti$ and whose distribution is characterised by
\begin{align}  \label{eq:cov_W}
  \esp{W_\sigma} = x\quad\text{and}\quad \cov\pthb{W_\sigma,W_{\sigma'}} = d(\rho(\Ti),\sigma \land \sigma') \Id_d
\end{align}
where $\Id_d$ designates the $d$-dimensional identity matrix. We denote by $\Tbb_{\text{sp}}$ the space of spatial trees corresponding to couples of the form $(\Ti,\Wi)$. Given $\Ti$, we denote by $Q_\Ti^x(\dt\Wi)$ the law of the tree-indexed process $(W_\sigma)_{\sigma\in\Ti}$ starting from $x$ and introduce the measure $\N_x(\dt\Ti\dt\Wi)$ on the set of spatial trees as following
\begin{align}
  \N_x(\dt\Ti\dt\Wi) \eqdef \Nb(\dt\Ti)\,Q_\Ti^x(\dt\Wi).
\end{align}
Note that the previous definition is licit since the map $\Ti\mapsto Q_\Ti^x$ is measurable. As presented in \cite{Duquesne.LeGall-2005}, if $\Ti$ follows the law of a stable tree, the process $\Wi$ has $\N_x$-a.e. a continuous modification which is Hölder continuous with exponent $\tfrac{1}{2}-\eps$ for any $\eps>0$. In what follows if a function is H\"older continuous with exponent $\eta$, we for simplicity call it $\eta$ continuous function. \vsp

Nowthe super-Brownian motion with stable branching can
 be constructed from the above spatial tree. First
under $\N_x$ and for every $t>0$, we define the measure $\Xii_t=\Xii_t(\Ti,W)$ on $\R^d$ as follows: for
any bounded measurable non-negative function $\varphi$ on $\R^d$,
\begin{align}  \label{eq:sbm_excursion_measures}
  \bk{\Xii_t,\varphi} = \int_{\Ti(t)} \varphi(W_\sigma) \, \ell^t(\dt\sigma).
\end{align}
Note that the above equality
immediately implies that $\supp \Xii_t \subset W(\Ti(t))$.
 \citet{Duquesne.LeGall-2002} have
proved that certain Poisson sum of such
measures gives a (stable) super-Brownian motion:
\begin{proposition}  \label{prop:sbm_construction}
  Suppose $\mu\in M_f(\R^d)$ and
  \begin{align*}
    \sum_{i\in\Ii} \delta_{\pth{\Ti^i,\Wi^i}}
  \end{align*}
  is a Poisson measure with intensity $\int_{\R^d}\mu(\dt x)\N_x(\dt\Ti\dt\Wi)$. Then, the measure-valued process $(X_t)_{t\geq 0}$ defined by
  \begin{align}  \label{eq:sbm_poisson_construction}
    \forall t>0;\quad X_t(\dt x) = \sum_{i\in\Ii} \Xii_t(\Ti^i,\Wi^i) \quad\text{and}\quad X_0=\mu
  \end{align}
  is a super-Brownian motion starting from $\mu$ with stable branching mechanism.
\end{proposition}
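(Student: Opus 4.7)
The plan is to prove the proposition by verifying that the Laplace functional of $(X_t)_{t\geq 0}$ constructed in \eqref{eq:sbm_poisson_construction} coincides with that of a super-Brownian motion with stable branching, via the log-Laplace equation \eqref{eq:log_laplace}. Fix a bounded non-negative measurable function $\varphi$ on $\R^d$. By the Poisson superposition formula applied to the measure $\sum_{i\in\Ii} \delta_{(\Ti^i,\Wi^i)}$ and the additive functional $(\Ti,\Wi) \mapsto \bk{\Xii_t(\Ti,\Wi),\varphi}$, one obtains
\begin{align*}
  \Esp_\mu\bktb{ \e^{-\bk{X_t,\varphi}} } = \exp\pthbb{ -\int_{\R^d} \N_x\pthb{1 - \e^{-\bk{\Xii_t,\varphi}}} \, \mu(\dt x) }.
\end{align*}
Hence it suffices to show that the function $u(t,x) \eqdef \N_x(1-\e^{-\bk{\Xii_t,\varphi}})$ is the unique mild solution of the log-Laplace PDE \eqref{eq:log_laplace} with parameters $a=0$, $b=1$ and initial condition $u(0,x)=\varphi(x)$.

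To obtain the equation for $u$, I would first condition on the stable tree $\Ti$. Under $Q^x_\Ti$ the tree-indexed Gaussian process $(W_\sigma)_{\sigma\in\Ti}$ has the property that along any geodesic $\iivff{\rho,\sigma}$ it is a standard $d$-dimensional Brownian motion (by \eqref{eq:cov_W}), and for distinct subtrees emanating from a common branching point, the spatial displacements are independent Brownian bridges concatenated at that point. I would then use the branching property of the Lévy tree at a small level $s>0$: under $\Nb$ conditionally on $\Gi_s$, the point measure $\Ni_s$ is Poisson on $\Ti(s)\times\Tbb$ with intensity $\ell^s(\dt\sigma')\Nb(\dt\Ti')$. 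Combining this with the tree-indexed Gaussian structure decomposes $\bk{\Xii_{s+r},\varphi}$, for $r>0$, into a sum over subtrees rooted at level $s$ of independent copies of $\bk{\Xii_r,\varphi}$ where each subtree starts its spatial motion from the Brownian value $W_\sigma$ at the corresponding branching point $\sigma\in\Ti(s)$.

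Applying the Laplace transform of Poisson integrals and using the identity \eqref{eq:4_21_1} for the law of the total local time (which gives the cumulant $\psi(\lambda)=\lambda^\gamma$ when $a=0$, $b=1$), I would derive the integral identity
\begin{align*}
  u(s+r,x) = \Esp^x\bktb{ u(r,B_s) } - \int_0^s \Esp^x\bktb{ u(r+s-w,B_w)^\gamma } \,\dt w,
\end{align*}
where $(B_w)_{w\geq 0}$ is standard Brownian motion started at $x$. Letting $s\to 0$ shows $u$ is a mild solution of $\partial_t u = \Delta u - u^\gamma$. The initial condition $u(0,x)=\varphi(x)$ follows by a direct check using that $\ell^t$ is supported on $\Ti(t)$ which concentrates around $\rho$ as $t\to 0$, together with continuity of the snake.

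The main obstacle is the rigorous passage from the branching decomposition to the PDE, in particular handling the interchange of the Laplace transform with the Poisson integral in the presence of the stable exponent $\gamma\in\ivoo{1,2}$ and ensuring $u(t,\cdot)$ has enough regularity to justify the differentiation. Once the integral equation is in place, uniqueness of mild solutions to \eqref{eq:log_laplace} in the class of bounded non-negative functions (a classical PDE fact exploited throughout the superprocess literature) identifies $(X_t)_{t\geq 0}$ as $\gamma$-stable super-Brownian motion started from $\mu$, since its Laplace functional matches the one characterizing this Markov process.
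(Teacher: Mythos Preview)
The paper does not actually prove this proposition: it is stated as a known result and attributed to \citet{Duquesne.LeGall-2002} (see the sentence immediately preceding the proposition in Section~\ref{sec:notations}). So there is no in-paper proof to compare against.

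That said, your sketch is precisely the strategy used in the literature to establish this identification. The Poisson superposition gives $\Esp_\mu[\e^{-\bk{X_t,\varphi}}]=\exp(-\bk{\mu,u(t,\cdot)})$ with $u(t,x)=\N_x(1-\e^{-\bk{\Xii_t,\varphi}})$, and one then shows $u$ solves the integral form of \eqref{eq:log_laplace}. The ingredients you invoke---the branching property of the L\'evy tree at level $s$, the Markov/Brownian structure of $W$ along geodesics, and the Ray--Knight/CSBP identification \eqref{eq:4_21_1} producing the $\psi(u)=u^\gamma$ nonlinearity---are exactly those in \cite{Duquesne.LeGall-2002} (see their Chapter~4, especially the derivation of the integral equation for $u$ via the snake). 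One technical point worth flagging: in the general L\'evy snake setting the derivation goes through the exploration process and the excursion decomposition of the height process rather than a direct branching-at-level-$s$ argument, which makes the justification of the integral equation cleaner and avoids the regularity issues you mention. But the underlying idea is the same, and your outline would carry through with those details supplied.
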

As pointed out by \citet{Duquesne.LeGall-2005}, the interesting aspect of the previous construction is to directly obtain a proper version of a superprocess, i.e. which is càdlàg with respect to the weak topology on $\ivfo{0,\infty}$. In addition, for every $t>0$, the sum \eqref{eq:sbm_poisson_construction} only presents a finite number of terms, as only finitely many trees satisfy $h(\Ti^i)>t$.

Consequently, in the rest of the article, we will mainly focus on the study of the fine fractal geometry of the
measure  $\Xii_t$ under $\N_x$, and then deduce the multifractal structure of stable super-Brownian motion itself. More specifically, we will prove the following main proposition about properties of
measure $\Xii_t$ under $\N_x$.
\begin{proposition}  \label{prop:spectrum_excursions}
  Suppose $x\in\R^d$. Then, the following statements  hold $\N_x$-a.e.
  \begin{enumerate}[(a)]
    \item Assuming $d\geq 2$, the spectrum of singularities of the
measure $\Xii_t(\dt x)$ is equal
to:
    \begin{align*}
      \forall h\in\ivffb{\tfrac{2}{\gamma},\tfrac{2}{\gamma-1}}\cap\ivfob{0,d};\quad \dimH \,E(h,\Xii_t)\cap V = \gamma h - 2,
    \end{align*}
    for any $t>0$ and open set $V\subset\R^d$ such that $\Xii_t(V)>0$.
    \item Supposing $d\geq \tfrac{2\gamma}{\gamma-1}$, for any Borel set $F\subset\ivoo{0,h(\Ti)}$ satisfying the \emph{strong Frostman's lemma} \eqref{eq:strong_frostman} and such that $\dimH F=\dimP F$, we have:
    \begin{align*}
      \forall h\in\ivffb{\tfrac{2}{\gamma}, \tfrac{2}{\gamma-1}};\quad \dimH \bigcup_{s\in F} E(h,\Xii_s) = \gamma h - 2 + 2\dimH F.
    \end{align*}
     \item For any dimension $d\geq 1$,
    \begin{align*}
      \forall h\in\ivfob{0,\tfrac{2}{\gamma}}\cap\ivfob{0,d};\quad \dimH \brcb{t>0 : E(h,\Xii_t) \neq \vset } = \gamma h,
    \end{align*}
    Moreover, for every $t>0$ and any $h\in\ivfob{0,\tfrac{2}{\gamma}}\cap\ivfob{0,d}$, $E(h,\Xii_t)$ is either empty or has zero Hausdorff dimension.
  \end{enumerate}
\end{proposition}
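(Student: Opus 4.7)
The plan is to lift the multifractal analysis of the level-set local times $\ell^t$ on the stable tree $\Ti$ (developed in \cite{Balanca-2015b}) to the image measure $\Xii_t(\cdot)=\ell^t\bigl(W^{-1}(\cdot)\bigr)$ through the Brownian snake $\Wi$. Two ingredients drive the proof: (i) $W$ is $\N_x$-a.e.\ $(\tfrac12-\eps)$-Hölder in the tree metric, so tree-balls of radius $r^{2+\eps}$ are mapped into spatial balls of radius $r$; (ii) the local nondeterminism property of Section~\ref{sec:notations}, together with the dimension condition $d\geq \tfrac{2}{\gamma-1}$, should preclude distant parts of $\Ti(t)$ from contributing significant $\Xii_t$-mass to a small Euclidean ball. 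Combined, these yield, on a sufficiently large set of ``good'' tree points $\sigma_0\in\Ti(t)$, the pinch
\begin{equation*}
  \ell^t\bigl(\Bi_t(\sigma_0,r^{2+\eps})\bigr)\,\leq\,\Xii_t\bigl(B(W_{\sigma_0},r)\bigr)\,\leq\,\ell^t\bigl(\Bi_t(\sigma_0,r^{2-\eps})\bigr),
\end{equation*}
whence $\alpha_{\Xii_t}(W_{\sigma_0})=2\,\alpha_{\ell^t}(\sigma_0)$ and a corresponding dimension-doubling relation between iso-exponent sets on $\Ti(t)$ and in $\R^d$.

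\textbf{Parts (a) and (b).} The upper bound uses only the left half of the pinch: combined with the known upper bound $\dim E(\alpha,\ell^t)\leq \gamma\alpha - 1$ of \cite{Balanca-2015b}, it yields the inclusion $E(h,\Xii_t)\cap V\subseteq W\bigl(\{\alpha_{\ell^t}\geq h/2-\eps\}\bigr)$ up to a negligible set, and the $(\tfrac12-\eps)$-Hölder property of $W$ then gives $\dim E(h,\Xii_t)\cap V \leq \gamma h - 2$. The lower bound is more delicate: for each $h\in[\tfrac{2}{\gamma},\tfrac{2}{\gamma-1}]$ one extracts from \cite{Balanca-2015b} a Borel subset $F_h\subseteq\Ti(t)$ on which $\alpha_{\ell^t}\equiv h/2$ and $\dim F_h=\gamma h/2-1$, and shows that $W\big|_{F_h}$ is bi-Hölder of exponent $\tfrac{1}{2}$ up to $\eps$; the lower bound $|W_\sigma-W_{\sigma'}|\gtrsim d(\sigma,\sigma')^{1/2+\eps}$ on $F_h$ is where local nondeterminism enters. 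Coupled with the pinch, this transports $F_h$ to a subset of $E(h,\Xii_t)$ of Euclidean dimension $2\dim F_h=\gamma h-2$. The uniform statement (b) follows by replacing $\Ti(t)$ with the subtree $\bigcup_{s\in F}\Ti(s)$ and invoking Theorem~2 of \cite{Balanca-2015b} (whose hypotheses are precisely \eqref{eq:strong_frostman} and $\dimH F=\dimP F$); the stronger dimension threshold $d\geq \tfrac{2\gamma}{\gamma-1}$ is exactly what keeps the bi-Hölder estimate valid uniformly over the whole parametrised carrier.

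\textbf{Part (c).} For $h<\tfrac{2}{\gamma}$ the corresponding tree exponent $h/2<\tfrac{1}{\gamma}$ is strictly below the range of typical local-time exponents, so at a fixed level $t$ the tree can carry only countably many (or no) such singularities; hence $W$ sends them to a zero-dimensional or empty subset of $\R^d$. The temporal spectrum follows from the analogous temporal statement for the height process proved in \cite{Balanca-2015b}: the set of levels $t>0$ at which $\Ti(t)$ carries a local-time singularity of exponent $\beta<\tfrac{1}{\gamma}$ has Hausdorff dimension $\gamma\cdot 2\beta = \gamma h$, and the pinch identifies this set with $\{t>0:E(h,\Xii_t)\neq\vset\}$.

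\textbf{Main obstacle.} The crux of the proof is the bi-Hölder \emph{lower} bound on the fractal carriers $F_h$ (and its uniform analogue in (b)). One has to establish a sufficiently sharp local nondeterminism estimate for the tree-indexed Gaussian snake $\Wi$ that survives when restricted to a subset of $F_h$ of full Hausdorff measure; it is precisely this step that forces the dimension thresholds $d\geq \tfrac{2}{\gamma-1}$ in (a)/(c) and $d\geq \tfrac{2\gamma}{\gamma-1}$ in (b) to appear, since below these thresholds distant branches of the snake can collide and destroy the dimension-doubling mechanism.
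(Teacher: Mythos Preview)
Your high-level strategy---lift the tree spectrum to the spatial spectrum through $\Wi$, with local nondeterminism controlling the ``hard'' direction---is the paper's as well. But several of the mechanisms you propose are either stated backwards or are too strong to be true, and this is where the argument breaks.

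\textbf{The pinch and the upper bound.} The left half of your pinch is Lemma~\ref{lemma:sbm_bound_holder}; it gives $\alpha_{\Xii_t}(W_\sigma)\leq 2\alpha_{\ell^t}(\sigma)$, hence the inclusion $E(h,\Xii_t)\subseteq W(\{\alpha_{\ell^t}\geq h/2-\eps\})$. But that target set contains the full set of \emph{typical} tree points (exponent $\tfrac{1}{\gamma-1}$), so its $W$-image has dimension $\leq\tfrac{2}{\gamma-1}$, not $\gamma h-2$. The sharp upper bound needs the \emph{right} half of the pinch, not the left. More seriously, the right half as you wrote it,
\[
  \Xii_t\bigl(B(W_{\sigma_0},r)\bigr)\ \leq\ \ell^t\bigl(\Bi_t(\sigma_0,r^{2-\eps})\bigr),
\]
asserts that essentially all of $W^{-1}(B(W_{\sigma_0},r))\cap\Ti(t)$ sits in a single tree-ball around $\sigma_0$. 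This is false even in high dimension: distant branches do land in small Euclidean balls. What the paper proves instead (Lemmas~\ref{lemma:sbm_clusters_ub}--\ref{lemma:sbm_cluster_mass}) is an $m$-th moment clustering estimate: using the LND constant $c_{1,m}$ of Lemma~\ref{lemma:sbm_lnd}, one bounds uniformly the \emph{number} of subtrees in each homogeneity class $\Tbb(t,\delta_n,H_p,\kappa)$ whose snake positions fall within $8r_n$ of one another, and then sums over classes $H_p$ to obtain
\[
  \ell^t\bigl(W^{-1}(B(W_{\sigma_0},r_n))\cap \Ti\{t,2\delta_{n\geq},h_\geq,n^2\}\bigr)\ \leq\ c\,\delta_n^{(d/2\wedge h)-\eps}.
\]
This bound on the \emph{total} contribution (not on a single tree-ball) is what yields the covering $F(2h-\eps,\Xii_t)\subset V(t,h)$ and hence the correct upper bound in Lemma~\ref{lemma:sbm_spectrum_ub}. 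Note this step needs only $d\geq 2$, not $d\geq\tfrac{2}{\gamma-1}$ as you wrote.

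\textbf{The bi-H\"older lower bound.} Your proposal to exhibit a subset $F_h\subset\Ti(t)$ on which $\abs{W_\sigma-W_{\sigma'}}\gtrsim d(\sigma,\sigma')^{1/2+\eps}$ holds for all pairs cannot work: tree-indexed Brownian motion has no such almost-sure pointwise lower bound on any set carrying positive $\mu_{t,h}$-mass (already for linear Brownian motion there is no set of positive Lebesgue measure with this property). The paper avoids this entirely. It takes the Cantor-like measures $\mu_{t,h}$ on $G(t,h)\subset\Ti(t)$ from \cite{Balanca-2015b}, pushes them forward to $\nu_{t,2h}=W_\star\mu_{t,h}$, and proves a \emph{mass distribution principle} for $\nu_{t,2h}$ directly (Lemma~\ref{lemma:sbm_mass_principle_lb}): another moment-clustering lemma (Lemma~\ref{lemma:sbm_clusters_lb1}), applied now to the dyadic skeletons $\Vi_n(u,\delta_k,h)$ of $G(t,h)$ provided by Lemma~\ref{lemma:trees_suitable_dyadics}, bounds $\nu_{t,2h}(B(z,r))\leq c\,r^{d\wedge(2\gamma h-2)-\eps}$. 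Combined with the upper bound already established, this gives $\nu_{t,2h}(E(2h,\Xii_t))>0$ and hence $\dimH E(2h,\Xii_t)\geq 2\gamma h-2$. The uniform version over $F$ (part~(b)) is Lemma~\ref{lemma:sbm_mass_principle_lb2}, where the stronger threshold $d\geq\tfrac{2\gamma}{\gamma-1}$ enters because the clustering sum now also runs over levels. In short: the lower bound goes through measures and Kaufman-type moment estimates, not through a pathwise bi-H\"older inequality.
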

One key tool to study the multifractal structure of
the  measure $\Xii_t$ under $\N_x$ is to introduce the notion of \emph{local nondeterminism} for the Brownian snake.

\subsection{Local nondeterminism on Brownian motion indexed by trees}

\citet{Balanca-2015b} provides a fine description of the multifractal structure of the indexing stable trees. Consequently, to obtain a characterisation of the singularities of stable super-Brownian motion, one has to precisely analyse the behaviour of the tree-indexed Gaussian process $\Wi$. The study of Gaussian processes and, more generally, Gaussian random fields, has been a long existing field of research in probability theory. In particular, much work has been done in the last 20 years to understand the fine geometry of multiparameter multidimensional Gaussian processes such as \emph{fractional Lévy fields} , \emph{fractional Brownian sheets} or the solutions of SPDEs with Gaussian noise \cite{Xiao.Zhang-2002,Ayache.Xiao-2005,Xiao-2006,Xiao-2009a}. The study of these Gaussian fields share some common ground, and more specifically, it has appeared that one key element in the characterisation of the fractal geometry of Gaussian processes lays in the property commonly called \emph{local nondeterminism}. Initially introduced by \citet{Berman-1970} to investigate the local time of Gaussian processes, it has since been successfully used to understand multiple geometric aspects of a large class of Gaussian fields (we refer particularly to the surveys of \citet{Xiao-2006,Xiao-2013} for a deeper review on the subject). As a consequence, it seems quite reasonable in our context to present an analogue of the local non-determinism property on the tree-indexed process $\Wi$.
\begin{lemma}  \label{lemma:sbm_lnd}
  Suppose $\Ti$ is a compact $\R$-tree, $d=1$, $x=0$, $N\geq 1$
  and $\Wi=(W_\sigma)_{\sigma\in\Ti}$ is the Gaussian process defined by \eqref{eq:cov_W}. Then, there exists a constant $c_{1,N}>0$ such that for every $\sigma\in\Ti$ and all $\sigma_1,\dotsc,\sigma_N\in\Ti$,
  \begin{align*}
    \varcb{W_\sigma}{W_{\sigma_0},W_{\sigma_1},\cdots,W_{\sigma_N}} \geq c_{1,N} \min_{0\leq i\leq N} d(\sigma,\sigma_i),
  \end{align*}
  where by convention $\sigma_0$ denotes the root $\rho$ of $\Ti$. In addition, $c_{1,N} \geq (2N!)^{-2}$.
\end{lemma}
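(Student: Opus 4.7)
The plan is to reduce the problem to a computation on the finite $\R$-tree $R\subset\Ti$ spanned by $\{\sigma,\sigma_0,\sigma_1,\dots,\sigma_N\}$, which has at most $2N+1$ edges $e_1,\dots,e_m$ of respective lengths $\ell_k$. By the branching (independent-increments) property of the tree-indexed Brownian motion, the increments of $W$ along distinct edges of $R$ are mutually independent centred Gaussians $X_k\sim\mathcal{N}(0,\ell_k)$, and for each marked point $y\in\{\sigma,\sigma_0,\dots,\sigma_N\}$ one has $W_y=\sum_{k\in\mathcal{E}_y}X_k$, where $\mathcal{E}_y$ indexes the edges of $R$ on the geodesic from $\rho$ to $y$.

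This reduces the conditional variance $V:=\mathrm{Var}(W_\sigma\mid W_{\sigma_0},\dots,W_{\sigma_N})$ to a weighted least-squares minimisation
$$V=\min_{(a_i)\in\R^{N+1}}\,\sum_{k=1}^{m}\ell_k\Bigl(\indi_{\mathcal{E}_\sigma}(k)-\sum_i a_i\,\indi_{\mathcal{E}_{\sigma_i}}(k)\Bigr)^2.$$
The first structural observation concerns the pendant edge of $\sigma$ in $R$. Let $\tau^*$ denote the unique vertex of $R$ adjacent to $\sigma$ and set $\ell^*:=d(\sigma,\tau^*)$. This pendant edge belongs to $\mathcal{E}_\sigma$ but to no $\mathcal{E}_{\sigma_i}$, and its contribution $\ell^*$ to the residual variance is independent of the $(a_i)$. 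Using the Markov property of $W$ along this geodesic segment, one obtains the exact decomposition $V=\ell^*+\mathrm{Var}(W_{\tau^*}\mid W_{\sigma_0},\dots,W_{\sigma_N})$. When $\tau^*$ coincides with some marked point $\sigma_{i^*}$, we get immediately $\ell^*=\min_i d(\sigma,\sigma_i)$, giving the bound with constant $1$.

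The main obstacle is the case in which $\tau^*$ is a strict branch point of $R$ where at least two subtrees still carry marked points: then $\ell^*$ may be arbitrarily small compared with $\min_i d(\sigma,\sigma_i)$, and the residual conditional variance of $W_{\tau^*}$ must absorb most of the bound. I would proceed by iterating the identity above, treating $\tau^*$ as a new query point inside the strictly smaller reduced tree $R':=R\setminus(\tau^*,\sigma]$ (which still contains all the original marked points). Since $R$ has at most $N$ branch points, the recursion terminates in at most $N$ steps. Equivalently, $V$ admits an electrical-network interpretation as the effective resistance between $\sigma$ and the grounded set $\{\sigma_0,\dots,\sigma_N\}$ in the graph $R$ with resistances $\ell_k$; the usual series/parallel composition rules combined with the elementary inequality $\tfrac{\beta W}{\beta+W}\geq\tfrac12\min(\beta,W)$ applied at each of the at most $N$ branching reductions yield the claimed quantitative bound.

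The technical difficulty lies in the careful bookkeeping through the recursion: at every branching one must simultaneously track the multiplicative degradation of the constant and the way in which $\min_i d(\sigma,\sigma_i)$ relates to the corresponding minimum distance at the current level $\tau^*$. Telescoping the factors of $1/(2j)$ lost at each of the $j=1,\dots,N$ iterations produces the explicit estimate $c_{1,N}\geq(2N!)^{-2}$; this bound is clearly far from optimal, as elementary examples (a ``star'' configuration at a single branch point) already show that the true decay of $c_{1,N}$ is only polynomial in $N$, but this crude constant is amply sufficient for the subsequent multifractal analysis.
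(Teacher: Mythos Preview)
Your effective-resistance route is correct and genuinely different from the paper's proof. The paper works directly with $\inf_a \mathrm{Var}\bigl(W_\sigma - \sum_i a_i W_{\sigma_i}\bigr)$: it splits $\{\sigma_i\}$ into descendants $\Ei_+$ and non-descendants $\Ei_-$ of $\sigma$, decouples by independence, and then (according to whether $\sum_{\Ei_+} a_i\geq\tfrac12$) iteratively passes to the common ancestor of the current subset, at every branch point \emph{choosing by pigeonhole the single subtree whose coefficient-sum is still large}. This builds a chain $\sigma=\sigma_0',\sigma_1',\dots,\sigma_p'=\sigma_i$ whose segment lengths telescope to $d(\sigma,\sigma_i)$, with the constant degrading by a bounded factor at each of the $\leq N$ steps. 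Your identification of $V$ with the effective resistance from $\sigma$ to $\{\sigma_i\}$ in $R$ is more conceptual and in fact yields a much better constant: the Dirichlet-principle bound $V^{-1}\leq\sum_i d(\sigma,\sigma_i)^{-1}$ (take the test function $\min_i f_i$, with each $f_i$ harmonic on $R$, $f_i(\sigma)=1$, $f_i(\sigma_i)=0$) already gives $c_{1,N}\geq(N+1)^{-1}$.

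One part of your write-up does not work as stated, however. You propose to iterate the identity $V=\ell^*+\mathrm{Var}(W_{\tau^*}\mid\cdots)$ with $\tau^*$ as new query point in $R'$, but this stalls immediately: in $R'=R\setminus(\tau^*,\sigma]$ the point $\tau^*$ has degree $\geq 2$ (it was a branch point of $R$), so it has no pendant edge and the next peeled length is zero. The recursion you actually need is the \emph{parallel} decomposition at $\tau^*$ into branches $B_j$ carrying marked sets $S_j$, applying the inductive hypothesis inside each $B_j$ (where $\tau^*$ \emph{is} a leaf) and then combining via $(\sum_j W_j^{-1})^{-1}\geq k^{-1}\min_j W_j$. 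This is precisely your ``equivalently, electrical network'' sentence, so the fix is simply to run the series/parallel induction from the start rather than the Markov peeling. Note also that $\sigma$ itself need not be a leaf of $R$ (it may lie on a geodesic between two $\sigma_i$); the effective-resistance formulation covers this case automatically, whereas the pendant-edge decomposition does not.
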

\begin{proof}
  Recall that by the definition of the conditional
expectation  in $L^2$,
  \begin{align*}
     \varcb{W_\sigma}{W_{\sigma_0},W_{\sigma_1},\cdots,W_{\sigma_N}} = \inf_{a\in\R^{N+1}} \varbb{{W_\sigma - \sum_{i=0}^N a_i W_{\sigma_i} }}.
  \end{align*}
  We denote by $\Ti_\sigma$ the subtree rooted in $\sigma$, and define the following two subsets of $\brc{\sigma_0,\sigma_1,\cdots,\sigma_n}$: $\Ei_+ = \brc{\sigma_i : \sigma_i\in\Ti_\sigma}$ and $\Ei_- = \brc{\sigma_i : \sigma_i\notin\Ti_\sigma}$. In addition, for any $\sigma'\preceq\sigma''\in\Ti$, we denote by $W_{\iivff{\sigma',\sigma''}}$ the increment $W_{\sigma''}-W_{\sigma'}$. Let us set $a\in\R^{N+1}$ and observe that
  \begin{align*}
    \varbb{{W_\sigma - \sum_{i=0}^N a_i W_{\sigma_i} }}
    &=\varbb{{W_\sigma - \sum_{\sigma_i\in\Ei_+} a_{i} W_{\sigma_{i}} - \sum_{\sigma_j\in\Ei_-} a_j W_{\sigma_{j}} }} \\
    &= \varbb{{W_\sigma \pthbb{1-\sum_{\sigma_i\in\Ei_+} a_i } -\sum_{\sigma_i\in\Ei_+} a_i \pthb{W_{\sigma_{i}}-W_{\sigma}} - \sum_{\sigma_j\in\Ei_-} a_j W_{\sigma_{j}} }}.
  \end{align*}
  For any $\sigma_i\in\Ei_+$ and $\sigma_j\in\Ei_-$, $\cov\pth{W_{\sigma_{j}},W_{\sigma_{i}}-W_{\sigma}} = 0$ and $\cov\pth{W_{\sigma},W_{\sigma_{i}}-W_{\sigma}} = 0$. Hence, the independence of the Gaussian vectors entails
  \begin{align*}
    \varbb{W_\sigma - \sum_{i=0}^N a_i W_{\sigma_i} }
    = \varbb{W_\sigma \pthbb{1-\sum_{\sigma_i\in\Ei_+} a_i} - \sum_{\sigma_j\in\Ei_-} a_j W_{\sigma_{j}} } + \varbb{ \sum_{\sigma_i\in\Ei_+} a_i \pthb{W_{\sigma_{i}}-W_{\sigma}} }.
  \end{align*}
  Let us first suppose that $\sum_{\sigma_i\in\Ei_+} a_i\geq \tfrac{1}{2}$ and investigate the second term. We denote by $\sigma_1'$ the common ancestor of the nodes in $\Ei_+$. We also introduce a partition $\Ei_1\cup\cdots\cup\Ei_k$ of $\Ei_+$ such that $(\Ei_j)_{1\leq j\leq k}$ are subsets of separate sub-tree rooted at $\sigma_1'$. Note that by definition of $\Ei_+$, $\Ei_j\subset\Ti_{\sigma}$ for every $j\in\brc{1,\dotsc,k}$. Then,
  \begin{align*}
    \varbb{\sum_{\sigma_i\in\Ei_+} a_i \pthb{W_{\sigma_{i}}-W_{\sigma}} }
    &= \varbb{ W_{\iivff{\sigma,\sigma_1'}} \sum_{\sigma_i\in\Ei_+} a_i } + \varbb{\sum_{\sigma_i\in\Ei_+} a_i \pthb{W_{\sigma_{i}}-W_{\sigma_1'}} } \\
    &\geq \frac{1}{4} \,d(\sigma_1',\sigma_0') + \varbb{ \sum_{\sigma_i\in\Ei_+} a_i  \pthb{W_{\sigma_{i}}-W_{\sigma_1'}} },
  \end{align*}
  where by convention $\sigma'_0=\sigma$.
  Due to the independence of the components of $\Wi$ indexed by the distinguished subtrees rooted at $\sigma_1'$, the right hand term satisfies
  \begin{align*}
    \varbb{ \sum_{\sigma_i\in\Ei_+} a_i \pthb{W_{\sigma_{i}}-W_{\sigma_1'}} } = \sum_{j=1}^k \varbb{ \sum_{\sigma_i\in\Ei_j} a_i \pthb{W_{\sigma_{i}}-W_{\sigma_1'}} }.
  \end{align*}
  Since $\sum_{\sigma_i\in\Ei_+} a_i \geq \tfrac{1}{2}$ and $k\leq N$, there exists $j\in\brc{1,\dotsc,k}$ such that $\sum_{\sigma_i\in\Ei_j} a_i \geq \tfrac{1}{2N}$. Then, we may iterate the previous procedure on the collection $\Ei_j$: let us define $\sigma_2'$ as the common ancestor of the nodes $\sigma_i\in\Ei_j$ and observe that
  \begin{align*}
    \varbb{ \sum_{\sigma_i\in\Ei_j} a_{i} \pthb{W_{\sigma_{i}}-W_{\sigma_1'}} }
    &= \varbb{ W_{\iivff{\sigma_1',\sigma_2'}} \sum_{\sigma_i\in\Ei_j} a_{i} }  + \varbb{ \sum_{\sigma_i\in\Ei_j} a_{i} \pthb{W_{\sigma_{i}}-W_{\sigma_2'}} } \\
    &\geq \frac{1}{4N^2} d(\sigma'_1,\sigma'_2) + \varbb{ \sum_{\sigma_i\in\Ei_j} a_{i} \pthb{W_{\sigma_{i}}-W_{\sigma_2'}} }.
  \end{align*}
  Iterating the previous construction, we observe that the latter stop after at most $N$ steps when a node $\sigma_i\in\Ei_+$ is reached. As a consequence, we construct by induction a family of nodes $\sigma_0',\dots,\sigma'_p$ such that $\sigma'_0=\sigma$, $\sigma'_p=\sigma_i$ for some $i$ and
  \begin{align*}
    \varbb{W_\sigma - \sum_{i=0}^N a_i W_{\sigma_i} }
    \geq c_N \sum_{j=0}^{p-1} d(\sigma'_j,\sigma'_{j+1}) = c_N\, d(\sigma,\sigma_i) \geq c_N \min_{0\leq i\leq N} d(\sigma,\sigma_i),
  \end{align*}
  for a constant $c_N>0$. In addition, according to the induction procedure, we get $c_N\geq (2N!)^{-2}$.

  Let us now suppose that $\sum_{\sigma_i\in\Ei_+} a_i < \tfrac{1}{2}$ and study the term $\varb{W_\sigma \pthb{1-\sum_{\sigma_i\in\Ei_+} a_{i}} - \sum_{\sigma_i\in\Ei_-} a_i W_{\sigma_{i}} }$. We denote by $b$ the constant $b\eqdef 1-\sum_{\sigma_i\in\Ei_+} a_i\geq\tfrac{1}{2}$ and proceed in a similar fashion: let $\sigma'_1$ be the highest ancestor of the type $\sigma\land\sigma_i$, where $\sigma_i\in\Ei_-$. The previous definition is licit as $\sigma\land\sigma_i\in\iivff{\rho,\sigma}$ for any $i$. We still denote by $\Ei_1,\dotsc,\Ei_k$ the partition of $\Ei_-$ corresponding to separated subtrees stemming from $\sigma_1'$, and define $\Ei_0 = \Ei_-\setminus\cup_{j=1}^k\Ei_j$ Then,
  \begin{align*}
    &\varbb{bW_\sigma - \sum_{\sigma_i\in\Ei_-} a_i W_{\sigma_{i}} }
    = \varbb{b W_\sigma  - \sum_{j=0}^k \sum_{\sigma_i\in\Ei_j} a_i W_{\sigma_{i}} }.
  \end{align*}
  Using the independence of the different components of $W$, the right hand component is equal to
  \begin{align*}
    \varb{b (W_\sigma-W_{\sigma'_1})}
    &+ \varbb{ W_{\sigma'_1}\pthbb{ b-\sum_{j=1}^k \sum_{\sigma_i\in\Ei_j} a_i }  - \sum_{\sigma_i\in\Ei_0} a_{i} W_{\sigma_{i}} } \\
    &+ \sum_{j=1}^k \varbb{ \sum_{\sigma_i\in\Ei_j} a_{i} (W_{\sigma_{i}}-W_{\sigma'_1}) },
  \end{align*}
  where the first term $\varb{b (W_\sigma-W_{\sigma'_1})}$ is lower bounded by $\tfrac{1}{4}d(\sigma'_0,\sigma'_1)$. Let us now distinguish two different cases. Suppose first that $b_1 \eqdef b-\sum_{j=1}^k\sum_{\sigma_i\in\Ei_j} a_i \geq \tfrac{1}{4}$. Observing that we then obtain the exact same configuration, we simply iterate the previous procedure on the component
  \begin{align*}
    \varbb{ b_1 W_{\sigma'_1}  - \sum_{\sigma_i\in\Ei_0} a_{i} W_{\sigma_{i}} }.
  \end{align*}
  Otherwise, as $b\geq \tfrac{1}{2}$, there exists $j\in\brc{1,\dots,k}$ such that $\sum_{\sigma_i\in\Ei_j} a_{i} \geq \tfrac{1}{4N}$, and we therefore need to lower bound the following term
  \begin{align*}
    \varbb{ W_{\sigma'_1}\sum_{\sigma_i\in\Ei_j} a_{i} - \sum_{\sigma_i\in\Ei_j} a_{i} W_{\sigma_{i}} }.
  \end{align*}
  We then observe that this question is strictly equivalent to the first case studied in this proof, since for any $\Ei_j\subset\Ti_{\sigma'_{1}}$. As a consequence, we can iterate the procedure and obtain as well in both situations a collection of nodes $\sigma_0',\dots,\sigma'_p$ such that $\sigma'_0=\sigma$, $\sigma'_p=\sigma_i$ for some $i$ and
  \begin{align*}
    \varbb{W_\sigma - \sum_{i=0}^N a_i W_{\sigma_i} }
    \geq c_N \sum_{j=0}^{p-1} d(\sigma'_j,\sigma'_{j+1}) = c_N\, d(\sigma,\sigma_i) \geq c_N \min_{0\leq i\leq N} d(\sigma,\sigma_i),
  \end{align*}
  where the constant still satisfies $c_N\geq (2N!)^{-2}$. The latter inequality hence concludes the proof of the lemma.
\end{proof}
\begin{remark}
  We note that in Lemma~\ref{lemma:sbm_lnd}, the constant $c_{1,N}$ appearing in the lower bound may depend on the parameter $N$. In the literature, a Gaussian process is usually said to satisfy a \emph{strong local nondeterminism} property (see \cite{Xiao-2013} on this topic) if the former constant is independent of $N$. Such a property then allows to have stronger estimates on the law of the Gaussian process (small balls, local time, \dots).

  In the setting of this work, one can observe that if $\Ti$ is a continuous stable tree, then $\Wi$ is not strongly locally nondeterministic. Indeed, let $\sigma_0\in\Ti$ be a vertex with infinite multiplicity (see \cite{Duquesne.LeGall-2005} for their existence whenever $\gamma\in\ivoo{1,2}$) and for every $r>0$, $\Ei(r)\subset\Ti$ be a collection of nodes such that for every $\sigma\neq\sigma'\in\Ei(r)$, $d(\sigma_0,\sigma)=d(\sigma_0,\sigma')=r$ and the nodes $\sigma$ and $\sigma'$ belong to separate connected components in $\Ti\setminus\brc{\sigma_0}$. It then follows from the independence properties of $\Wi$ that
  \begin{align*}
    \forall r>0;\quad \varcb{W_{\sigma_0}}{W_{\sigma}:\sigma\in\Ei(r)} \leq  r\cdot\bktb{\#\Ei(r)}^{-1}
  \end{align*}
  Since $\sigma_0$ has infinite multiplicity, $\Ei(r)\rightarrow\infty$ and as a consequence, $\Wi$ can not be strongly locally nondeterministic.

  It remains nevertheless an open question on which class of compact $\R$-trees the constant $c_{1,N}$ is independent of $N$, and in particular if $\Wi$ is strongly locally nondeterministic when $\Ti$ is the \emph{Continuous Random Tree} ($\gamma=2$).
\end{remark}

\subsection{Some tree-collections and a few technical results}

As we aim to adapt Gaussian techniques to our continuous tree formalism, we may observe that the main difference with the classic setting lays in the heterogeneity of the indexing space. Indeed, the Gaussian literature mainly deals with random fields indexed by $\R^N$, or more generally manifolds, which have an homogeneous structure. On the contrary, the multifractal geometry of  stable trees described in \cite{Balanca-2015b} shows that the former are particularly non-homogeneous indexing spaces, with a local dimension varying largely from
one vertex to another. A consequence of this feature is that if one tries to apply directly Gaussian techniques to the process $\Wi$, it will only provide the worse case scenario in terms of regularity. If that strategy is good enough to retrieve some results such that the optimal Hölder regularity of the density presented in \cite{Fleischmann.Mytnik.ea-2010}, it is not sufficient to obtain a full picture of the geometry of the stable super-Brownian motion, and in particular, characterise its multifractal structure. Consequently, in order to circumvent this issue, we will investigate the properties of the process $(W_\sigma)_{\sigma\in\Ti}$ when indexed by subsets of the continuous tree $\Ti$, focusing in particular on those sufficiently homogeneous. More formally, we introduce 
now  several  important classes of such subsets.

\begin{definition}  \label{def:collections_Th1}
  For any $\delta>0$, interval $H\eqdef\ivff{h_0,h_1}\in\R$ and $\kappa\geq 1$, we denote by $\Ti\brc{\delta,H,\kappa}$ the collection:
  \begin{align*}
    \Ti\brc{\delta,H,\kappa} \eqdef \brcb{\sigma\in\Ti : \ell^{a(\sigma)}\pthb{\Bi(\sigma,2\delta)} \in\ivofb{\kappa\delta^{h_1},\kappa\delta^{h_0}} },
  \end{align*}
  where
  $a(\sigma)\eqdef d(\rho,\sigma)$.
  We also define $\Ti\brc{t,\delta,H,\kappa}\eqdef \Ti\brc{\delta,H,\kappa}\cap\Ti(t)$ for every $t>0$. In the cases $H=\ivfo{h,\infty}$ or $H=\ivof{-\infty,h}$, we respectively use the notations:
  \begin{align*}
    \Ti\brc{\delta,h_\geq,\kappa} = \Ti\brcb{\delta,\ivfo{h,\infty},\kappa} \quad\text{and}\quad \Ti\brcb{\delta,h_<,\kappa} = \Ti\brc{\delta,\ivof{-\infty,h},\kappa},
  \end{align*}
  setting as well $\Ti\brc{t,\delta,h_\geq,\kappa}=\Ti\brc{\delta,h_\geq,\kappa}\cap\Ti(t)$ and $\Ti\brc{t,\delta,h_<,\kappa}=\Ti\brc{\delta,h_<,\kappa}\cap\Ti(t)$.

  Finally, we will also make use of the following classes:
  \begin{align*}
    \Ti\brc{\delta_\geq,H,\kappa} \eqdef \bigcap_{r\geq\delta} \Ti\brcb{r,H,\kappa} = \brcb{\sigma\in\Ti : \forall r\geq\delta,\ \ell^{a(\sigma)}\pthb{\Bi(\sigma,2r)} \leq \ivofb{\kappa r^{h_1},\kappa r^{h_0}} },
  \end{align*}
  using as well the analogue notations $\Ti\brc{\delta_\geq,h_\geq,\kappa}$ and $\Ti\brc{\delta_\geq,h_<,\kappa}$ to refer to the cases $H=\ivfo{h,\infty}$ or $H=\ivof{-\infty,h}$.
\end{definition}

In addition to the introduction of the previous collections and subsets of $\Ti$ and $\Ti(t)$, we also require to define some analogue classes of on families of subtrees.
\begin{definition}  \label{def:collections_Th2}
  For every $t>0$, any $\delta>0$, any interval $H\eqdef\ivff{h_0,h_1}\in\R$ and any $\kappa\geq 1$, we define:
  \begin{align*}
    \Tbb\pth{t,\delta,H,\kappa} \eqdef \brcbb{ \Ti_\sigma\in\Tbb(t,\delta) : \sup_{u\in\ivff{\delta,2\delta}} \bk{\ell^u}(\Ti_{\sigma}) \in\ivofb{\kappa\delta^{h_1},\kappa\delta^{h_0}} }.
  \end{align*}
  We will also make use of the following notations: $\Tbb\pth{t,\delta,h_\geq,\kappa} \eqdef \Tbb\pth{t,\delta,\ivfo{h,\infty},\kappa}$ and $\Tbb\pth{t,\delta,h_<,\kappa} \eqdef \Tbb\pth{t,\delta,\ivof{-\infty,h},\kappa}$
\end{definition}
Note that in the previous definition, $\bk{\ell^u}(\Ti_{\sigma})$ refers to the local time at level $u$ in the sub-tree $\Ti_\sigma$.
As pointed out above, Definitions~\ref{def:collections_Th1} and \ref{def:collections_Th2} introduce sub-classes sufficiently homogeneous to study the restriction of process $(W_\sigma)_{\sigma\in\Ti}$ using known Gaussian techniques. Moreover, if one defines the following collections of intervals $\Hbb_n\eqdef\pth{H_p}_{1\leq p\leq n}$:
\begin{align}  \label{eq:intervals_Hp}
  \forall p\in\brc{1,\dotsc,n-1};\quad H_p \eqdef \ivfob{h_{p-1},h_{p}} \quad\text{and}\quad  H_{n} \eqdef \ivfob{h_{n-1},\infty}\text{ where } h_p\eqdef\tfrac{p}{n(\gamma-1)},
\end{align}
we remark that the former sub-classes form a partition of the tree:
\begin{align}  \label{eq:tree_partition}
  \Ti = \bigcup_{p=1}^ n\Ti\brc{\delta,H_p,\kappa}\quad\text{and}\quad \Tbb(t,\delta) = \bigcup_{p=1}^ n\Tbb\pth{t,\delta,H_p,\kappa} ,\quad\text{whenever } \kappa\geq \sup_{u\geq 0}\bk{\ell^u}.
\end{align}


As we will be investigating in the rest of the article the behaviour of the process $\Wi$ restricted to the subsets $\Ti\brc{\delta_\geq,H,\kappa}$ and $\Ti\brc{\delta,H,\kappa}$, we may first present a few properties of the latter. Note that the next two lemmas extend features of stable trees described in \cite[Lemma 4.4]{Balanca-2015b}.

Before stating these results, let us define a few additional notations. For any $x\in\ivoo{0,1}$, we set the function: $g(x) \eqdef \pthb{\log x^{-1} }^{-1}$. Moreover, for any $n\in\N$, $\delta_n\eqdef 2^{-n}$ and $\Di_n$ denotes the collection of standard closed dyadic intervals: $\Di_n \eqdef \brcb{ \ivff{k\delta_n,(k+1)\delta_n} : k\in\Z}$. Finally, in the rest of the article, $b>0$ will denote a deterministic fixed level.

\begin{lemma}  \label{lemma:unif_bound_Th}
  Let us use the notations \eqref{eq:intervals_Hp} introduced above. $\Nb$-a.e. there exists $N\geq 1$ such that for every $n\geq N$, all $t\in\ivoo{0,b}$, every $H_p\in\Hbb_n$, any $\sigma\in\Ti(t)$, $r\geq2\delta_n$ and $\kappa\geq 1$:
  \begin{align*}
    \#\pthb{ \Bi(\sigma,2r) \cap \Tbb\brc{t-\delta_n,\delta_n,H_p,\kappa} } \leq c_0\, \delta_n^{1-\gamma h_p} \pthB{\ell^t\pthb{ \Bi(\sigma,8r)} + g(r)^{-2}r^{\tfrac{1}{\gamma-1}}} + c_0\,\kappa n^2.
  \end{align*}
  where the positive constant $c_0$ only depends on $\gamma$.

  The same result holds on the collection of index intervals $\widehat H_p \eqdef \ivof{-\infty,h_{p}}$, $p\in\brc{1,\dotsc,n-1}$, and $\widehat H_n=\R$.
\end{lemma}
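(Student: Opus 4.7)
The plan is to combine the branching property of stable trees with a Poissonian concentration argument, and then extend the resulting pointwise bound to a uniform statement via dyadic discretization and Borel--Cantelli. I will fix $n \geq 1$, $t \in \ivoo{0,b}$, $\sigma \in \Ti(t)$, $r \geq 2\delta_n$, $\kappa \geq 1$ and $p \in \brc{1,\dots,n}$, and set $s := t - \delta_n$. By the branching property applied at level $s$, conditionally on $\Gi_s$ the point measure $\Ni_s$ is a Poisson point process on $\Ti(s) \times \Tbb$ with intensity $\ell^s(\dt\sigma')\,\Nb(\dt\Ti')$. Since $d(\sigma,\sigma') = 2t - \delta_n - 2a(\sigma\wedge\sigma')$ for $\sigma' \in \Ti(s)$, the quantity
\begin{align*}
  N := \#\pthb{\Bi(\sigma,2r) \cap \Tbb\brc{s,\delta_n,H_p,\kappa}}
\end{align*}
is, up to at most one atom (the one whose subtree contains $\sigma$), a Poisson random variable conditional on $\Gi_s$ with intensity $\mu = \ell^s(B') \cdot q_{n,p}$, where $B' := \brcb{\sigma' \in \Ti(s) : a(\sigma\wedge\sigma') \geq t - \tfrac{\delta_n}{2} - r}$ and $q_{n,p} := \Nb\pthb{\sup_{u\in\ivff{\delta_n,2\delta_n}}\bk{\ell^u} \in \ivof{\kappa\delta_n^{h_p}, \kappa\delta_n^{h_{p-1}}}}$.

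The next step is to estimate the two factors of $\mu$. From the Laplace formula \eqref{eq:4_21_1}, a Markov inequality on $\bk{\ell^u}^\alpha$ taken at $\alpha \uparrow \gamma$ (the critical index for stable CSBPs) gives $\Nb\pth{\bk{\ell^u} > M} \lesssim u M^{-\gamma}$ whenever $M \geq u^{1/(\gamma-1)}$; combined with the Markov property of $u \mapsto \bk{\ell^u}$, this upgrades to $q_{n,p} \lesssim \kappa^{-\gamma}\delta_n^{1-\gamma h_p}$. I then compare $\ell^s(B')$ to $\ell^t(\Bi(\sigma,8r))$: since $\Bi(\sigma,8r) \cap \Ti(t)$ contains every level-$t$ descendant of an ancestor of $\sigma$ above level $t - 4r$, it encompasses all level-$t$ descendants of the subtrees rooted in $B'$, and the critical-branching martingale identity $\espcb{\bk{\ell^t}(\Ti_{\sigma'})}{\tr(s)} = \bk{\ell^s}(\Ti_{\sigma'})$ together with an $L^\gamma$ fluctuation bound over the window of length $\delta_n$ yields
\begin{align*}
  \ell^s(B') \leq c\,\ell^t\pthb{\Bi(\sigma,8r)} + c\,g(r)^{-2} r^{1/(\gamma-1)},
\end{align*}
outside an event of probability at most $e^{-c\kappa n^2}$. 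The correction $g(r)^{-2}r^{1/(\gamma-1)}$ encodes the LIL-type fluctuations of the CSBP at scale $r$. Combining these estimates yields $\mu \lesssim \delta_n^{1-\gamma h_p}\pthb{\ell^t(\Bi(\sigma,8r)) + g(r)^{-2} r^{1/(\gamma-1)}}$.

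It remains to bound the deviation and to upgrade to a uniform statement. A standard Chernoff bound for the Poisson variable $N$ gives $\Nb\pthcb{N \geq 2\mu + c_0 \kappa n^2}{\Gi_s} \leq e^{-c_1 \kappa n^2}$. For uniformity, I discretize at scale $n$: times $t \in \brc{k\delta_n : k \leq b\delta_n^{-1}}$, nodes $\sigma$ ranging over the finite set $\Ti(t,\delta_n)$, radii $r \in \brc{2^{-j} : 0 \leq j \leq n}$, and indices $p \leq n$. Standard first-moment estimates on $Z(t,\delta_n) = \#\Ti(t,\delta_n)$ bound the number of configurations by $\delta_n^{-C}$ for some constant $C$ depending only on $\gamma$ and $b$; this polynomial count is dominated exponentially by $e^{-c_1\kappa n^2}$, so Borel--Cantelli supplies a random $N$ such that the bound holds for all discretized configurations with $n \geq N$. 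Monotonicity in $r$ together with a continuity argument in $\sigma$ extends the bound to arbitrary continuous parameters, with the inflation from $\Bi(\sigma,2r)$ to $\Bi(\sigma,8r)$ absorbing the discretization error. The analogous statement on $\widehat{H}_p = \bigcup_{p'\leq p}H_{p'}$ then follows by summing the bounds for $H_{p'}$, $p' \leq p$: the sum $\sum_{p' \leq p}\delta_n^{1 - \gamma h_{p'}}$ is geometrically dominated by its largest term $\delta_n^{1-\gamma h_p}$, and the cumulated $\kappa n^2$ contribution is absorbed by enlarging $n^2$ to $n^3$ in the constant. The hardest step will be the $L^\gamma$ comparison between $\ell^s(B')$ and $\ell^t(\Bi(\sigma,8r))$: this demands sharp control of CSBP fluctuations over a short window $\delta_n$ and is precisely where the log correction $g(r)^{-2}$ naturally emerges.
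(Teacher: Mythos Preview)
Your high-level architecture---branching property, Poisson concentration, Borel--Cantelli over a discretized parameter set---matches the paper's. But the execution differs in one structural way that exposes two genuine gaps.

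First, the comparison $\ell^s(B') \leq c\,\ell^t(\Bi(\sigma,8r)) + c\,g(r)^{-2}r^{1/(\gamma-1)}$ cannot come from an ``$L^\gamma$ fluctuation bound'': the stable CSBP has no finite $\gamma$-th moment, and in any case you need a \emph{lower} bound on $\ell^t$ given $\ell^s$, i.e.\ control on how fast the CSBP can drop. The correct tool is the exponential estimate $\Pr_x(\inf_{u\leq 2\delta_n} X_u \leq x/2) \leq \exp(-c\,x\,\delta_n^{-1/(\gamma-1)})$ (Lemma~3.4 in \cite{Balanca-2015b}), applied only when $x \geq k^2\delta_k^{1/(\gamma-1)}$ with $\delta_k\approx r$; this yields decay $\exp(-c\,n^{3/2})$, independent of $\kappa$. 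The $\kappa n^2$ term in the final bound arises solely from the Poisson Chernoff step, not from the fluctuation control.

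Second, your discretization runs over $\sigma\in\Ti(t,\delta_n)$, a random set not measurable with respect to $\Gi_s$. Conditioning on $\Gi_s$ after fixing $\sigma$ at level $t>s$ is ill-posed. The paper sidesteps both issues via a two-scale decomposition: it discretizes over \emph{deterministic} pairs $(a,\delta_k)$ with $a\in\delta_n\Z$ and $\delta_k\in[\delta_n,b]$ dyadic, and for each pair defines bad events on the subtrees in $\Tbb(a,\delta_k)$---an event $A_1$ recording that the CSBP halves between levels $\delta_k$ and $\delta_k+2\delta_n$ while starting above $k^2\delta_k^{1/(\gamma-1)}$, and an event $A_2$ recording that the Poisson count of $\delta_n$-subtrees with large local time exceeds $\delta_n^{1-\gamma h_p}\bk{\ell^{\delta_k}}+\kappa n^2$. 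If no subtree at any $(a,\delta_k)$ is bad, then for every $t$, $\sigma$ and $r\approx\delta_k$ the ball $\Bi(\sigma,2r)$ sits inside one good subtree and the bound follows. Your approach can be repaired by passing from $\sigma$ to its ancestor at a level below $s$, but once you do that and localize the fluctuation control per subtree, you have essentially reproduced the paper's scheme.
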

Before proving Lemma~\ref{lemma:unif_bound_Th}, let us note that the informal notation $\Bi(\sigma,2r) \cap \Tbb\brc{t-\delta_n,\delta_n,H_p,\kappa}$ is licit since for any $\Ti_\sigma\in \Tbb\brc{t-\delta_n,\delta_n,H_p,\kappa}$, either $\Ti_\sigma(\delta)\subset \Bi(\sigma,2r)$ or $\Ti_\sigma(\delta)\cap \Bi(\sigma,2r)=\vset$.

Let us also describe in words the meaning of the above lemma. Roughly speaking, for any $n$ suffuciently large the lemma gives a uniform bound on a number of subtrees (in a neighborhood of any node $\sigma$)  "born" at level $t-\delta_n$ that
possess a local time of order $\delta_n^h$. If $h_p<1/\gamma$ then the numder of such trees is finite!

\begin{proof}
  Let us set $n\in\N$, $\kappa\geq 1$, $H_p\in\Hbb_n$, $j\delta_n>0$. We define a slight modification of collections $\Tbb\brc{j\delta_n,\delta_n,H_p,\kappa}$:
  \begin{align*}
    \widehat\Tbb\pth{j\delta_n,\delta_n,h_p,\kappa,\Ti} \eqdef \brcbb{ \Ti_\sigma\in\Tbb(j\delta_n,\delta_n,\Ti) : \sup_{u\in\ivof{\delta_n,4\delta_n}}\bk{\ell^u}(\Ti_{\sigma}) \geq \kappa\delta_n^{h_p} }.
  \end{align*}
  Observe that for any $t\in\ivfo{(j+1)\delta_n, (j+2)\delta_n}$, $\Tbb\brc{t-\delta_n,\delta_n,H_p,\kappa} \subset \widehat\Tbb\pth{j\delta_n,\delta_n,h_p,\kappa}$, and as a consequence, it is sufficient to control uniformly the size of the former collection.
  For that purpose, let us now set $\delta_k\geq\delta_n$ and introduce two families of events:
  \begin{align*}
    A_1(\delta_k, \delta_n) \eqdef \brcB{ \Ti: \inf_{u\in\ivfo{\delta_k,\delta_k+2\delta_n}} 2\bk{\ell^u} \leq \bk{\ell^{\delta_k}} \text{ and } \bk{\ell^{\delta_k}} \geq k^2\delta_k^{\tfrac{1}{\gamma-1}} }
  \end{align*}
  and $A_2(\delta_k,\delta_n,h_p) \eqdef \cup_{\kappa=1}^\infty A_2(\delta_k,\delta_n,h_p,\kappa)$ where for any $\kappa\geq 1$:
  \begin{align*}
    A_2(\delta_k,\delta_n,h_p,\kappa) \eqdef \brcB{ \Ti: \#\widehat\Tbb\pth{\delta_k,\delta_n,h_p,\kappa} \geq \delta_n^{1-\gamma h_p}\bk{\ell^{\delta_k}} + \kappa n^2}.
  \end{align*}
  We may then define a second collection of subtrees for any $a>0$:
  \begin{align*}
    \Tbb_A\pth{a,\delta_k,\delta_n,h_p} \eqdef \brcB{ \Ti_\sigma\in\Tbb(a,\delta_k) : \Ti_\sigma\in A_1(\delta_k, \delta_n)\text{ or } \Ti_\sigma\in A_2(\delta_k,\delta_n,h_p) }.
  \end{align*}
  Note that according the previous definitions, if $\Tbb_A\pth{a,\delta_k,\delta_n,h_p}=\vset$, then for any $t\in\ivfo{a+\delta_k+\delta_n,a+\delta_k+2\delta_n}$ and any $\sigma\in\Ti(t)$:
  \begin{align*}
    \#\pthb{ \Bi(\sigma,2\delta_k) \cap \Tbb\brc{t-\delta_n,\delta_n,H_p,\kappa} } \leq  c\,\delta_n^{1-\gamma h_p} \pthB{\ell^t\pthb{ \Bi(\sigma,4\delta_k)} + g(\delta_k)^{-2}\delta_k^{\tfrac{1}{\gamma-1}}} + c\,\kappa n^2.
  \end{align*}
  As a consequence, we will obtain the desired property if we prove that the collection $\Tbb_A\pth{a,\delta_k,\delta_n,h_p}$ is uniformly empty for any $\delta_n$ large enough.

  For that purpose, recall that given $\Gi_{a}$, the branching property of Lévy trees endows that $\#\Tbb_A\pth{a,\delta_k,\delta_n,h_p}$ is a Poisson random variable parametrised by $\bk{\ell^{a}}\Nb\pthb{A_1(\delta_k,\delta_n)\cup A_2(\delta_k,\delta_n,h_p)}$.
  We may begin by estimating the measure of the first event of $A_1(\delta_k, \delta_n)$. The Ray--Knight theorem proved by \citet[Th 1.4.1]{Duquesne.LeGall-2002} states that given $\Gi_{\delta_k}$, under $\Nb_{\delta_k}$, the process $X_u\eqdef\bk{\ell^{u-\delta_k}}$ is a stable CSBP starting from $\bk{\ell^{\delta_k}}$. Then, Lemma~3.4 presented in \citet{Balanca-2015b} entails that if $X$ is a stable CSBP, for any $x\geq k^2\delta_k^{1/(\gamma-1)}$
  \begin{align*}
    \prB[_x]{ \inf_{u\leq2\delta_n} X_u \leq x/2  } \leq \exp\pthb{-c_0\,x\delta_n^{-1/(\gamma-1)}} \leq \exp\pthb{-c_0\,k^2(\delta_k/\delta_n)^{1/(\gamma-1)}} \leq \exp\pth{-c_1\,n^{3/2}},
  \end{align*}
  where the constants $c_0,c_1>0$ only depends on $\gamma$. Hence, $\Nb\pthb{ A_1(\delta_k, \delta_n) } \leq v(\delta_k)\exp\pth{-c_1\,n^{3/2} }$.

  Let us now investigate the second event $A_2(\delta_k,\delta_n,h_p)$. Still using the branching property of stable trees, we know that given $\Gi_{\delta_k}$, $\#\widehat\Tbb\pth{\delta_k,\delta_n,h_p,\kappa}$ is a Poisson random variable parametrized by
  \begin{align*}
    \bk{\ell^{\delta_k}}\Nb\pthB{ \sup_{u\in\ivof{\delta_n,4\delta_n}}\bk{\ell^u} \geq \kappa\delta_n^{h_p} } \leq c_2\,\bk{\ell^{\delta_k}} \,\delta_n^{1-\gamma h_p}
  \end{align*}
  where the bound is a consequence of Lemma~3.5 in \cite{Balanca-2015b}. Therefore, using a classic Chernoff inequality on Poisson distributions, we get:
  \begin{align*}
    \Nb_{\delta_k}\pthcbb{ \bigcup_{\kappa=1}^\infty \#\widehat\Tbb\pth{\delta_k,\delta_n,h_p,\kappa} \geq 4c_2\,\delta_n^{1-\gamma h_p}\bk{\ell^{\delta_k}} + \kappa n^2 }{\Gi_{\delta_k}}
    &\leq \sum_{\kappa=1}^\infty \exp\pthb{-c_3\,\kappa n^2} \\
    &\leq c_4\,\exp\pthb{-c_1\,n^2}.
  \end{align*}
  up to a modification of $c_1$. Combining the two previous estimates, we have obtained: $\Nb\pthb{A_1(\delta_k, \delta_n)\cup A_2(\delta_k,\delta_n,h_p)} \leq c_4\,v(\delta_k)\exp\pthb{-c_1\,n^{3/2}}$. Hence, Markov's inequality entails:
  \begin{align*}
    \Nb_{a}\pthcb{\#\Tbb_A\pth{a,\delta_k,\delta_n,h_p} \geq 1}{\Gi_{a}} \leq c_4\,\bk{\ell^{a}} v(\delta_n)\exp\pthb{-c_1\,n^{3/2}},
  \end{align*}
  and thus, $\Nb\pthb{ \#\Tbb_A\pth{a,\delta_k,\delta_n,h_p} \geq 1 } \leq c_4\, v(\delta_n)\exp\pthb{-c_1\,n^{3/2}}$.

  As a consequence, we get:
  \begin{align*}
    \Nb\pthbb{ \bigcup_{n\in\N} \bigcup_{p=1}^n \bigcup_{\delta_k\in\ivff{\delta_n,2b}}  \bigcup_{j\delta_n\in\ivoo{\delta_k,2b}}   \brcb{\Tbb_A\pth{j\delta_n,\delta_k,\delta_n,h_p} \geq 1} } \leq c_5\sum_{n\in\N} n^22^{n} v(\delta_n)\exp\pthb{-c_1\,n^{3/2}} < \infty,
  \end{align*}
  Borel--Cantelli lemma then entails the desired result.
\end{proof}

As a direct consequence of Lemma~\ref{lemma:unif_bound_Th}, we are also able to bound the local contribution of $\Ti\brc{\delta_n,H_p,\kappa}$ to the local time:
\begin{align}  \label{eq:unif_bound_localtime_Th}
  \ell^t\pthb{ \Bi(\sigma,2r) \cap \Ti\brc{\delta_n,H_p,\kappa} } \leq \min\brcB{\ell^t\pthb{ \Bi(\sigma,2r)}, c_0\,&\kappa \delta_n^{1-(\gamma-1) h_p} \pthB{\ell^t\pthb{ \Bi(\sigma,8r)} + g(r)^{-2}r^{\tfrac{1}{\gamma-1}}} \nonumber \\
  +c_0\,&\kappa^{2}n^2\delta_n^{h_{p}} }.
\end{align}

As another corollary of this result which will be extensively used in the rest of this work, we obtain a uniform bound on the collections $\Tbb\pth{t,\delta,h_<,\kappa}$.
\begin{lemma}  \label{lemma:unif_bound_Th_cor}
  $\Nb$-a.e. there exist two positive constants $\delta_0$ and $c_0$ only depending on $\gamma$ such that for every $\delta\in\ivoo{0,\delta_0}$, all $t\in\ivoo{0,b}$, every $h\in\ivff{0,\infty}$ and any $\kappa\geq 1$
  \begin{align*}
    \#\Tbb\pth{t,\delta,h_<,\kappa} \leq c_0\,\kappa \brcB{\delta^{-(\gamma h-1)\wedge\tfrac{1}{\gamma-1}} \pthb{\bk{\ell^t}+1} +  g(\delta)^{-2} }.
  \end{align*}
\end{lemma}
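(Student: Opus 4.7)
The plan is to apply Lemma~\ref{lemma:unif_bound_Th} (in its form for the $\widehat H_p=\ivof{-\infty,h_p}$ families) globally---taking the ball radius $r$ large enough to cover the whole tree at level $t$---to a dyadic discretisation of $\delta$ and $h$.

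Given $\delta\in\ivoo{0,\delta_0}$ and $h\in\ivfo{0,\tfrac{1}{\gamma-1}}$, I would choose $n\in\N$ with $\delta_n\le\delta<2\delta_n$ and $p\in\brc{1,\dots,n}$ with $h_{p-1}\le h<h_p$, where $h_p=p/(n(\gamma-1))$ as in \eqref{eq:intervals_Hp}. Since $\delta^a$ is increasing in $\delta$ and decreasing in $a$ on $\ivoo{0,1}$, and $\ivff{\delta,2\delta}\subset\ivof{\delta_n,4\delta_n}$ (up to a harmless enlargement of the right endpoint if $\delta=\delta_n$), the elementary inclusion
\begin{align*}
  \Tbb(t,\delta,h_<,\kappa)\subset\widehat\Tbb(t,\delta_n,h_p,\kappa)
\end{align*}
holds, where $\widehat\Tbb$ is the auxiliary collection (sup of local time taken on the enlarged interval) introduced in the proof of Lemma~\ref{lemma:unif_bound_Th}. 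Applying that lemma for the $\widehat H_p$-family at the shifted level $t'=t+\delta_n$ with any $\sigma\in\Ti(t')$ and $r=4b$, one has $\Bi(\sigma,2r)\supset\Ti(t)$ and $\ell^{t'}\pthb{\Bi(\sigma,8r)}=\bk{\ell^{t+\delta_n}}$, so that
\begin{align*}
  \#\widehat\Tbb(t,\delta_n,h_p,\kappa)\le c\,\delta_n^{1-\gamma h_p}\pthb{\bk{\ell^{t+\delta_n}}+C_b}+c\,\kappa\,n^2.
\end{align*}
The discretisation loss is absorbed in constants: $h_p-h_{p-1}=1/(n(\gamma-1))$ together with $\delta_n^{-1/n}=2$ give $\delta_n^{1-\gamma h_p}\le c_\gamma\,\delta^{1-\gamma h}$, and $n\le 2g(\delta)^{-1}$ converts $\kappa n^2$ into $\kappa g(\delta)^{-2}$. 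The range $h\ge\tfrac{1}{\gamma-1}$ is settled by the trivial inclusion $\Tbb(t,\delta,h_<,\kappa)\subset\Tbb(t,\delta,(\tfrac{1}{\gamma-1})_<,\kappa)$, producing the $(\gamma h-1)\wedge\tfrac{1}{\gamma-1}$ exponent.

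The main obstacle, and the step where I expect the bulk of the technical work, is the replacement of $\bk{\ell^{t+\delta_n}}$ by $\bk{\ell^t}+1$ uniformly in $t\in\ivoo{0,b}$. To close this gap I would add a Borel--Cantelli argument in the spirit of the proof of Lemma~\ref{lemma:unif_bound_Th}, exploiting that $(\bk{\ell^s})_{s\ge 0}$ is a critical stable CSBP whose upper fluctuations over a short interval of length $\delta_n$ are controlled by the stable jump measure (cf.\ Lemma~3.4 in \cite{Balanca-2015b}): on the dyadic grid $\brc{t_k=k\delta_n}$ the events $\brc{\bk{\ell^{t_k+\delta_n}}\ge c\pth{\bk{\ell^{t_k}}+1}}$ have $\Nb$-measure summable in $n$ after a union bound over $k\in\brc{0,\dots,\floor{b\,2^n}}$, and Borel--Cantelli then yields $\Nb$-a.e.\ the uniform inequality $\bk{\ell^{t+\delta_n}}\le c\pth{\bk{\ell^t}+1}$ for $n$ large. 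The residual $+1$ comes naturally from this concentration step (it absorbs both the small-mass case $\bk{\ell^t}\le 1$ and the constant $C_b$). Combining the three steps produces the claimed bound.
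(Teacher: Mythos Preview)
Your approach is essentially the paper's: discretise $\delta$ and $h$ to the dyadic grids of Lemma~\ref{lemma:unif_bound_Th}, include into the $\widehat H_p$-collections, apply that lemma with $r$ large, and absorb the discretisation losses. Two points, however.

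First, the inclusion you invoke for $h\geq\tfrac{1}{\gamma-1}$ is backwards. For $\delta<1$ one has $\delta^{h}<\delta^{1/(\gamma-1)}$, so the constraint $\sup>\kappa\delta^{h}$ is \emph{weaker}, and hence $\Tbb\pthb{t,\delta,(\tfrac{1}{\gamma-1})_<,\kappa}\subset\Tbb\pth{t,\delta,h_<,\kappa}$, not the other way round. The correct step (and what the paper does) is the trivial inclusion $\Tbb\pth{t,\delta,h_<,\kappa}\subset\Tbb(t,\delta_n)$ and an appeal to Lemma~\ref{lemma:unif_bound_Th} with $\widehat H_n=\R$, which carries the exponent $h_n=\tfrac{1}{\gamma-1}$.

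Second, the extra Borel--Cantelli you earmark as ``the main obstacle'' is not needed. Re-read the proof of Lemma~\ref{lemma:unif_bound_Th}: the event $A_2$ already bounds $\#\widehat\Tbb$ in terms of $\bk{\ell^{\delta_k}}$ at the \emph{rooting} level, and $A_1$ is precisely the device that converts this to the higher level $t$ appearing in the lemma's statement. So the Borel--Cantelli inside Lemma~\ref{lemma:unif_bound_Th} already delivers the count bound at the level where the subtrees are rooted; the paper simply reads off $\bk{\ell^t}$ for subtrees rooted at $t$, with no further work. Incidentally, your proposed auxiliary control of the \emph{upward} fluctuation of the CSBP over $[t,t+\delta_n]$ is a different (and harder) estimate than the one in \cite[Lemma~3.4]{Balanca-2015b}, which controls the infimum.
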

\begin{proof}
  Set $t\in\ivoo{0,b}$, $\kappa>1$ and $\delta>0$ small enough. Let $h\in\ivff{0,\tfrac{1}{\gamma-1}}$, $n\in\N$ such that $\delta\in\ivfo{\delta_{n-1},\delta_n}$ and  $p_n\in\brc{1,\dotsc,n}$ such that $h\in\ivfo{h_{p_n},h_{p_n+1}}$. Then, observe that $\Tbb\pth{t,\delta,h_<,\kappa}\subset \Tbb\pth{t,\delta_n,\widehat H_{p_n},2^{-h}\kappa}$, and thus according to Lemma~\ref{lemma:unif_bound_Th}
  \begin{align*}
    \#\Tbb\pth{t,\delta,h_\leq,\kappa}
    \leq c_0\,\kappa \brcb{\delta_n^{1-\gamma h_{p_n}} \pthb{\bk{\ell^t}+1} +  g(\delta_n)^{-2} }
    \leq c_1\,\kappa \brcb{\delta^{1-\gamma h} \pthb{\bk{\ell^t}+1} +  g(\delta)^{-2} },
  \end{align*}
  since $\delta_n^{1/n}$ is constant (note that Lemma~\ref{lemma:unif_bound_Th} remains valid if ones replace the condition $\kappa\geq 1$ by $\kappa\geq\kappa_0$, the latter being fixed). Finally, the case $h>\tfrac{1}{\gamma-1}$ is a consequence on the bound on $\Tbb\pth{t,\delta_n,\R,2^{-1/(\gamma-1)}\kappa}$
\end{proof}

We will also require in the following sections a control on the infimum of the local time, as presented in the next lemma.
\begin{lemma}  \label{lemma:inf_local_time}
  $\Nb$-a.e. there exists $N\in\N$ such that for every $n\geq N$, any $\delta_k\in\ivff{\delta_n,b}$ and any $j\delta_n\in\ivoo{0,b}$:
  \begin{align*}
    \forall \Ti_\sigma\in\Tbb(j\delta_n,\delta_k);\quad \inf_{u\in\ivff{\delta_k,\delta_k+4\delta_n}} \bk{\ell^u}(\Ti_\sigma)  + g(\delta_k)^{-2} \delta_k^{\tfrac{1}{\gamma-1}} \geq \tfrac{1}{2}\bk{\ell^{\delta_k}}(\Ti_\sigma).
  \end{align*}
\end{lemma}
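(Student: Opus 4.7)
The plan is to adapt the strategy of the proof of Lemma~\ref{lemma:unif_bound_Th}: for each fixed pair of scales $(j\delta_n, \delta_k)$, bound the $\Nb$-probability that some bad subtree exists via the branching property, then union-bound over all such pairs and conclude by Borel--Cantelli.

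First I would observe that the inequality in the lemma is trivially true on any subtree $\Ti_\sigma$ for which $\bk{\ell^{\delta_k}}(\Ti_\sigma) \leq 2g(\delta_k)^{-2}\delta_k^{1/(\gamma-1)}$, so it suffices to control the single-tree event
\[
  A(\delta_k,\delta_n) \eqdef \brcBB{ \Ti\,:\, \bk{\ell^{\delta_k}} \geq 2g(\delta_k)^{-2}\delta_k^{1/(\gamma-1)} \text{ and } \inf_{u\in\ivff{\delta_k,\delta_k+4\delta_n}} \bk{\ell^u} < \tfrac{1}{2}\bk{\ell^{\delta_k}} }.
\]
By the Ray--Knight theorem (Theorem~1.4.1 in \cite{Duquesne.LeGall-2002}), under $\Nb$ conditional on $\Gi_{\delta_k}$ the shifted process $u \mapsto \bk{\ell^{\delta_k+u}}$ is a $\gamma$-stable CSBP started from $\bk{\ell^{\delta_k}}$. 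The CSBP fluctuation bound of Lemma~3.4 in \cite{Balanca-2015b}, already invoked in the proof of Lemma~\ref{lemma:unif_bound_Th}, then yields, after integrating over the law of $\bk{\ell^{\delta_k}}$,
\[
  \Nb\pthb{A(\delta_k,\delta_n)} \leq v(\delta_k)\exp\pthb{-c_0\,g(\delta_k)^{-2}(\delta_k/\delta_n)^{1/(\gamma-1)}} \leq v(\delta_n)\exp(-c_1 n^2),
\]
uniformly in $\delta_k\in\ivff{\delta_n,b}$, the worst case being $\delta_k=\delta_n$ where $g(\delta_n)^{-2}\asymp n^2$.

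Next I would apply the branching property of Lévy trees exactly as in the proof of Lemma~\ref{lemma:unif_bound_Th}: given $\Gi_{j\delta_n}$, the collection $\Tbb(j\delta_n,\delta_k)$ is a Poisson point process on $\Ti(j\delta_n)\times\Tbb$ with intensity $\ell^{j\delta_n}(\dt\sigma)\mathbf{1}_{\brc{h(\Ti)>\delta_k}}\Nb(\dt\Ti)$. Combining $1-e^{-x}\leq x$ with the identity $\Nb[\bk{\ell^{j\delta_n}}]=1$, the $\Nb$-measure of the event that some subtree in $\Tbb(j\delta_n,\delta_k)$ lies in $A(\delta_k,\delta_n)$ is bounded by $\Nb(A(\delta_k,\delta_n))$ itself. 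Union-bounding over the $O(b\cdot 2^n)$ admissible values of $j$ and the $O(n)$ admissible dyadic $k$'s produces, at scale $n$, an overall bound of order $n\cdot 2^n\cdot v(\delta_n)\exp(-c_1 n^2)$; since $v(\delta_n)\asymp 2^{n/(\gamma-1)}$, this estimate is summable in $n$, and Borel--Cantelli supplies an almost-sure $N$ beyond which no bad subtree occurs at any admissible pair $(j\delta_n,\delta_k)$, which is exactly the statement of the lemma.

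The hard part will simply be the careful bookkeeping required to chain the Ray--Knight identification with the quenched Poissonian description of $\Tbb(j\delta_n,\delta_k)$; both ingredients are already set up in the proof of Lemma~\ref{lemma:unif_bound_Th} and transfer directly to this setting. The Gaussian-type factor $\exp(-c_1 n^2)$ dominates the mild prefactor $n\cdot 2^{n\gamma/(\gamma-1)}$ very comfortably, so no tightness in the numerical constants of the statement is needed.
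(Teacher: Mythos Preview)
Your proposal is correct and follows essentially the same route as the paper's proof: isolate the trivial regime, apply the Ray--Knight identification together with the CSBP fluctuation bound (Lemma~3.4 of \cite{Balanca-2015b}) to control the single-tree event, then use the branching property at level $j\delta_n$, $\Nb[\bk{\ell^{j\delta_n}}]=1$, a union bound over $(j,k)$, and Borel--Cantelli. The only cosmetic difference is that you obtain $\exp(-c_1 n^2)$ where the paper records the weaker $\exp(-c_1 n^{3/2})$; both are summable against the polynomial-in-$2^n$ prefactor, so this changes nothing.
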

\begin{proof}
  Let us set $n\in\N$ and $k\leq n$. As recall in the proof of Lemma~\ref{lemma:unif_bound_Th}, given $\Gi_{\delta_k}$, the process $X:u\mapsto\bk{\ell^{u-\delta_k}}$ is a stable CSBP starting from $\bk{\ell^{\delta_k}}$. Hence, still according to Lemma~3.4 presented in \citet{Balanca-2015b}, for any $x\geq 2k^2\delta_k^{1/(\gamma-1)}$
  \begin{align*}
    \prB[_x]{ \inf_{u\leq4\delta_n} X_u \leq \tfrac{x}{2}-k^2\delta_k^{1/(\gamma-1)}  } \leq \exp\pthb{-c_0\,x\delta_n^{-1/(\gamma-1)}} \leq \exp\pth{-c_1\,n^{3/2}}.
  \end{align*}
  As a consequence of the Ray--Knight theorem,
  \begin{align*}
    \Nb\pthB{ \inf_{u\in\ivff{\delta_k,\delta_k+4\delta_n}} \bk{\ell^u}(\Ti_\sigma)  + g(\delta_k)^{-2} \delta_k^{\tfrac{1}{\gamma-1}} < \tfrac{1}{2}\bk{\ell^{\delta_k}} } \leq v(\delta_k)\exp\pth{-c_0\,n^{3/2}}.
  \end{align*}
  The rest of the proof then follows the structure previously presented in Lemma~\ref{lemma:unif_bound_Th}: summing over $k\leq n$ and $n\in\N$, the branching property of stable trees and Borel--Cantelli lemma entail the desired result.
\end{proof}

Note that as a direct consequence of Lemma~\ref{lemma:inf_local_time}, we also obtain a property on the collections $\Ti\brc{t,\delta_\geq,h_\geq,\kappa}$. Indeed, suppose $n\in\N$, $j\delta_n\in\ivoo{0,b}$, $h\in\ivffb{0,\tfrac{1}{\gamma-1}}$ and $t\in\ivff{j\delta_n,(j+1)\delta_n}$. Then,
\begin{align}  \label{eq:cor_property_Th}
  \sigma\in\Ti\brc{t,\delta_{n\geq},h_\geq,\kappa} \Longrightarrow \sigma'\in\Ti\brc{j\delta_n,\delta_{n\geq},h_\geq,\kappa'}
\end{align}
where $\kappa'=2\max(4^{1/(\gamma-1)}\kappa,n^2)$ and $\sigma'=\iivff{\rho,\delta}\cap\Ti(j\delta_n)$.


\section{Proof of Proposition~\ref{prop:spectrum_excursions}: multifractal spectrum  under measure \texorpdfstring{$\N_x$}{Nx}} \label{sec:stable_sbm_high_dim}

In this section, we aim to lift the multifractal structure of random stable trees to multifractal spectrum of measures $\Xii_t$ (recall \eqref{eq:sbm_excursion_measures} for its definition).
As previously outlined, the use of the Lévy snake approach pushes naturally towards the adaptation of techniques existing in the multiparameter Gaussian literature.

\subsection{Upper bound estimates}  \label{ssec:sbm_spectrum_ub}

To start with, we may present a simple connection between the Hölder regularity of the local time on stable trees and the measure-valued excursions of stable super-Brownian motion.
\begin{lemma}  \label{lemma:sbm_bound_holder}
  Suppose $x\in\R^d$. Then, $\N_x$-a.e. for every $t>0$,
  \begin{align}
    \forall\sigma\in\Ti(t);\quad \alpha_{\Xii_t}(W_\sigma) \leq 2 \alpha_\ell(\sigma,\Ti),
  \end{align}
  where $\alpha_{\Xii_t}(W_\sigma)$ and $\alpha_\ell(\sigma,\Ti)$ respectively denote to the pointwise exponents of the measure $\Xii_t(\dt x)$ and $\ell^t(\dt\sigma)$ at $W_\sigma$ and $\sigma$ (note that $\alpha_\ell(\sigma,\Ti)$ can be directly deduced from \eqref{eq:def_pointwise_exponent}).
\end{lemma}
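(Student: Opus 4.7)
The plan is to exploit directly the pushforward relation \eqref{eq:sbm_excursion_measures} between $\Xii_t$ and the local time $\ell^t$, together with the $(\tfrac{1}{2}-\eps)$-H\"older regularity of the tree-indexed Gaussian process $\Wi$ recalled just after \eqref{eq:cov_W}. First, I would note that \eqref{eq:sbm_excursion_measures} is equivalent to the pushforward identity $\Xii_t(A) = \ell^t\pthb{\brc{\sigma\in\Ti(t) : W_\sigma\in A}}$ valid for every Borel set $A\subset\R^d$; specialising to $A=B(W_\sigma,r)$ with $\sigma\in\Ti(t)$ expresses $\Xii_t\pthb{B(W_\sigma,r)}$ as the $\ell^t$-mass of a set of tree nodes which, thanks to the H\"older control on $\Wi$, will contain a suitable ball of $\Ti$.

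Next, fixing $\eps>0$, the H\"older regularity provides a (random) constant $C>0$ such that $\absb{W_\sigma-W_{\sigma'}} \leq C\,d(\sigma,\sigma')^{1/2-\eps}$ for all $\sigma,\sigma'\in\Ti$. Setting $\rho(r) \eqdef (r/C)^{2/(1-2\eps)}$, one obtains the inclusion $\Bi(\sigma,\rho(r))\cap\Ti(t) \subset \brcb{\sigma'\in\Ti(t) : W_{\sigma'}\in B(W_\sigma,r)}$, and hence, since $\ell^t$ is carried by $\Ti(t)$,
\begin{align*}
  \ell^t\pthb{\Bi(\sigma,\rho(r))} \;\leq\; \Xii_t\pthb{B(W_\sigma,r)}, \qquad \sigma\in\Ti(t),\ r>0.
\end{align*}
Taking logarithms, dividing through by $\log r<0$ (which reverses the inequality), and using the identity $\log r = \tfrac{1-2\eps}{2}\log\rho(r) + \log C$ to re-express the right-hand side in terms of $\log\rho(r)$, I arrive at
\begin{align*}
  \frac{\log \Xii_t\pthb{B(W_\sigma,r)}}{\log r} \;\leq\; \frac{\log\ell^t\pthb{\Bi(\sigma,\rho(r))}}{\tfrac{1-2\eps}{2}\log\rho(r) + \log C}.
\end{align*}

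Since $r\mapsto\rho(r)$ is an increasing homeomorphism near $0$, passing to the $\liminf$ as $r\to 0$ is equivalent to taking the $\liminf$ in $\rho=\rho(r)\to 0$. The additive constant $\log C$ becomes negligible relative to $\tfrac{1-2\eps}{2}\log\rho(r)\to -\infty$, and one concludes $\alpha_{\Xii_t}(W_\sigma) \leq \tfrac{2}{1-2\eps}\,\alpha_\ell(\sigma,\Ti)$. To finish, I would choose a countable sequence $\eps_n\downarrow 0$, intersect the $\N_x$-full-measure events on which the H\"older estimate holds at exponent $\tfrac{1}{2}-\eps_n$, and let $\eps_n\to 0$ on the resulting full-measure event; by the uniformity in $\sigma$ and $t$ built into the H\"older estimate, the bound then holds simultaneously for every $t>0$ and every $\sigma\in\Ti(t)$.

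The argument is essentially a reparametrisation, so no serious obstacle is expected. The care required is in tracking the direction of the inequality after dividing by the negative quantity $\log r$, in checking that the two liminfs (in $r$ and in $\rho$) coincide thanks to the monotone reparametrisation, and in selecting the exceptional null set uniformly over a dense countable sequence of H\"older exponents so that the passage $\eps\to 0$ at the end is legitimate.
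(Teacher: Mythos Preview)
Your proposal is correct and follows essentially the same approach as the paper: both arguments use the $(\tfrac{1}{2}-\eps)$-H\"older continuity of $\Wi$ to obtain a ball inclusion, push it through the identity $\Xii_t\pthb{B(W_\sigma,r)} = \ell^t\pthb{\brc{\sigma':\abs{W_{\sigma'}-W_\sigma}<r}}$, and then let $\eps\to 0$. Your version is in fact slightly more careful than the paper's in tracking the H\"older constant and in making the passage $\eps\to 0$ explicit via a countable sequence.
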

\begin{proof}
  Set $t>0$ and $\sigma\in\Ti(t)$. We recall that for every $\eps>0$, $W$ is $\tfrac{1-\eps}{2}$ Hölder continuous. Hence, for any $r>0$ sufficiently small, $W\pthb{ \Bi(\sigma,r^{2/(1-\eps)}) } \subset B(W_\sigma,r)$.
  As a consequence, the characterisation~\eqref{eq:sbm_excursion_measures} of the excursion measure $\Xii_t$ yields
  \begin{align*}
    \Xii_t\pthb{ B(W_\sigma,r) } = \ell^t\pthb{\brc{\sigma' :\abs{ W_{\sigma'}-W_\sigma}<r}} \geq \ell^t\pthb{ \Bi(\sigma,r^{2/(1-\eps)}) }.
  \end{align*}
  Therefore, $\alpha_{\Xii_t}(W_\sigma) \leq \tfrac{2}{1-\eps}\,\alpha_\ell(\sigma,\Ti)$, proving the lemma as $\eps\rightarrow 0$.
\end{proof}
Introducing the following sets $F(h,\ell^t)$ and $F(h,\Xii_t)$ derived from the iso-Hölder sets \eqref{eq:def_spectrum}:
\begin{align}  \label{eq:sets_F}
  F(h,\ell^t) \eqdef \brcb{\sigma\in\Ti(t) : \alpha_\ell(\sigma,\Ti)\leq h}\quad\text{and}\quad F(h,\Xii_t) \eqdef \brcb{x\in\R^d : \alpha_{\Xii_t}(x)\leq h}
\end{align}
we note that Lemma~\ref{lemma:sbm_bound_holder} entails $\N_x$-a.e.
\begin{align}
  \forall t>0,\ \forall h\geq 0;\quad W(F(h,\ell^t)) \subseteq F(2h,\Xii_t).
\end{align}

If the local mass of the stable SBM can be simply lower bounded by the local time in the tree, it remains more complicated and subtle to obtain an equivalent upper bound. Namely, the key ingredient is to understand the structure of the set $W^{-1}(B(W_\sigma,r))$ and, at least, provide a cover sufficiently optimal of the latter, which will then provide an upper bound of the mass $\Xii_t\pthb{ B(W_\sigma,r) }$.\vsp

For that purpose, let us set in the rest of this section $\epsilon>0$ and for every $n\in\N$, define $r_n\eqdef \delta_n^{(1-\epsilon)/2}$. As previously outlined in the Section~\ref{sec:notations}, the tree-indexed process $(W_\sigma)_{\sigma\in\Ti}$ is known to be $\eta$ continuous on the tree $\Ti$ for any $\eta\in(0,1/2)$. Thus, in what follows, we may and will assume that $n$ is
sufficiently large (greater or equal than some $N_0=N_0(\omega)$) such that
$$   |W_{\sigma}-W_{\sigma'}|\leq r_n,\;\;\;\forall \sigma,\sigma': |\sigma-\sigma'|\leq \delta_n$$

Now, to start with, for every $n\in\N$, $\kappa\geq 1$, $h\in\ivffb{0,\tfrac{1}{\gamma-1}}$ and $\sigma_0\in\Ti(t)$, we will look more closely at the local mass
$\ell^t\pthb{ W^{-1}(B(W_{\sigma_0},r_n))\cap \Ti\brc{t,2\delta_{n\geq},h_\geq,\kappa} }$.
More precisely, setting $j\geq 3$ and $t\in\ivfo{j\delta_n,(j+1)\delta_n}$, we observe that:
\begin{align}  \label{eq:sbm_sum_local_time0}
  \ell^t\pthb{ \brc{ \sigma : \abs{W_{\sigma_0} - W_\sigma} < r_n }\cap \Ti\brc{t,2\delta_{n\geq},h_\geq,\kappa} }
  &\leq \sum_{ \substack{\Ti_{\sigma'}\in\Tbb((j-1)\delta_n,\delta_n)\cap \Ti\brc{t,2\delta_{n\geq},h_\geq,\kappa} \\ \abs{W_{\sigma_0}-W_{\sigma'}}\leq 4r_n }} \ell^t\pthb{ \Ti_{\sigma'} }.
\end{align}
We note that $\abs{W_{\sigma}-W_{\sigma'}}\leq 4r_n$ if $\Ti_{\sigma'}\in\Tbb((j-1)\delta_n,\delta_n)$ and $\sigma\in\Ti_{\sigma'}\cap\Ti(t)$ and recall that the informal notation $\Tbb((j-1)\delta_n,\delta_n)\cap \Ti\brc{t,2\delta_{n\geq},h_\geq,\kappa}$ is licit as for any $\Ti_{\sigma'}\in\Tbb((j-1)\delta_n,\delta_n)$, either $\Ti_{\sigma'}\cap\Ti\brc{t,2\delta_{n\geq},h_\geq,\kappa}=\vset$ or $\Ti_{\sigma'}\cap\Ti(t)\subset\Ti\brc{t,2\delta_{n\geq},h_\geq,\kappa}$.

Then, using the partition of $\Tbb((j-1)\delta_n,\delta_n)$ (and the notations) presented in Definition~\ref{def:collections_Th2} and Lemma~\ref{lemma:unif_bound_Th}, the right hand term in \eqref{eq:sbm_sum_local_time0} is bounded by:
\begin{align}  \label{eq:sbm_sum_local_time1}
  c_0\,\kappa \sum_{H_p\in\Hbb_n} \ \sum_{ \substack{\Ti_{\sigma'}\in\Tbb((j-1)\delta_n,\delta_n,H_p,\kappa)\cap \Ti\brc{t,2\delta_{n\geq},h_\geq,\kappa} \\ \abs{W_{\sigma_0}-W_{\sigma'}}\leq 5r_n }} \delta_n^{h_p},
\end{align}
since $\ell^t\pthb{ \Ti_{\sigma'} }\leq c_0\kappa\delta_n^{h_p}$ (recall that by definition  $\delta_n^{h_{p-1}}=c\delta_n^{h_{p}}$)
for any $\Ti_{\sigma'}\in\Tbb((j-1)\delta_n,\delta_n,H_p,\kappa)$.
Moreover, according to Lemma~\ref{lemma:inf_local_time} and Equation~\eqref{eq:cor_property_Th}, for any $\sigma\in\Ti\brc{t,2\delta_{n\geq},h_\geq,\kappa}$ and $\sigma'\eqdef\iivff{\rho,\sigma}\cap\Ti(j\delta_n)$, then $\sigma'\in\Ti\brc{j\delta_n,\delta_{n\geq},h_\geq,\kappa'}$ where $\kappa'=2\max(4^{1/(\gamma-1)}\kappa,n^2)$. As a consequence,
\begin{align*}
  \Tbb((j-1)\delta_n,\delta_n,H_p,\kappa)\cap \Ti\brc{t,2\delta_{n\geq},h_\geq,\kappa}  \subset \Tbb((j-1)\delta_n,\delta_n,H_p,\kappa)\cap \Ti\brc{j\delta_n,\delta_{n\geq},h_\geq,\kappa'},
\end{align*}
and based on \eqref{eq:sbm_sum_local_time1}, our problem is simplified into estimating the size of subsets $\Vbb\subset \Tbb((j-1)\delta_n,\delta_n,H_p,\kappa)\cap \Ti\brc{j\delta_n,2\delta_{n\geq},h_\geq,\kappa'}$ such that for all $\Ti_{\sigma_i},\Ti_{\sigma_l}\in\Vbb$, $\abs{W_{\sigma_i} - W_{\sigma_l}} \leq 8 r_n$. The following lemma aims to obtain a uniform bound on this quantity.
\begin{lemma}  \label{lemma:sbm_clusters_ub}
  Suppose $h\in\ivffb{0,\tfrac{1}{\gamma-1}}$.
$\N_x$-a.e. for all integers $n\in\N$ large enough, any $j\delta_n\in\ivoo{b^{-1},b}$, $t\in\ivfo{j\delta_n,(j+1)\delta_n}$, $H_p\in\Hbb_n$ and any subcollection
 $\Vbb\subset \Tbb((j-1)\delta_n,\delta_n,H_p,\kappa_n)\cap\Ti\brc{j\delta_n,\delta_{n\geq},h_\geq,\kappa_n'}$ such that
  \begin{align*}
    \forall \Ti_{\sigma_i},\Ti_{\sigma_l}\in\Vbb;\quad \abs{W_{\sigma_i} - W_{\sigma_l}} \leq 8 r_n,
  \end{align*}
  then the cardinal of the subset $\Vbb$ satisfies
  \begin{align*}
    \#\Vbb \leq \pthb{ \delta_n^{1-\gamma h_p + (d/2\wedge h)} + 1 } \,\delta_n^{-2\epsilon d},
  \end{align*}
  where $\tfrac{d}{2}\wedge h\eqdef\min(\tfrac{d}{2},h)$, $\kappa_n=n^2$ and $\kappa'_n=4^{\gamma/(\gamma-1)}n^2$.
\end{lemma}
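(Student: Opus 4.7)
The plan is to dominate $\#\Vbb$ by a simpler counting functional, estimate its conditional moments by combining Gaussian estimates for the Brownian snake with the tree regularity already established, and then invoke Borel--Cantelli after a union bound over the configurations. The two main ingredients are Lemma~\ref{lemma:sbm_lnd} (local nondeterminism of $W$) and Lemma~\ref{lemma:unif_bound_Th} (uniform tree bound).

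\textbf{Reduction.} Fix $(n,j,t,H_p)$ and abbreviate
\begin{align*}
\mathrm{S}\eqdef\Tbb((j-1)\delta_n,\delta_n,H_p,\kappa_n)\cap\Ti\brc{j\delta_n,\delta_{n\geq},h_\geq,\kappa_n'}.
\end{align*}
For any candidate $\Vbb$ and any $\sigma_1\in\Vbb$ we have $\Vbb\subset\{\sigma\in\mathrm{S}:|W_\sigma-W_{\sigma_1}|\leq 8r_n\}$, so it suffices to bound, uniformly in $\sigma_1\in\mathrm{S}$, the counting functional
\begin{align*}
Z(\sigma_1)\eqdef\#\brcb{\sigma\in\mathrm{S}:|W_\sigma-W_{\sigma_1}|\leq 8r_n}.
\end{align*}

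\textbf{Conditional first moment.} I work conditionally on $\Ti$. For $\sigma,\sigma_1\in\Ti((j-1)\delta_n)$ the increment $W_\sigma-W_{\sigma_1}$ is centered Gaussian in $\R^d$ with covariance $d(\sigma,\sigma_1)\Id_d$, whence
\begin{align*}
\pr{|W_\sigma-W_{\sigma_1}|\leq 8r_n\mid\Ti}\leq C\min\pthb{1,\,r_n^d/d(\sigma,\sigma_1)^{d/2}}.
\end{align*}
Split $\mathrm S$ into its close part $\{d(\sigma,\sigma_1)\leq 2r_n^2\}$ and its far part. For the close part, pick any descendant $\widetilde\sigma_1$ of $\sigma_1$ at level $j\delta_n$; since $\widetilde\sigma_1\in\Ti\brc{j\delta_n,\delta_{n\geq},h_\geq,\kappa_n'}$, Lemma~\ref{lemma:unif_bound_Th} yields
\begin{align*}
\#\brcb{\sigma\in\mathrm{S}:d(\sigma,\sigma_1)\leq 2r_n^2}\leq c\,\delta_n^{1-\gamma h_p}r_n^{2h}+O(\kappa_n n^2).
\end{align*}
For the far part, Stieltjes integration of the Gaussian bound against the counting function $N(D)\lesssim\delta_n^{1-\gamma h_p}D^h$ (again from Lemma~\ref{lemma:unif_bound_Th}) contributes $r_n^d\,\delta_n^{1-\gamma h_p}$ when $h>d/2$ (convergent integral at $0$) and $\delta_n^{1-\gamma h_p}r_n^{2h}$ when $h<d/2$ (divergent integral saturated at $r_n^2$). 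Using $r_n=\delta_n^{(1-\epsilon)/2}$ this gives in every case
\begin{align*}
\Esp\bigl[\,Z(\sigma_1)\bigm|\Ti\,\bigr]\leq C\,\delta_n^{1-\gamma h_p+(d/2\wedge h)-\epsilon d/2}.
\end{align*}

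\textbf{Moment upgrade and Borel--Cantelli.} To promote this into an almost-sure uniform bound of the claimed form, I would compute $\Esp[Z(\sigma_1)^p\mid\Ti]$ for an integer $p=p(\epsilon,d,\gamma)$ chosen below. Expanding $Z^p=\sum_{\sigma_2,\ldots,\sigma_{p+1}}\prod_i\indi\{|W_{\sigma_i}-W_{\sigma_1}|\leq 8r_n\}$ and bounding the joint Gaussian density of $(W_{\sigma_i}-W_{\sigma_1})_{i=2}^{p+1}$ via Lemma~\ref{lemma:sbm_lnd}, the determinant $\det\Sigma$ factorises along the edges of the Steiner subtree spanned by $\{\sigma_1,\ldots,\sigma_{p+1}\}$, on which $W$ restricts to independent Brownian motions. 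Integrating this factorised bound against the tree using Lemma~\ref{lemma:unif_bound_Th} reduces the $p$-point sum to $p$ nested one-point sums, yielding a Poisson-type bound
\begin{align*}
\Esp\bigl[\,Z(\sigma_1)^p\bigm|\Ti\,\bigr]\leq C_p\bigl(\Esp[Z(\sigma_1)\mid\Ti]+1\bigr)^p.
\end{align*}
With $K_n\eqdef(\delta_n^{1-\gamma h_p+(d/2\wedge h)}+1)\delta_n^{-2\epsilon d}$, Markov gives $\pr{Z(\sigma_1)\geq K_n\mid\Ti}\leq C_p\,\delta_n^{(3\epsilon d/2)p}$. A union bound over the (at most polynomially many in $\delta_n^{-1}$) configurations $(j,t,H_p)$ and candidate $\sigma_1\in\mathrm{S}$ (whose cardinality is controlled by Lemma~\ref{lemma:unif_bound_Th_cor}) produces a bad-event probability of order $\delta_n^{(3\epsilon d/2)p-C(\gamma)}$, summable in $n$ once $p$ is large enough. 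Borel--Cantelli concludes.

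\textbf{Main obstacle.} The chief technical difficulty is controlling $\Esp[Z^p\mid\Ti]$ without the factorial blow-up produced by a naive iterated application of Lemma~\ref{lemma:sbm_lnd} (whose constant $c_{1,N}$ decays like $(N!)^{-2}$). The remedy is geometric: the Steiner subtree of the test points has only $O(p)$ edges, on which $W$ runs as $O(p)$ independent Brownian motions, so the joint Gaussian density factorises over these edges rather than suffering from a $p$-point LND chain. Integrating this factorised bound against the tree using Lemma~\ref{lemma:unif_bound_Th} keeps the $p$-dependence polynomial and essentially reduces the full moment computation to $p$ independent copies of the first-moment argument.
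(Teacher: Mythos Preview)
Your approach is essentially the same as the paper's: a moment method combining the local nondeterminism of $W$ (Lemma~\ref{lemma:sbm_lnd}) with the tree regularity (Lemma~\ref{lemma:unif_bound_Th}), followed by Borel--Cantelli over the polynomially many configurations. The paper's implementation differs in two minor respects. First, rather than anchoring at a fixed $\sigma_1$ and bounding $Z(\sigma_1)^p$ with a subsequent union bound over $\sigma_1\in\mathrm S$, the paper directly controls the unordered counting variable
\[
  \Ni(j,n,H_p)\;=\!\!\underset{\Ti_{\sigma_1},\dotsc,\Ti_{\sigma_m} \text{ distinct}}{\sum\cdots\sum}\!\! \indi_{\brcb{ \max_{i,l}\abs{W_{\varsigma_i} - W_{\varsigma_l}} \leq 12 r_n}},
\]
observing that the existence of a cluster of size $z_n$ forces $\Ni\geq\binom{z_n}{m}$ and then applying Markov's inequality. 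This avoids the extra union bound but otherwise leads to the same chain of estimates you wrote down (LND conditioning, the inequality $\min_k d(\varsigma_k,\varsigma_m)^{-d/2}\le\sum_k d(\varsigma_k,\varsigma_m)^{-d/2}$, reduction by symmetry to path products, and summation over dyadic shells via Lemma~\ref{lemma:unif_bound_Th}). Second, the paper shifts the comparison points from the subtree roots $\sigma_i$ at level $(j-1)\delta_n$ to descendants $\varsigma_i$ at level $j\delta_n$, guaranteeing $d(\varsigma_i,\varsigma_l)\geq 2\delta_n$ for distinct indices and thereby sidestepping the short-distance singularity in the one-point sum; your close/far split plays the same role.

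Your ``main obstacle'' paragraph identifies a non-issue. The paper does apply Lemma~\ref{lemma:sbm_lnd} iteratively and absorbs the resulting $m$-dependent constant (of order $c_{1,m}^{-d}\,m!$, roughly $(m!)^{O(d)}$) into $C_m$. Since $m$ is chosen once and for all as a fixed integer depending only on $\epsilon$, $d$, and $\gamma$ (large enough that the net exponent $m d\epsilon-\tfrac{1}{\gamma-1}$ exceeds $2$), this constant is harmless. The Steiner-tree factorisation you sketch is therefore unnecessary, and in any case would require more care than indicated: the events $\{|W_{\sigma_i}-W_{\sigma_1}|\leq 8r_n\}$ do not decouple along Steiner edges without first integrating out the values at the internal branch points. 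The straightforward LND chain, exactly as in the paper, is both simpler and sufficient.
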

Before presenting the proof, let us remark that Lemma~\ref{lemma:sbm_clusters_ub} is mainly interesting in the case $h_p > \tfrac{1}{\gamma}$ since if
$h_p \leq \tfrac{1}{\gamma}$, Lemma~\ref{lemma:unif_bound_Th} already provides a tight bound:
\[
  \#\Vbb \leq \# \Tbb((j-1)\delta_n,\delta_n,H_p,\kappa_n) \leq c\,\kappa_n\log(1/\delta_n)^2.
\]
\begin{proof}
  $\Ti$ is supposed to follow the law of stable trees and satisfy Lemma 2.2.
  In addition, we set $h\in\ivffb{0,\tfrac{1}{\gamma-1}}$, $n\in\N$, $j\delta_n\in\ivoo{b^{-1},b}$, $H_p\in\Hbb_n$ and for the sake of readability:
  \begin{align*}
    \Tbb(j,n,p)\eqdef\Tbb((j-1)\delta_n,\delta_n,H_p,\kappa_n)\cap\Ti\brc{j\delta_n,\delta_{n\geq},h_\geq,\kappa_n'}.
  \end{align*}
  In the proof, we will study a slightly different collection $\Vbb'\subset\Tbb(j,n,p)$. Namely, let $A(j,n,H_p)$ be the event there exists a subcollection $\Vbb'\subset\Tbb(j,n,p)$ such that
  \begin{align*}
    \#\Vbb' > z_n\quad \text{where } z_n \eqdef \pthb{ \delta_n^{1-\gamma h_p + (d/2\wedge h)} + 1 } \,\delta_n^{-2\epsilon d},
  \end{align*}
  and
  \begin{align*}
    \forall\Ti_{\sigma_i},\Ti_{\sigma_l}\in\Vbb';\quad \abs{W_{\varsigma_i} - W_{\varsigma_l}} \leq 12r_n,\quad\text{where }\varsigma_i\eqdef\Ti\pthb{j\delta_n}\cap\iivff{\rho,\sigma_\zeta(\Ti_{\sigma_i})},
  \end{align*}
  recalling that $\sigma_\zeta(\Ti_{\sigma_i})$ denotes the unique extinction vertex of the subtree $\Ti_{\sigma_i}$. We note that the desired bound on the subcollection $\Vbb$ can be easily deduced from an equivalent result on $\Vbb'$.

  Then, setting $m\geq 1$, we define the r.v. $\Ni(j,n,H_p)$ as following:
  \begin{align*}
    \Ni(j,n,H_p) = \underset{\Ti_{\sigma_1},\dotsc,\Ti_{\sigma_m} \text{ distinct}}{\sum\cdots\sum} \indi_{\brcb{ \max_{i,l\leq m}\abs{W_{\varsigma_i} - W_{\varsigma_l}} \leq 12 r_n}},
  \end{align*}
  where for the sake of readability, we omit to recall that the sum is over elements $\Ti_{\sigma_i}\in\Tbb(j,n,p)$. We observe that if such a previous collection $\Vbb'$ exists, we must have $\Ni(j,n,H_p) \geq \binom{\ceil{z_n}}{m}$. In other words, $A(j,n,H_p)\subset\brcb{\Ni(j,n,H_p) \geq \binom{\ceil{z_n}}{m}}$ and Markov inequality then entails
  \begin{align*}
    Q_\Ti\pthb{ \indi_{A(j,n,H_p)} } \leq \binom{\ceil{z_n}}{m}^{-1} Q_\Ti\pthb{ \Ni(j,n,H_p) }.
  \end{align*}
  We thus need to estimate
  \begin{align*}
    Q_\Ti\pthb{\Ni(j,n,H_p) } = \underset{\Ti_{\sigma_1},\dotsc,\Ti_{\sigma_m} \text{ distinct}}{\sum\cdots\sum} Q_\Ti\pthB{\brcB{\max_{1\leq i\neq l\leq m}\abs{W_{\varsigma_i} - W_{\varsigma_l}} \leq 12r_n}}.
  \end{align*}
  Let us fix $\Ti_{\sigma_1},\dotsc,\Ti_{\sigma_{m-1}}\in\Tbb(j,n,p)$ and investigate the sum
  \begin{align*}
    \sum_{\Ti_{\sigma_m}\in\Tbb(j,n,p)} Q_\Ti{\brcB{\max_{1\leq i\neq l\leq m}\abs{W_{\varsigma_i} - W_{\varsigma_l}} \leq 12r_n}}.
  \end{align*}
  Recall that $W$ is a $d$-dimensional Gaussian process whose $d$ components are independent. Furthermore, each of them satisfies the LND property described in Lemma~\ref{lemma:sbm_lnd}, i.e.
  \begin{align*}
    \varcb{W^0_{\varsigma_m}}{W^0_{\varsigma_0},W^0_{\varsigma_1},\cdots,W^0_{\varsigma_{m-1}}} \geq c_{1,m} \min_{0\leq i\leq m-1} d(\varsigma_i,\varsigma_m).
  \end{align*}
  where by convention $\varsigma_0=\rho$.
  Consequently, since the conditional variable is still Gaussian, using simple Gaussian estimates, the term $Q_\Ti{\brcb{\max_{1\leq i\neq l\leq m}\abs{W_{\varsigma_i} - W_{\varsigma_l}} \leq 12r_n}}$ is bounded, up to a constant, by
  \begin{align*}
    &Q_\Ti{\brcB{\max_{1\leq i\neq l\leq m-1}\abs{W_{\varsigma_i} - W_{\varsigma_l}} \leq 12r_n}} \cdot
    r_n^d \pthB{ \min_{1\leq k\leq m-1} d(\varsigma_k,\varsigma_m) }^{-d/2}.
  \end{align*}
  Note that we may omit the root $\rho=\varsigma_0$ as the minimum distance with the other elements at level $j\delta_n$ will always be at most of order $d(\rho,\varsigma_m)$.

  We observe as well that $\brcb{\min_{1\leq k\leq m-1} d(\varsigma_k,\varsigma_m) }^{-d/2} \leq \sum_{k=1}^m  d(\varsigma_k,\varsigma_m)^{-d/2}$, and as a consequence, our problem is reduced to the study of the following type of sum, for any $k\in\brc{1,\dots,m-1}$:
  \begin{align*}
    \underset{\Ti_{\sigma_1},\dotsc,\Ti_{\sigma_m} \text{ distinct}}{\sum\cdots\sum} Q_\Ti{\brcB{\max_{1\leq i\neq l\leq m-1}\abs{W_{\varsigma_i} - W_{\varsigma_l}} \leq 12r_n}} \cdot\,r_n^d \, d(\varsigma_k,\varsigma_m)^{-d/2}.
  \end{align*}
  Since the latter quantity is invariant under permutations on the index set $\brc{1,\dots,m-1}$, we may assume without any loss of generality that $k=m-1$ and then proceed by induction. At the end, our study is therefore simplified into the investigation of the following quantity:
  \begin{align*}
    r_n^{d(m-1)} \underset{\Ti_{\sigma_1},\dotsc,\Ti_{\sigma_m} \text{ distinct}}{\sum\cdots\sum} \prod_{i=1}^{m-1} d(\varsigma_i,\varsigma_{i+1})^{-d/2}.
  \end{align*}
  Note that the use of the collection $\Vbb'$ (instead of $\Vbb$) and the tree configuration ensure that for every $i\in\brc{1,\dotsc,m-1}$, $d(\varsigma_i,\varsigma_{i+1})\geq 2\delta_n$.
  Hence, setting once more $\Ti_{\sigma_1},\dotsc,\Ti_{\sigma_{m-1}}\in\Tbb(j,n,p)$, we observe
  \begin{align*}
    \sum_{\Ti_{\sigma_{m}}\neq\Ti_{\sigma_{m-1}}} d(\varsigma_{m-1},\varsigma_{m})^{-d/2}
    &\leq 2\sum_{\delta_k\in\ivff{\delta_n,b}}  \sum_{\Ti_{\sigma_m}\in \Bi(\varsigma_{m-1},2\delta_k)\cap\Tbb(j,n,p)} \delta_k^{-d/2},
  \end{align*}
  Based on Definition~\ref{def:collections_Th1} of $\Ti\brc{j\delta_n,\delta_{n\geq},h_\geq,\kappa_n'}$, Lemma~\ref{lemma:unif_bound_Th} provides a tight bound on the number of elements in $\Bi(\varsigma_{m-1},2\delta_k)\cap\Tbb((j-1)\delta_n,\delta_n,H_p,\kappa_n)\cap\Ti\brc{j\delta_n,\delta_{n\geq},h_\geq,\kappa_n'}$, entailing
  \begin{align*}
    \sum_{\Ti_{\sigma_{m}}\neq\Ti_{\sigma_{m-1}}} d(\varsigma_{m-1},\varsigma_{m})^{-d/2}
    &\leq c_0\, n^4 \sum_{\delta_k\in\ivff{\delta_n,b}} \delta_k^{-d/2} \pthb{ \delta_n^{1-\gamma h_p } \delta_k^h + 1 } \\
    &\leq c_1\, n^4 \brcB{ \delta_n^{1-\gamma h_p + (h-d/2)\wedge 0 } + \delta_n^{-d/2} },
  \end{align*}
  recalling that whenever $\Bi(\varsigma_{m-1},2\delta_k)\cap\Ti\brc{j\delta_n,\delta_{n\geq},h_\geq,\kappa_n'}\neq\vset$, $\ell^{j\delta_n}\pthb{\Bi(\varsigma_{m-1},2\delta_k)} \leq \kappa_n' \delta_k^h$.
  As a consequence, since $r_n=\delta_n^{(1-\epsilon)/2}$, we obtain by induction
  \begin{align*}
    Q_\Ti\pthb{\Ni(j,n,H_p) }
    &\leq c_1\,c_{1,m}^m m! \, n^{4m} r_n^{d(m-1)}\delta_n^{-(m-1)d/2} \brcB{ \delta_n^{1-\gamma h_p + (d/2\wedge h)} + 1 }^{m-1}\delta_n^{-1/(\gamma-1)} \\
    &\leq c_2\,c_{1,m}^m m!\, \brcB{ \delta_n^{1-\gamma h_p + (d/2\wedge h)} + 1 }^{m} \delta_n^{-md\epsilon-1/(\gamma-1)},
  \end{align*}
  where the term $n^{2}\delta_n^{-1/(\gamma-1)}$ comes from the last stage of the induction, summing over all $\Ti_{\sigma_{1}}\in\Tbb((j-1)\delta_n,\delta_n,H_p,\kappa_n)\cap\Ti\brc{j\delta_n,\delta_{n\geq},h_\geq,\kappa_n'}$.   Recalling $z_n =\pthb{ \delta_n^{1-\gamma h_p + (d/2\wedge h)} + 1 } \,\delta_n^{-2d\epsilon}$ and observing that $\binom{\ceil{z_n}}{m} \geq z_n^m \,m^{-m}$, we get
  \begin{align*}
    Q_\Ti\pthb{ \indi_{A(j,n,H_p)} }
    &\leq c_2\,c_{1,m}^m m^{2m} \, \delta_n^{md\epsilon-1/(\gamma-1)}.
  \end{align*}
  The parameter $m\geq 1$ can then be chosen sufficiently large to obtain $Q_\Ti\pthb{ \indi_{A(j,n,H_p)} } \leq c_2\,c_{1,m}^m m^{2m}\, \delta_n^{2}$. Finally, as we aim to obtain a uniform bound on $j\delta_n\in\ivoo{b^{-1},b}$ and $H_p\in\Hbb_n$,
  \begin{align*}
    \sum_{n\in\N} Q_\Ti\pthbb{ \bigcup_{j\delta_n\in\ivoo{b^{-1},b}} \bigcup_{H_p\in\Hbb_n} A(j,n,H_p) } \leq b c_2\,c_{1,m}^m m^{2m} \sum_{n\in\N} n \delta_n < \infty.
  \end{align*}
  Borel--Cantelli lemma then concludes the proof.
\end{proof}
Note that the previous lemma is inspired by the literature on multiparameter Gaussian processes, and in particular the seminal works of \citet{Kaufman-1969} and \citet{Khoshnevisan.Wu.ea-2006} on the uniform fractal geometry of Brownian sheet.

Based on the previous lemma, we may estimate the contribution to the local time $\ell^t\pthb{ \brc{ \sigma : \abs{W_{\sigma_0} - W_\sigma} < r_n }\cap \Ti\brc{t,2\delta_{n\geq},h_\geq,n^2} }$.
\begin{lemma}  \label{lemma:sbm_cluster_mass}
  Suppose $h\in\ivffb{0,\tfrac{1}{\gamma-1}}$. $\N_x$-a.e.
  for all integers $n$ large enough, any $j\delta_n\in\ivoo{b^{-1},b}$ and every $t\in\ivfo{j\delta_n,(j+1)\delta_n}$:
  \begin{align*}
    \forall \sigma_0\in\Ti(t);\quad\ell^t\pthb{ \brc{ \sigma : \abs{W_{\sigma_0} - W_\sigma} < r_n }\cap \Ti\brc{t,2\delta_{n\geq},h_\geq,n^2} } \leq c_0\,\delta_n^{(d/2\wedge h)-3\epsilon d}
  \end{align*}
  for some positive constant $c_0$ independent of $n$ and $j$.
\end{lemma}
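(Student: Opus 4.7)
The plan is to combine the dyadic decomposition displayed in \eqref{eq:sbm_sum_local_time0}--\eqref{eq:sbm_sum_local_time1} with the uniform cluster bound supplied by Lemma~\ref{lemma:sbm_clusters_ub}. For $t\in\ivfo{j\delta_n,(j+1)\delta_n}$, those estimates already show that the local time under consideration is bounded by $c_0\,n^2\sum_{H_p\in\Hbb_n}\delta_n^{h_p}\cdot\#\Vbb_p$, where $\Vbb_p$ is the subcollection of trees in $\Tbb((j-1)\delta_n,\delta_n,H_p,n^2)\cap\Ti\brc{j\delta_n,\delta_{n\geq},h_\geq,\kappa_n'}$ whose root image $W_{\varsigma}$ lies within distance $8r_n$ of $W_{\sigma_0}$. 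Lemma~\ref{lemma:sbm_clusters_ub} then applies with $\kappa_n=n^2$ and yields, for every $H_p\in\Hbb_n$, the uniform estimate $\#\Vbb_p\leq(\delta_n^{1-\gamma h_p+h^\star}+1)\delta_n^{-2\epsilon d}$, where $h^\star\eqdef d/2\wedge h$.

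The next step is to carry out the sum over $p\in\brc{1,\dots,n}$, splitting it according to the sign of $1-\gamma h_p+h^\star$. Denote the threshold index by $p_\star$, so that $h_{p_\star}$ is the largest element of $\brc{h_p}$ not exceeding $(1+h^\star)/\gamma$. In the regime $h_p\leq h_{p_\star}$ the first term of the cluster bound dominates, so the contribution of $H_p$ is at most $c\,n^2\,\delta_n^{1-(\gamma-1)h_p+h^\star-2\epsilon d}$. Since $h_p\geq 1/(n(\gamma-1))$, this exponent is at least $h^\star+1-\tfrac{1}{n}-2\epsilon d$, which is $\geq h^\star-3\epsilon d$ for $n$ large; this regime therefore contributes a quantity that is in fact much smaller than $\delta_n^{h^\star-3\epsilon d}$. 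In the complementary regime $h_p>h_{p_\star}$ the constant term $\delta_n^{-2\epsilon d}$ dominates and the contribution becomes $c\,n^2\,\delta_n^{h_p-2\epsilon d}$. The assumption $h\in\ivffb{0,\tfrac{1}{\gamma-1}}$ forces $(\gamma-1)h^\star\leq 1$, equivalently $(1+h^\star)/\gamma\geq h^\star$, which guarantees $h_p>h^\star$ and hence $\delta_n^{h_p}\leq\delta_n^{h^\star}$.

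Summing the $n$ terms of $\Hbb_n$ produces an extra factor $n$ which, together with the prefactor $n^2$, is absorbed into $\delta_n^{-\epsilon d}$ for all $n$ sufficiently large, yielding the announced bound $c_0\,\delta_n^{h^\star-3\epsilon d}$. The main delicate point in this plan is not analytic but bookkeeping: one has to verify that the case division is exhaustive, that the endpoint interval $H_n=\ivfob{h_{n-1},\infty}$ (for which the mass per subtree is effectively $\delta_n^{h_{n-1}}$) falls into the second regime, and that the constraint $h\leq\tfrac{1}{\gamma-1}$ is used precisely at the inequality $(1+h^\star)/\gamma\geq h^\star$ to prevent the bound from leaking below $\delta_n^{h^\star}$. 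All the genuine probabilistic content — the Gaussian cluster estimate relying on the local nondeterminism of $\Wi$ established in Lemma~\ref{lemma:sbm_lnd} — is already packaged into Lemma~\ref{lemma:sbm_clusters_ub}, so no further conditioning or branching argument should be required.
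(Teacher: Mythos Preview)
Your overall strategy is the right one, and it matches the paper's: plug the cluster bound of Lemma~\ref{lemma:sbm_clusters_ub} into the decomposition \eqref{eq:sbm_sum_local_time0}--\eqref{eq:sbm_sum_local_time1} and sum over $H_p\in\Hbb_n$. However, your case split contains a genuine error that breaks the argument.

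You split at the sign of $1-\gamma h_p+h^\star$ and claim that for $h_p\leq h_{p_\star}$ the first term $\delta_n^{1-\gamma h_p+h^\star}$ of the cluster bound dominates. This is backwards: when $1-\gamma h_p+h^\star\geq 0$ one has $\delta_n^{1-\gamma h_p+h^\star}\leq 1$, so it is the constant ``$+1$'' that dominates. The contribution of $H_p$ is therefore of order $n^2\,\delta_n^{h_p-2\epsilon d}$, \emph{not} $n^2\,\delta_n^{1-(\gamma-1)h_p+h^\star-2\epsilon d}$. For the smallest indices, say $p=1$ so $h_1=1/(n(\gamma-1))$, this gives a term of order $n^2\,\delta_n^{-2\epsilon d}$, which is far larger than the target $\delta_n^{h^\star-3\epsilon d}$. (The sentence ``Since $h_p\geq 1/(n(\gamma-1))$, this exponent is at least $h^\star+1-\tfrac{1}{n}-2\epsilon d$'' also has its inequality reversed: a \emph{lower} bound on $h_p$ gives an \emph{upper} bound on $1-(\gamma-1)h_p$.)

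The missing ingredient is exactly the observation the paper uses first: the intersection $\Tbb((j-1)\delta_n,\delta_n,H_p,n^2)\cap\Ti\brc{j\delta_n,\delta_{n\geq},h_\geq,\kappa'_n}$ is \emph{empty} whenever $\delta_n^{h_p}>4^{\gamma/(\gamma-1)}\delta_n^{h}$, i.e.\ roughly whenever $h_p<h$. Indeed, a subtree $\Ti_{\sigma'}\in\Tbb((j-1)\delta_n,\delta_n,H_p,n^2)$ carries local time exceeding $n^2\delta_n^{h_p}>\kappa'_n\delta_n^{h}$, which forces $\ell^{j\delta_n}(\Bi(\sigma,2\delta_n))>\kappa'_n\delta_n^{h}$ for any $\sigma\in\Ti_{\sigma'}\cap\Ti(j\delta_n)$, contradicting membership in $\Ti\brc{j\delta_n,\delta_{n\geq},h_\geq,\kappa'_n}$. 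Once the sum is restricted to $\widehat\Hbb_n=\brc{H_p:\delta_n^{h_p}\leq 4^{\gamma/(\gamma-1)}\delta_n^{h}}$, the problematic ``$+1$'' term contributes at most $c\,\delta_n^{h}\leq c\,\delta_n^{h^\star}$ per $H_p$, and the first term is handled as you do (using $h_p\leq 1/(\gamma-1)$ so that $1-(\gamma-1)h_p\geq 0$). The factor $n^3$ from summing is then absorbed into $\delta_n^{-\epsilon d}$.
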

\begin{proof}
  Set $h\in\ivffb{0,\tfrac{1}{\gamma-1}}$. For any integer $n$ large enough and $j\delta_n\in\ivoo{b^{-1},b}$, observe first that
  \begin{align*}
    \Tbb((j-1)\delta_n,\delta_n,H_p,n^2)\cap \Ti\brc{j\delta_n,2\delta_{n\geq},h_\geq,\kappa'_n} = \vset \quad\text{whenever}\quad \delta_n^{h_{p}} > 4^{\gamma/(\gamma-1)} \delta_n^h,
  \end{align*}
  where we recall $\kappa'_n=4^{\gamma/(\gamma-1)}n^2$.
  Hence, according to the bound presented in Lemma~\ref{lemma:sbm_clusters_ub} and bounds \eqref{eq:sbm_sum_local_time0} and \eqref{eq:sbm_sum_local_time1}, for any $t\in\ivff{j\delta_n,(j+1)\delta_n}$ and any $\sigma_0\in\Ti(t)$:
  \begin{align*}
    \ell^t\pthb{ \brc{ \sigma : \abs{W_{\sigma_0} - W_\sigma} < r_n }\cap \Ti\brc{t,2\delta_{n\geq},h_\geq,n^2} }
    &\leq c_0\,n^2 \sum_{H_p\in\widehat\Hbb_n} \delta_n^{h_p} \pthb{ \delta_n^{1-\gamma h_p + (d/2\wedge h)} + 1 } \,\delta_n^{-2\epsilon d},
  \end{align*}
  where  $\widehat\Hbb_n=\brcb{H_p\in\Hbb_n : \delta_n^{h_{p}} \leq 4^{\gamma/(\gamma-1)} \delta_n^h}$. As a consequence, since $1-(\gamma-1) h_p\geq 0$,
  \begin{align*}
    \ell^t\pthb{ \brc{ \sigma : \abs{W_{\sigma_0} - W_\sigma} < r_n }\cap \Ti\brc{t,2\delta_{n\geq},h_\geq,n^2} }
    &\leq c_1\pthb{ \delta_n^{d/2\wedge h} + \delta_n^{h} } \,\delta_n^{-3\epsilon d} \leq 2c_1\,\delta_n^{d/2\wedge h-3\epsilon d}.
  \end{align*}
\end{proof}

From the previous estimates, we may finally deduce  the upper bounds on dimensions that appear in
Proposition~\ref{prop:spectrum_excursions}.
\begin{lemma}  \label{lemma:sbm_spectrum_ub}
  $\N_x$-a.e. for any Borel set $G\subset\ivoo{0,\infty}$:
  \begin{align*}
    \forall h\in\ivffb{\tfrac{1}{\gamma},\tfrac{1}{\gamma-1}}\cap\ivfob{0,\tfrac{d}{2}};\quad \dimH \bigcup_{s\in G} F(2h,\Xii_s) \leq 2\gamma h - 2 + 2\dimP G,
  \end{align*}
  and in particular, $\dimH \,F(2h,\Xii_t) \leq 2\gamma h - 2$. Moreover,
  \begin{align*}
    \forall h\in\ivfob{0,\tfrac{1}{\gamma}}\cap\ivfob{0,\tfrac{d}{2}};\quad \dimH \brcb{t>0 : F(2h,\Xii_t) \neq \vset } \leq \gamma h.
  \end{align*}
  Finally, assuming $d\geq 2$, for any Borel set $G\subset\ivoo{0,\infty}$, $\N_x$-a.e.
  \begin{align*}
    \inf_{x\in\R^d} \inf_{s\in G} \alpha_{\Xii_s}(x) = \frac{2-2\dimP G\cap\ivoo{0,h(\Ti)}}{\gamma}.
  \end{align*}
\end{lemma}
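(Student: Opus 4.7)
The overall strategy lifts the multifractal geometry of the stable tree $\Ti$ established in~\cite{Balanca-2015b} to the measure $\Xii_s$, combining three ingredients: the crude inclusion $W(F(h,\ell^s))\subseteq F(2h,\Xii_s)$ coming from Lemma~\ref{lemma:sbm_bound_holder}; the quantitative Gaussian estimate of Lemma~\ref{lemma:sbm_cluster_mass}, which plays the role of a converse on the ``regular'' portion of $\Ti(s)$; and the $(\tfrac12-\epsilon)$-Hölder continuity of the Brownian snake $W$, which inflates Hausdorff dimensions by a factor at most $2+O(\epsilon)$ when passing from $\Ti$ to $\R^d$. By a scaling/restriction argument one may suppose that the relevant level is contained in a fixed interval $\ivoo{b^{-1},b}$, so that Lemma~\ref{lemma:sbm_cluster_mass} applies.

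The pivotal step is the essential containment, valid modulo a negligible set:
\begin{align*}
  \bigcup_{s\in G} F(2h,\Xii_s) \subseteq W\biggl( \bigcup_{s\in G} F(h+\epsilon,\ell^s) \biggr), \qquad \forall\epsilon>0.
\end{align*}
To prove it, fix $x \in F(2h,\Xii_s)$ for some $s\in G$; by definition $\Xii_s(B(x,r_n)) \geq r_n^{2h+\eta}$ for infinitely many $n$ and every $\eta>0$. Picking any $\sigma_0\in\Ti(s)$ with $W_{\sigma_0}=x$, the defining integral splits as
\begin{align*}
  \Xii_s\pthb{B(x,r_n)} = \ell^s\pthb{ \brc{\sigma:|W_\sigma-x|<r_n}\cap \Ti\brc{s,2\delta_{n\geq},(h+\epsilon)_\geq,n^2} } + R_n,
\end{align*}
where $R_n$ is the complementary ``bad'' mass. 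Lemma~\ref{lemma:sbm_cluster_mass} bounds the first term by $c_0\,\delta_n^{(d/2\wedge(h+\epsilon))-3\epsilon d}$, which is $o\pthb{r_n^{2h+\eta}}$ for $\epsilon,\eta$ small enough since $h<d/2$. Hence $R_n$ must dominate, forcing the existence of $\sigma\in\Ti(s)$ with $|W_\sigma-x|<r_n$ and $\sigma\notin\Ti\brc{s,2\delta_{n\geq},(h+\epsilon)_\geq,n^2}$. A compactness argument on $\Ti(s)$ combined with the continuity of $W$ produces a limit $\sigma$ with $W_\sigma=x$ and $\alpha_\ell(\sigma,\Ti)\leq h+\epsilon$, yielding $x\in W(F(h+\epsilon,\ell^s))$.

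With the containment in hand, Parts~(a) and~(b) reduce to tree-side multifractal statements proved in~\cite{Balanca-2015b}. For Part~(a) one has $\dim_H\bigcup_{s\in G} F(h+\epsilon,\ell^s) \leq \gamma(h+\epsilon)-1+\dim_P G$; applying the $(\tfrac12-\epsilon')$-Hölder regularity of $W$ inflates the bound by $1/(\tfrac12-\epsilon')\to 2$, and sending $\epsilon,\epsilon'\to 0$ yields the target $2\gamma h-2+2\dim_P G$. Part~(b) treats the ``rare-times'' regime $h<1/\gamma$: the tree statement $\dim_H\brcb{t:F(h+\epsilon,\ell^t)\neq\vset}\leq\gamma h$ transfers directly, since no Hölder inflation is needed when one measures the time parameter rather than a subset of $\Ti$. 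For Part~(c), the ``$\geq$'' direction (the appropriate content of an upper-bound section; the reverse inequality is treated among the lower-bound estimates) is a consequence of Part~(b): if $\alpha_{\Xii_s}(x)$ were strictly smaller than $(2-2\dim_P(G\cap\ivoo{0,h(\Ti)}))/\gamma$ at some $(x,s)$, then for $h$ slightly above that value the time-set $\brcb{t:F(2h,\Xii_t)\neq\vset}$ would meet $G\cap\ivoo{0,h(\Ti)}$ despite having strictly smaller Hausdorff dimension than the packing dimension of the latter, a contradiction via a standard covering argument.

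The principal technical obstacle is the pivotal containment of the second paragraph: the bound of Lemma~\ref{lemma:sbm_cluster_mass} must genuinely dominate the regular contribution, which occurs only when $h<d/2$. For $h\geq d/2$ the regular mass alone could already exhaust $r_n^{2h+\eta}$, consistent with the hypothesis $h\in\ivfob{0,d/2}$ in the statement. The appearance of $\dim_P G$ (rather than $\dim_H G$) in Part~(a) is dictated by the covering of $G$ by intervals at each dyadic scale $\delta_n$ used to derive the uniform-in-$s$ estimate, paired with the discretization $j\delta_n\in\ivoo{b^{-1},b}$ inherited from Lemma~\ref{lemma:sbm_cluster_mass}.
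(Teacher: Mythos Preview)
Your pivotal containment $\bigcup_{s\in G} F(2h,\Xii_s)\subseteq W\bigl(\bigcup_{s\in G} F(h+\epsilon,\ell^s)\bigr)$ is where the argument breaks. The compactness step does not go through: from $R_n>0$ you extract $\sigma_n\in\Ti(s)$ with $\abs{W_{\sigma_n}-x}<r_n$ and some scale $\rho_n\in\ivff{2\delta_n,(\bk{\ell^s}/n^2)^{1/(h+\epsilon)}}$ at which $\ell^s(\Bi(\sigma_n,2\rho_n))>n^2\rho_n^{h+\epsilon}$. Passing to a convergent subsequence $\sigma_n\to\sigma^\star$ gives $W_{\sigma^\star}=x$, but you have no control of $d(\sigma_n,\sigma^\star)$ relative to $\rho_n$; the large local-time ball $\Bi(\sigma_n,2\rho_n)$ need not sit inside any small ball around $\sigma^\star$, so nothing forces $\alpha_\ell(\sigma^\star,\Ti)\leq h+\epsilon$. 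The paper avoids this entirely: it does \emph{not} try to realise $F(2h,\Xii_t)$ as the $W$-image of a tree Hölder set, but instead builds a limsup cover $V(t,h)\subset\R^d$ out of balls $B(W_\sigma,r_k)$ indexed by tree points with $\ell^t(\Bi(\sigma,2\delta_k))>\delta_k^h$, proves $F(2h-c\epsilon,\Xii_t)\subset V(t,h)$ via Lemma~\ref{lemma:sbm_cluster_mass}, and then \emph{counts} these balls directly using Lemma~\ref{lemma:unif_bound_Th_cor}. No Hölder-inflation of a tree dimension is needed, and no inverse containment is claimed.

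Your Part~(c) argument is also incorrect. Knowing $\dimH\brcb{t:F(2h,\Xii_t)\neq\vset}\leq\gamma h<1-\dimP G'$ does \emph{not} preclude this set from meeting $G'$: a set of small Hausdorff dimension can certainly contain any prescribed point (take $G'=\brc{t_0}$). The paper instead invokes the tree-side statement \cite[Lemma~4.6]{Balanca-2015b} that, for a \emph{fixed} $G$, almost surely $F(h,\ell)\cap\Ti(G)=\vset$ whenever $h<\tfrac{1-\dimP(G\cap\ivoo{0,h(\Ti)})}{\gamma}$; combined with the same cover $V(t,h)$ this yields the lower bound on $\inf_{x,s}\alpha_{\Xii_s}(x)$.
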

\begin{proof}
  Let us first note that by a standard regularisation argument, it is sufficient to prove the upper bound replacing the packing dimension by the upper box dimension of $G$ (we refer for instance to \cite[Lemma 4.5]{Balanca-2015b} for the details of this argument).

  The proof of the upper bound of the multifractal spectrum is divided in two main steps: we first start by constructing a proper cover of $F(2h,\Xii_t)$ using the previous lemmas, and then we deduce a bound on the Hausdorff dimension. For that purpose, let us set $\epsilon>0$, $h\in\ivffb{4\epsilon d,\tfrac{1}{\gamma-1}}\cap\ivfo{0,\tfrac{d}{2}}$ and $t\in\ivoo{b^{-1},b}$.
  For every $n\in\N$, we define:
  \begin{align}  \label{eq:def_cover_Gth}
    V(t,\delta_n,h) \eqdef \bigcup_{\delta_k\in\ivff{\delta_n,2b}} \ \ \bigcup_{\sigma\in\Ti(t):\ell^t(\Bi(\sigma,2\delta_k) ) > n^2\delta_k^h } B(W_\sigma,r_k),
  \end{align}
  where $r_k=\delta_k^{(1-\epsilon)/2}$. We also introduce the limit set $V(t,h) = \limsup_{n\rightarrow\infty} V(t,\delta_n,h)$. Suppose $\sigma_0\in\Ti(t)$ is such that $W_{\sigma_0}\notin V(t,h)$, implying the existence of $N\in\N$ such that for all integers $n\geq N$:
  \begin{align*}
    \forall\delta_k\in\ivff{\delta_n,b},\ \forall \sigma\in\Ti(t) : W_\sigma\in B(W_{\sigma_0},r_k);\quad \ell^t(\Bi(\sigma,2\delta_k) ) \leq n^2\delta_k^h.
  \end{align*}
  As a consequence, for every integer $n\geq N$ and any $\sigma\in\Ti(t)$ such that $W_\sigma\in B(W_{\sigma_0},r_n)$, we get $\sigma\in\Ti\brc{t,2\delta_{n\geq},h_\geq,n^2}$.
  Therefore, using in addition the estimate obtained in Lemma~\ref{lemma:sbm_cluster_mass}, we have for all $n\geq N$:
  \begin{align*}
     \Xii_t\pthb{ B(W_{\sigma_0},r_n) } = \ell^t\pthb{\brc{\sigma :\abs{ W_{\sigma_0}-W_\sigma}<r_n}\cap \Ti\brc{t,2\delta_{n\geq},h_\geq,n^2}}
     \leq c_0\,\delta_n^{h-3\epsilon d}
     \leq c_0\, r_n^{2h-6\epsilon d},
  \end{align*}
  implying
 that $W_{\sigma_0}\notin F(2h-8\epsilon d,\Xii_t)$, and thus the desired covering property: $F(2h-8\epsilon d,\Xii_t) \subset V(t,h)$. \vsp

  Hence, we may now focus on obtaining a proper bound on the Hausdorff dimension of $V(t,h)$. For that purpose, let us now restrict to the situation $h\in\ivffb{\tfrac{1}{\gamma},\tfrac{1}{\gamma-1}}\cap\ivfo{0,\tfrac{d}{2}}$. For any $\sigma\in V(t,h)$, $n$-infinitely often, there exists $\delta_k\in\ivff{\delta_n,2b}$ such that $\ell^t(\Bi(\sigma,2\delta_k) ) > n^2\delta_k^h$. Nevertheless, since $\bk{\ell^t}<\infty$, one must have $\delta_k\leq \pthb{\bk{\ell^t}n^{-2}}^{1/h}\rightarrow_{n\rightarrow\infty} 0$. Hence, according to definition~\eqref{eq:def_cover_Gth}, for every $n\in\N$,
  \begin{align*}
    V(t,\delta_n,h) \subset \bigcup_{\delta_k \leq \pth{\bk{\ell^t}n^{-2}}^{1/h}} \ \ \bigcup_{\sigma\in\Ti(t):\ell^t(\Bi(\sigma,2\delta_k) ) > \delta_k^h } B(W_\sigma,r_k).
  \end{align*}
  Consequently, noting that the second union does not depend any more on $n\in\N$ and using in addition Definition~\ref{def:collections_Th2}, we obtain:
  \begin{align}  \label{eq:cover_Gth}
    V(t,h)
    &\subset\varlimsup_{n\rightarrow\infty} \bigcup_{\sigma\in\Ti(t):\ell^t(\Bi(\sigma,2\delta_n) ) > \delta_n^h } B(W_\sigma,r_n)
    &\subset \varlimsup_{n\rightarrow\infty} \bigcup_{ \Ti_{\sigma'}\in\Tbb\pth{(j-1)\delta_n,\delta_n,h_<,1} } B(W_{\sigma'},3r_n),
  \end{align}
  where $j\geq 1$ is such that $t\in\ivfo{j\delta_n,(j+1)\delta_n}$ and observing that for any $\sigma\in\Ti_{\sigma'}\cap\Ti(t)$, $\abs{W_{\sigma}-W_{\sigma'}}\leq 2r_n$. Then, recall that Lemma~\ref{lemma:unif_bound_Th_cor} provides a tight bound on the size of $\Tbb\pth{(j-1)\delta_n,\delta_n,h_<,1}$, uniformly in $j\delta_n\in\ivoo{0,b}$: for all $n$ large enough and every $j\delta_n\in\ivoo{0,b}$:
  \begin{align}  \label{eq:bound_cover}
    \#\Tbb\pth{(j-1)\delta_n,\delta_n,h_<,1} \leq c_0\,\delta_n^{1-\gamma h} \brcB{\sup_{u\geq 0} \,\bk{\ell^u}+1} \leq c_0\,r_n^{2-2\gamma h} \brcB{\sup_{u\geq 0} \,\bk{\ell^u}+1}.
  \end{align}

  Assuming $G\subset\ivoo{b^{-1},b}$ is a Borel set, we denote by $\Ji(n)$ the collection $n$-dyadic intervals necessary to cover the former. If $s=\dimBu G$, we know that for every $n\in\N$ sufficiently large, $\#\Ji(n)\leq 2^{n(s+\epsilon)}$. As a consequence, using the previous covering \eqref{eq:cover_Gth} of $V(t,h)$ and the bound \eqref{eq:bound_cover}, one easily deduces as well a proper covering of the set $\cup_{s\in G} V(s,h)$, therefore entailing the desired bound:
  \begin{align*}
    \dimH \bigcup_{s\in G} F(2h-8\epsilon d,\Xii_s) \leq 2\gamma h - 2 + 2\dimBu G + 2\epsilon.
  \end{align*}
  The limit $\epsilon\rightarrow 0$ and $b\rightarrow\infty$ then concludes the first part of the proof.\vsp

  Fix again arbitrary $\epsilon>0$ and $h\in\ivff{4\epsilon d,\tfrac{d}{2}\wedge\tfrac{1}{\gamma}}$.
For  every $n\in\N$, let us introduce the following r.v.
  \begin{align*}
    N(n) \eqdef \#\brcb{j\delta_n\in\ivoo{0,b} : \Tbb\pth{(j-1)\delta_n,\delta_n,h_<,1} \neq\vset }.
  \end{align*}
  Then, according to the proof of \cite[Lemma 4.6]{Balanca-2015b}, for any $\epsilon>0$ and every $n$ sufficiently large: $N(n) \leq \delta_n^{-\gamma h - 3\epsilon}$. In particular, based on the covering property~\eqref{eq:cover_Gth} of $V(t,h)$, we get:
  \begin{align*}
    \dimH \brcb{t>0 : F(2h-8\epsilon d,\Xii_t) \neq \vset } \leq \gamma h-3\epsilon,
  \end{align*}
  which entails the desired bound as $\epsilon\rightarrow 0$.\vsp

  Finally, we may consider the last equality for a fixed Borel set $G\subset\ivoo{b^{-1},b}$. The lower bound inequality
  \begin{align*}
    \inf_{x\in\R^d} \inf_{s\in G} \alpha_{\Xii_s}(x) \leq \frac{2-2\dimP G\cap\ivoo{0,h(\Ti)}}{\gamma}.
  \end{align*}
  is trivial using Lemma~\ref{lemma:sbm_bound_holder} and the same result \cite[Th. 5]{Balanca-2015b} on stable trees. To obtain the other side inequality, we note that the assumption $d\geq 2$ ensures that $\inf_{x\in\R^d} \inf_{s\in G} \alpha_{\Xii_s}(x) < d$. Then, the former is a consequence of the cover \eqref{eq:cover_Gth}  and \cite[Lemma 4.6]{Balanca-2015b} which proves that $F(h,\ell)\cap\Ti(G)$ is empty whenever $h<\tfrac{1-\dimP G\cap\ivoo{0,h(\Ti)}}{\gamma}$.
\end{proof}

\subsection{Lower bound estimates}  \label{ssec:sbm_spectrum_lb}

The lower bound on the multifractal spectrum of super-Brownian motion will also make use of the previous work \cite{Balanca-2015b} investigating the multifractal structure of stable trees. Consequently, we will briefly recall a few elements and notations introduced in \cite{Balanca-2015b}. For that purpose, let us fix in this section a closed interval $\Hi\subset\ivof{\tfrac{1}{\gamma},\tfrac{1}{\gamma-1}}$ and $\epsilon>0$. Then, according to \cite[Lemma 4.21]{Balanca-2015b}, $\Nb(\dt\Ti)$-a.e. for every $h\in\Hi$ and any $t\in\ivoo{\epsilon,h(\Ti)-\epsilon}$, there exist $G(t,h)\subset \Ti(t)$ and a probability measure $\mu_{t,h}(\dt\sigma)$ supported by $G(t,h)$ such that for any $\sigma\in G(t,h)$, $\alpha_\ell(\sigma,\Ti) \leq h$, and
\begin{align}  \label{eq:mass_pple_spectrum1}
  \forall r>0;\quad \mu_{t,h}\pthb{ \Bi(\sigma,r) } \leq r^{\gamma h-1-\eps(r)} \log(1/r)^\eta
\end{align}
where $\eta>0$ is independent of $t$ and $h$, and $\eps(\cdot)$ is a positive non-decreasing function satisfying $\lim_{\eps\rightarrow 0} \eps(r) = 0$.

As a natural way to obtain a lower bound on the multifractal spectrum of stable super-Brownian motion, we will prove that the pushforward measure $W_\star\pth{ \mu_{t,h} }$ satisfies as well a proper mass distribution principle. More specifically, we define for every $h\in\Hi$, the measure $\nu_{t,2h}(\dt x)$:
\begin{align}  \label{eq:pushforward_measure}
  \forall V\in\Bi(\R^d);\quad \nu_{t,2h}(V) \eqdef \mu_{t,h}(W^{-1}(V)).
\end{align}
Owing to Lemma~\ref{lemma:sbm_bound_holder}, we already know that $\supp\nu_{t,2h} \subset W(F(h,\ell^t)) \subset F(2h,\Xii_t)$. Consequently, using as well the upper bound on the spectrum obtained in Lemma~\ref{lemma:sbm_spectrum_ub}, it only remains to prove that $\nu_{t,2h}$ satisfies a proper mass distribution principle. We will adopt a strategy similar to Lemma~\ref{lemma:sbm_clusters_ub} and, using the local nondeterminism property (Lemma \ref{lemma:sbm_lnd}), estimate properly the accumulation phenomena that might appear on $\nu_{t,2h}(\dt x)$.\vsp

For that purpose, we need to recall a few more elements concerning the construction of the collection of measures $\mu_{t,h}(\dt\sigma)$. To begin with, let $(\rho_n)_{n\in\N}$ be a fast decreasing sequence to zero such that $\rho_n = 2^{-\rho_{n-1}^{-1}}$ and $(\Hi_n)_{n\in\N}$ denote the dyadic-like approximations of elements in $\Hi$. Then, as presented in \cite{Balanca-2015b}, for every $n\in\N$, $j\rho_n\in\ivoo{\epsilon,h(\Ti)-\epsilon}$ and $h\in\Hi_n$, there exists a non-empty collection of subtrees $\Vbb_{n}(j,h)\subset\Tbb((j-1)\rho_{n},\rho_{n})$ such that
\begin{align*}
  \forall \Ti_\sigma\in\Vbb_{n}(j,h);\quad \inf_{u\in\ivff{\rho_{n}/2,2\rho_{n}}}\bk{\ell^u}(\Ti_\sigma) \geq g(\rho_{n})^{-\alpha}\rho_{n}^h,
\end{align*}
where $\alpha>1$ is a fixed positive real. In addition, we also set $\Vbb_{n}(h) \eqdef \cup_{j\rho_{n}\in\ivoo{\epsilon,h(\Ti)-\epsilon}} \Vbb_{n}(j,h)$. The construction presented in \cite{Balanca-2015b} then ensures that the collections $(\Vbb_{n}(h))_{n\in\N,h\in\Hi_n}$ are nested, allowing to define $G(t,h)$ as following:
\begin{align*}
  G(t,h) \eqdef \bigcap_{n\in\N} G(t,h,n)\quad \text{where }\  G(t,h,n) \eqdef \bigcup_{\Ti_\sigma\in \Vbb_n(k_n,h_n)} \Ti_\sigma\cap \Ti(t),
\end{align*}
and the sequences $(k_n)_{n\in\N}$ and $(h_n)_{n\in\N}$ are such that $t\in\ivfo{k_n\rho_n,(k_n+1)\rho_n}$ and $h_n\rightarrow h$. The nested structure of the collections $(\Vbb_{n}(h))_{n\in\N,h\in\Hi_n}$ allows to construct $\mu_{t,h}(\dt\sigma)$ in a Cantor-like fashion: starting with $\mu_{t,h,0} = \ell^t$, ones define $\mu_{t,h,n+1}$ by spreading the mass of $\mu_{t,h,n}$ ``uniformly'' on the set $G(t,h,n+1)$ (we refer to \cite[Lemma 4.21]{Balanca-2015b} for the precise description of the construction). The Cantor-like structure ensures the convergence to a finite measure $\mu_{t,h}$ supported by $G(t,h)$ and Lemma~4.21 in \cite{Balanca-2015b} then proves a proper mass distribution principle \eqref{eq:mass_pple_spectrum1} on the former.

In order to prove an equivalent property on the measures $\nu_{t,2h}$, we need a more precise description of the properties the collection $(\Vbb_{n}(h))_{n\in\N,h\in\Hi_n}$. We provide for that purpose in the next lemma a construction of a dyadic-like collection of nodes related to the former.
\begin{lemma}  \label{lemma:trees_suitable_dyadics}
  Suppose $\Hi\subset\ivofb{\tfrac{1}{\gamma},\tfrac{1}{\gamma-1}}$ is a fixed closed interval and $\epsilon>0$. $\Nb(\dt \Ti)$-a.e., for every $n$ large enough, any $\delta_k\in\ivfo{\rho_{n},\rho_{n-1}}$, $u\in\Di_k=\brcb{ \ivff{j\delta_k,(j+1)\delta_k} : j\in\Z}$ and $h\in\Hi_n$, there exists a collection of nodes $\Vi_n(u,\delta_k,h)\subset\Ti(u)$ satisfying the following properties:
  \begin{enumerate}[(i)]
    \item for every $j\rho_n\in\ivoo{\epsilon,h(\Ti)-\epsilon}$, there exists $u\in\Di_k\cap\ivof{0,j\rho_n}$ such that
    \begin{align*}
      \forall \Ti_\sigma\in\Vbb_n(j,h), \ \exists \sigma'\in\Vi_n(u,\delta_k,h);\quad d(\sigma,\sigma') \leq 4\delta_k;
    \end{align*}
    \item for every $u\in\Di_k$, $h\in\Hi_n$ and any $\sigma\in\Vi_n(u,\delta_k,h)$,
    \begin{align*}
      \#\pthb{ \Bi(\sigma,2r) \cap \Vi_n(u,\delta_k,h) } \leq c_0
      \begin{cases}
        1 + \pthb{r\vartheta_n^{-1}}^{\tfrac{1}{\gamma-1}} g(r)^{-\beta}  &\text{if } \delta_k\in\ivfo{\rho_n,\vartheta_n}
        \\ &\text{and }r\geq\delta_k; \\
        \pthb{r\delta_k^{-1}}^{\tfrac{1}{\gamma-1}} g(\delta_k)^{-\beta} &\text{if } \delta_k\in\ivfo{\vartheta_{n},\rho_{n-1}} \\
        & \text{and } r\in\ivfo{\delta_k,\rho_{n-1}}; \\
        \pthb{\rho_{n-1}\delta_k^{-1}}^{\tfrac{1}{\gamma-1}} g(\delta_k)^{-\beta}\pthB{1+\pthb{r\vartheta_{n-1}^{-1}}^{\tfrac{1}{\gamma-1}} } & \text{if } \delta_k\in\ivfo{\vartheta_{n},\rho_{n-1}} \\
         &\text{and } r\geq\rho_{n-1}, \\
      \end{cases}
    \end{align*}
    where the constants $\beta$ and $c_0$ are independent of the parameters $n$, $k$ and $h$, and $\vartheta_n \eqdef \rho_n^{(\gamma-1)(\gamma h -1 )} g(\rho_n)^{-\alpha\gamma(\gamma-1)} \geq \rho_n$.
  \end{enumerate}
  Finally, we will simply denote by $\Vi_n(\delta_k,h)$ the full collection $\cup_{u\in\Di_k} \Vi_n(u,\delta_k,h)$.
\end{lemma}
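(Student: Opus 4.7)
The plan is to take $\Vi_n(u,\delta_k,h)$ to be the collection of level-$u$ ancestors of the roots of the subtrees in $\Vbb_n(h)$ whose base level $(j-1)\rho_n$ falls in the dyadic slab $\ivfob{u,u+\delta_k}$; that is,
\[
\Vi_n(u,\delta_k,h) \eqdef \brcB{ \iivff{\rho,\sigma}\cap\Ti(u) : \Ti_\sigma\in\Vbb_n(j,h) \text{ for some } j \text{ with } (j-1)\rho_n\in\ivfob{u,u+\delta_k} }.
\]
Property~(i) will then follow directly by choosing $u(j)\eqdef\floor{(j-1)\rho_n/\delta_k}\,\delta_k\in\Di_k\cap\ivof{0,j\rho_n}$, since the distance from any $\sigma\in\Ti((j-1)\rho_n)$ to its ancestor at level $u(j)$ is at most $(j-1)\rho_n-u(j)<\delta_k\le 4\delta_k$.

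The heart of the argument is the counting estimate in~(ii). I will fix $\sigma_0\in\Vi_n(u,\delta_k,h)$ and $r\ge\delta_k$, set $J_u \eqdef \brc{j:(j-1)\rho_n\in\ivfob{u,u+\delta_k}}$ (so that $\#J_u\le\delta_k/\rho_n+1$), and decompose $\Vi_n(u,\delta_k,h)=\bigcup_{j\in J_u}\Vi_n^{(j)}$ according to which $j$ each ancestor comes from. Every $\sigma'\in\Vi_n^{(j)}\cap\Bi(\sigma_0,2r)$ has a descendant $\Ti_{\sigma''}\in\Vbb_n(j,h)$ with $\bk{\ell^{s}}(\Ti_{\sigma''})\geq g(\rho_n)^{-\alpha}\rho_n^{h}$ for $s\in\ivff{(j-1)\rho_n+\rho_n/2,(j-1)\rho_n+2\rho_n}$; summing these contributions at $s_j=j\rho_n$ over the disjoint subtrees yields
\[
g(\rho_n)^{-\alpha}\rho_n^{h}\cdot\#\pthb{\Vi_n^{(j)}\cap\Bi(\sigma_0,2r)} \leq \ell^{s_j}\pthb{\Bi(\sigma_0,2r+\delta_k+2\rho_n)}.
\]
The right-hand side will then be controlled via \eqref{eq:unif_bound_localtime_Th}: a preliminary verification should show that $\sigma_0$ itself lies in a class $\Ti\brc{u,\delta_{k\geq},h_\geq,\kappa}$ with $\kappa$ only polylogarithmic in $n$, so that the local time of a ball of radius $3r$ at level $s_j$ is bounded by $c_0\,r^{1/(\gamma-1)}g(r)^{-\beta}$. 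Summing over $j\in J_u$ and unwinding the definition $\vartheta_n=\rho_n^{(\gamma-1)(\gamma h-1)}g(\rho_n)^{-\alpha\gamma(\gamma-1)}$ then recovers the first case of~(ii), with the additive~$1$ absorbing the trivial contribution $\sigma'=\sigma_0$ in the regime $r<\vartheta_n$.

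For the remaining cases $\delta_k\in\ivfo{\vartheta_n,\rho_{n-1}}$, the per-$j$ bound above is no longer sharp, since on scale $\delta_k\ge\vartheta_n$ distinct elements of $\Vbb_n(h)$ typically already coalesce at level $u$. Instead I plan to apply Lemma~\ref{lemma:unif_bound_Th} directly at the coarser scale $\delta_k$ (respectively $\rho_{n-1}$), bounding $\#(\Vi_n(u,\delta_k,h)\cap\Bi(\sigma_0,2r))$ by the number of subtrees of $\Tbb(u,\delta_k)$ carrying local mass at least $g(\rho_n)^{-\alpha}\rho_n^h$ at some level in $\ivff{u+\rho_n/2,u+\delta_k+2\rho_n}$; this produces the $(r/\delta_k)^{1/(\gamma-1)}g(\delta_k)^{-\beta}$ factor for $r\in\ivfo{\delta_k,\rho_{n-1}}$, and an additional $(r/\vartheta_{n-1})^{1/(\gamma-1)}$ layer for $r\ge\rho_{n-1}$ by a further coarsening from $\rho_{n-1}$ down to $\vartheta_{n-1}$. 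The almost-sure statement uniform over $n$, $\delta_k$, $u\in\Di_k$ and $h\in\Hi_n$ will follow from a union bound combined with the Borel--Cantelli tail estimates already developed in Lemmas~\ref{lemma:unif_bound_Th} and~\ref{lemma:inf_local_time}, since $\#\Hi_n$ and $\#\Di_k\cap\ivoo{\epsilon,h(\Ti)-\epsilon}$ are only polynomial. The main obstacle will be tracking the logarithmic corrections consistently across the three regimes so that a single exponent $\beta$ works uniformly in $(n,k,h)$, together with the preliminary structural claim that $\Vi_n(u,\delta_k,h)\subset\Ti\brc{u,\delta_{k\geq},h_\geq,\kappa}$ for a common $\kappa$, without which the local-time bound underlying the first case cannot be invoked cleanly.
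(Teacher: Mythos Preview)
Your definition of $\Vi_n(u,\delta_k,h)$ is essentially the paper's (up to a cosmetic shift in the dyadic slab), and property~(i) indeed follows as you say. The paper also adds an equivalence-class step, identifying ancestors that fall in the same element of $\Tbb((m-1)\delta_k,\delta_k)$, so that any two distinct elements of $\Vi_n$ sit at distance $\ge 2\delta_k$; this is not in the lemma statement but is used later in Lemma~\ref{lemma:sbm_clusters_lb1}.

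The gap is in your route to~(ii). The per-$j$ inequality $g(\rho_n)^{-\alpha}\rho_n^{h}\cdot\#(\Vi_n^{(j)}\cap\Bi(\sigma_0,2r))\le\ell^{s_j}(\Bi(\sigma_0,3r))$ is fine, but once you sum it over $j\in J_u$ you pick up the factor $\#J_u\asymp\delta_k/\rho_n$, and this does \emph{not} unwind to the target bound. Concretely, for $\delta_k\in\ivfo{\rho_n,\vartheta_n}$ and $r\ge\vartheta_n$, assuming the local-time upper bound $\ell^{s_j}(\Bi(\sigma_0,3r))\lesssim r^{1/(\gamma-1)}$ you are hoping for, your sum gives
\[
\#\bigl(\Vi_n\cap\Bi(\sigma_0,2r)\bigr)\ \lesssim\ \frac{\delta_k}{\rho_n}\cdot\frac{r^{1/(\gamma-1)}}{\rho_n^{h}},
\]
whereas the claim is $(r/\vartheta_n)^{1/(\gamma-1)}=r^{1/(\gamma-1)}\rho_n^{1-\gamma h}$ up to logs. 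The ratio of your bound to the target is $\delta_k\,\rho_n^{(\gamma-1)h-2}$, which for any $h<\tfrac{1}{\gamma-1}$ diverges (already $\ge\delta_k/\rho_n$), so the argument fails away from the endpoint. The ``additive $1$'' likewise does not absorb the $\delta_k/\rho_n$ many $j$'s when $r<\vartheta_n$.

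What the paper actually uses for~(ii) is not the generic Lemmas~\ref{lemma:unif_bound_Th}--\ref{lemma:inf_local_time}, but the fine structural output of the construction in \cite{Balanca-2015b} (Lemmas~4.16--4.21 there): first, an \emph{isolation} property guaranteeing $\#(\Bi(\sigma,2r)\cap\Vi_n)=1$ for all $r\le\vartheta_n$, which is exactly what kills the $\delta_k/\rho_n$ overcount; second, a \emph{two-sided} control $\ell^u(\Bi(\sigma,2r))\in[\underline r(r),\overline r(r)]\asymp r^{1/(\gamma-1)}$ for every $\sigma\in\Vi_n$ and $r\in[\vartheta_n,\rho_{n-1}]$, together with the lower bound $\ell^u(\Bi(\sigma,2\vartheta_n))\gtrsim\vartheta_n^{1/(\gamma-1)}$. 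Dividing the upper bound at scale $r$ by the lower bound at scale $\vartheta_n$ yields $(r/\vartheta_n)^{1/(\gamma-1)}$ directly, with no sum over $j$. Your ``preliminary structural claim'' that $\Vi_n\subset\Ti\{u,\delta_{k\ge},h_\ge,\kappa\}$ is precisely the upper half of this two-sided control, and it is an \emph{input} from the \cite{Balanca-2015b} construction, not something recoverable from the black-box mass bounds of this paper; you correctly flag it as the main obstacle, but your plan provides no mechanism to obtain it.
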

The technical proof of this lemma is mostly the continuation of the construction presented in \cite{Balanca-2015b} and does not rely on any new estimates on continuous stable trees. As a consequence, for the sake of readability, we only presented the former in Appendix~\ref{sec:appendix}.

Similarly to Lemma~\ref{lemma:sbm_clusters_ub}, we may now investigate the accumulation behaviour on the collections $\Vi_n(u,\delta_k,h)$, $u\in\Di_k$.
\begin{lemma}  \label{lemma:sbm_clusters_lb1}
  Suppose $x\in\R^d$. $\N_x$-a.e. for every $n$ large enough, any $h\in\Hi_n$, $\delta_k\in\ivfo{\rho_n,\rho_{n-1}}$, $u\in\Di_k$ and any subcollection $\Vi\subset\Vi_n(u,\delta_k,h)$ satisfying
  \begin{align*}
    \forall \sigma_i,\sigma_j\in\Vi;\quad \abs{W_{\sigma_i} - W_{\sigma_j}} \leq 8r_k\eqdef8\delta_k^{(1-\epsilon)/2},
  \end{align*}
  the cardinal of the subset $\Vi$ satisfies:
  \begin{align}  \label{eq:sbm_clusters_lb1}
    \#\Vi \leq
    \begin{cases}
      r_k^{d-\epsilon} \vartheta_n^{-\pthb{\tfrac{1}{\gamma-1}\vee\tfrac{d}{2}}} & \text{if } \delta_k\in\ivfo{\rho_n,\vartheta_n}; \\
      r_k^{d-\epsilon} \pthbb{ \delta_k^{-\pthb{\tfrac{1}{\gamma-1}\vee\tfrac{d}{2}}} \rho_{n-1}^{\pthb{\tfrac{1}{\gamma-1}-\tfrac{d}{2}}\vee 0}+ \pthb{\rho_{n-1}\delta_k^{-1}}^{\tfrac{1}{\gamma-1}} \vartheta_{n-1}^{-\pthb{\tfrac{1}{\gamma-1}\vee\tfrac{d}{2}}} } &\text{if } \delta_k\in\ivfo{\vartheta_n,\rho_{n-1}} . \\
    \end{cases}
  \end{align}
  where we recall the notation $\vartheta_n \eqdef \rho_n^{(\gamma-1)(\gamma h -1 )} g(\rho_n)^{-\alpha\gamma(\gamma-1)} \geq \rho_n$.
\end{lemma}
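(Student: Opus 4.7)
The plan is to mirror closely the moment-method proof of Lemma~\ref{lemma:sbm_clusters_ub}, replacing its subtree-counting input (Lemma~\ref{lemma:unif_bound_Th}) by the dyadic ball-counting estimate of Lemma~\ref{lemma:trees_suitable_dyadics}(ii). Fix $n$, $h\in\Hi_n$, $\delta_k\in\ivfo{\rho_n,\rho_{n-1}}$, $u\in\Di_k$, and denote by $z_{n,k}$ the right-hand side of~\eqref{eq:sbm_clusters_lb1}. Let $A(n,k,h,u)$ be the event that some subcollection $\Vi\subset\Vi_n(u,\delta_k,h)$ with $\#\Vi>z_{n,k}$ has pairwise $W$-distances bounded by (a slight enlargement of) $8r_k$. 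For an integer $m\geq 1$, Markov's inequality gives
\[
  Q_\Ti^x\bigl(A(n,k,h,u)\bigr) \leq \binom{\ceil{z_{n,k}}}{m}^{-1} Q_\Ti^x\bigl[\Ni_m\bigr],
\]
where $\Ni_m$ counts distinct $m$-tuples $\sigma_1,\dotsc,\sigma_m \in \Vi_n(u,\delta_k,h)$ for which $\max_{i\neq j}\abs{W_{\sigma_i}-W_{\sigma_j}} \leq 12 r_k$.

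The moment $Q_\Ti^x[\Ni_m]$ is then estimated by iteratively conditioning on $W_{\sigma_1},\dotsc,W_{\sigma_{m-1}}$ and applying the local nondeterminism property (Lemma~\ref{lemma:sbm_lnd}): each of the $d$ independent coordinates has conditional variance bounded below by $c_{1,m}\min_{0\leq i<m} d(\sigma_i,\sigma_m)$, so a standard Gaussian density estimate gives
\[
   Q_\Ti^x\Bigl(\max_{i<m}\abs{W_{\sigma_m}-W_{\sigma_i}} \leq 12 r_k \Bigm| W_{\sigma_1},\dotsc,W_{\sigma_{m-1}} \Bigr) \lesssim r_k^d \min_{1\leq i<m} d(\sigma_i,\sigma_m)^{-d/2}.
\]
Bounding $\min^{-d/2}$ by a sum and exploiting the permutation invariance of the $m$-tuple exactly as in the proof of Lemma~\ref{lemma:sbm_clusters_ub}, the problem reduces to controlling
\[
  r_k^{d(m-1)}\sum_{\sigma_1,\dotsc,\sigma_m\text{ distinct}} \prod_{i=1}^{m-1} d(\sigma_i,\sigma_{i+1})^{-d/2}.
\]

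The crux is then the per-iteration dyadic estimate, for fixed $\sigma$, of
\[
  S(\sigma) \eqdef \sum_{\sigma'\in\Vi_n(u,\delta_k,h),\, \sigma'\neq\sigma} d(\sigma,\sigma')^{-d/2}.
\]
I would partition dyadically the range of $d(\sigma,\sigma')$ and invoke Lemma~\ref{lemma:trees_suitable_dyadics}(ii) regime by regime. In the case $\delta_k\in\ivfo{\rho_n,\vartheta_n}$, one only uses the first branch of Lemma~\ref{lemma:trees_suitable_dyadics}(ii), and the geometric series $\sum_r r^{-d/2}[1+(r/\vartheta_n)^{1/(\gamma-1)}]$ saturates at $\vartheta_n^{-(\tfrac{1}{\gamma-1}\vee\tfrac{d}{2})}$. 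In the case $\delta_k\in\ivfo{\vartheta_n,\rho_{n-1}}$, splitting the sum at the intermediate scale $r=\rho_{n-1}$ and applying the two remaining branches of Lemma~\ref{lemma:trees_suitable_dyadics}(ii) produces the two summands of the second branch of~\eqref{eq:sbm_clusters_lb1}. Iterating $m-1$ times, absorbing all polylogarithmic prefactors $g(\cdot)^{-\beta}$ and $n$-dependent constants into an arbitrarily small power $\delta_k^{-\epsilon}$, and combining with $\binom{\ceil{z_{n,k}}}{m}\geq z_{n,k}^m m^{-m}$ produces a bound $Q_\Ti^x(A(n,k,h,u))\leq C_m\,\delta_k^{\alpha m}$ for some $\alpha>0$ independent of $m$. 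Choosing $m$ large enough makes the bound summable across the countable union over $(n,k,u,h)$ indexed in Lemma~\ref{lemma:trees_suitable_dyadics}, and Borel--Cantelli closes the argument.

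The main obstacle will be the case analysis in the dyadic sum $S(\sigma)$: the exponent $\tfrac{1}{\gamma-1}\vee\tfrac{d}{2}$ in~\eqref{eq:sbm_clusters_lb1} comes from whichever endpoint of the geometric series dominates, and this depends on the sign of $\tfrac{d}{2}-\tfrac{1}{\gamma-1}$; tracking the correct endpoint in each of the three regimes of Lemma~\ref{lemma:trees_suitable_dyadics}(ii), and recovering the precise exponents on $\vartheta_n$, $\delta_k$, $\rho_{n-1}$, $\vartheta_{n-1}$, is the genuinely delicate bookkeeping. In addition, one must justify at each induction step that the Gaussian conditional probability bound is meaningful (i.e. strictly smaller than the trivial bound $1$), which is guaranteed since $r_k=\delta_k^{(1-\epsilon)/2}\gg\delta_k$ while the minimum pairwise $d$-distance between elements of $\Vi_n(u,\delta_k,h)$ lying at a common generation is of order at least $\delta_k$ by construction.
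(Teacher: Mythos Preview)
Your proposal is correct and follows essentially the same approach as the paper: the moment method via Markov's inequality on $m$-tuples, the LND reduction (Lemma~\ref{lemma:sbm_lnd}) to chain products $\prod d(\sigma_i,\sigma_{i+1})^{-d/2}$, and the dyadic estimation of $S(\sigma)$ via Lemma~\ref{lemma:trees_suitable_dyadics}(ii) are exactly what the paper does. The only refinement worth noting is that in the case $\delta_k\in\ivfo{\vartheta_n,\rho_{n-1}}$ the paper actually splits the range $r\geq\rho_{n-1}$ further at $\vartheta_{n-1}$ (giving three subintervals $\ivfo{\delta_k,\rho_{n-1}}$, $\ivfo{\rho_{n-1},\vartheta_{n-1}}$, $\ivfo{\vartheta_{n-1},\infty}$ rather than two), but this is just unpacking the two additive pieces of the third branch of Lemma~\ref{lemma:trees_suitable_dyadics}(ii) and does not alter your outline.
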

\begin{proof}
  The structure of the proof is clearly similar to Lemma~\ref{lemma:sbm_clusters_ub} and we will therefore omit technical details that remain the same. Let us set $n\in\N$, $\delta_k\in\ivfo{\rho_n,\rho_{n-1}}$, $h\in\Hi_n$ and $u\in\Di_k$ and $A(u,k,n,h)$ be the event there exist a subcollection $\Vi\subset\Vi_n(u,\delta_k,h)$ such that $\forall \sigma_i,\sigma_j\in\Vi$, $\abs{W_{\sigma_i} - W_{\sigma_j}} \leq 8\delta_k^{(1-\epsilon)/2}$ and $\#\Vi > z_k$, the latter denoting the upper bound presented in Equation~\eqref{eq:sbm_clusters_lb1}. Then, set $p\geq 1$ and define as well the random variable $\Ni(u,k,n,h)$:
  \begin{align*}
    \Ni(u,k,n,h) = \underset{\sigma_1,\dotsc,\sigma_p \text{ distinct} }{\sum\cdots\sum} \indi_{\brc{\max_{i,j}\abs{W_{\sigma_i} - W_{\sigma_j}} \leq 8r_k}},
  \end{align*}
  where we omit to recall that the sum is over distinct elements $\sigma_i\in\Vi_n(u,\delta_k,h)$. The same counting argument and the Markov inequality still entail $ Q_\Ti\pthb{ \indi_{A(u,k,n,h)} } \leq \binom{z_k}{p}^{-1} Q_\Ti\pthb{ \Ni(u,k,n,h) }$.
  To bound $Q_\Ti\pthb{ \Ni(u,k,n,h) }$, we adopt a strategy similar to Lemma~\ref{lemma:sbm_clusters_ub}. The induction presented in the latter still holds and entails:
  \begin{align*}
    Q_\Ti\pthb{ \Ni(u,k,n,h) } \leq c_{0,p}\,r_k^{d(p-1)} \underset{\sigma_1,\dotsc,\sigma_p \text{ distinct}}{\sum\cdots\sum} \prod_{j=1}^{p-1}d(\sigma_j,\sigma_{j+1})^{-d/2}.
  \end{align*}
  the constant $c_{0,p}$ depending on $p$.
  We may now distinguish two different cases, depending on the value of $\delta_k$.
  \begin{enumerate}[(i)]
    \item To start with, let us consider the case $\delta_k\in\ivfo{\rho_n,\vartheta_n}$. Set $\sigma_1,\dotsc,\sigma_{p-1}\in\Vi_n(u,\delta_k,h)$. According to Lemma~\ref{lemma:trees_suitable_dyadics}, and the construction presented in \cite{Balanca-2015b}, there is no $\sigma_p\neq\sigma_{p-1}$ in the ball $\Bi(\sigma_{p-1},2r)$, when $r < \vartheta_n$. Furthermore, according to the estimate presented in Lemma~\ref{lemma:trees_suitable_dyadics},
    \begin{align*}
      \forall r\geq\vartheta_n;\quad \#\pthb{ \Bi(\sigma_{p-1},2r) \cap \Vi_n(u,\delta_k,h) } \leq c_0 \, \pthb{r \vartheta_n^{-1}}^{\tfrac{1}{\gamma-1}} g(r)^{-\beta}.
    \end{align*}
    recalling that $\vartheta_n=\rho_n^{(\gamma h-1)(\gamma-1)} g(\rho_n)^{-\alpha\gamma(\gamma-1)}$.
    Hence,
    \begin{align*}
      \sum_{\sigma_p\neq\sigma_{p-1}} d(\sigma_{p-1},\sigma_{p})^{-d/2}
      &\leq \sum_{\delta_m\geq \vartheta_n}  \sum_{\sigma_p\in \Bi(\sigma_{p-1},2\delta_m) \cap \Vi_n(u,\delta_k,h)} \delta_m^{-d/2} \\
      &\leq c_1\sum_{\delta_m\geq \vartheta_n} \vartheta_n^{-\tfrac{1}{\gamma-1}} \, \delta_m^{\tfrac{1}{\gamma-1}-\tfrac{d}{2}} g(\delta_m)^{-\beta} \\
      &\leq c_2 \, \vartheta_n^{-\pthb{\tfrac{1}{\gamma-1}\vee\tfrac{d}{2}}} g(\delta_k)^{-\beta-1}
      \leq c_3 \,r_k^{-d+\epsilon/2} z_k,
    \end{align*}
    using the notation $z_k$ previously introduced.\vsp

    \item Let us now look at the case $\delta_k\in\ivfo{\vartheta_n,\rho_{n-1}}$. Owing the estimates presented in Lemma~\ref{lemma:trees_suitable_dyadics}, we need to split the sum $\sum_{\sigma_p\neq\sigma_{p-1}} d(\sigma_{p-1},\sigma_{p})^{-d/2}$ into three different components corresponding to the intervals $\ivfo{\delta_k,\rho_{n-1}}$, $\ivfo{\rho_{n-1},\vartheta_{n-1}}$ and \ivfo{\vartheta_{n-1},\infty}. To begin with,
    \begin{align*}
      \sum_{\delta_m\in\ivfo{\delta_k,\rho_{n-1}}} \sum_{\sigma_p\in \Bi(\sigma_{p-1},2\delta_m) \cap \Vi_n(u,\delta_k,h)} \delta_m^{-d/2}
      &\leq c_1\sum_{\delta_m\in\ivfo{\delta_k,\rho_{n-1}}} \delta_m^{\tfrac{1}{\gamma-1}-\tfrac{d}{2}} \delta_k^{-\tfrac{1}{\gamma-1}} g(\delta_k)^{-\beta} \\
      &\leq c_2\, \delta_k^{-\pthb{\tfrac{1}{\gamma-1}\vee\tfrac{d}{2}}} \rho_{n-1}^{\pthb{\tfrac{1}{\gamma-1}-\tfrac{d}{2}}\vee 0} g(\delta_k)^{-\beta-1}.
    \end{align*}
    In addition,
    \begin{align*}
      \sum_{\delta_m\in\ivfo{\rho_{n-1},\vartheta_{n-1}}} \sum_{\sigma_p\in \Bi(\sigma_{p-1},2\delta_m) \cap \Vi_n(u,\delta_k,h)} \delta_m^{-d/2}
      &\leq c_1\sum_{\delta_m\in\ivfo{\rho_{n-1},\vartheta_{n-1}}} \delta_m^{-\tfrac{d}{2}} \pthb{\delta_k\rho_{n-1}^{-1}}^{-\tfrac{1}{\gamma-1}} g(\delta_k)^{-\beta} \\
      &\leq c_2\, \delta_k^{-\tfrac{1}{\gamma-1}} \rho_{n-1}^{{\tfrac{1}{\gamma-1}-\tfrac{d}{2}}} g(\delta_k)^{-\beta} \\
      &\leq c_2\, \delta_k^{-\pthb{\tfrac{1}{\gamma-1}\vee\tfrac{d}{2}}} \rho_{n-1}^{\pthb{\tfrac{1}{\gamma-1}-\tfrac{d}{2}}\vee 0} g(\delta_k)^{-\beta-1}.
    \end{align*}
    since $\delta_k\leq\rho_{n-1}$. Finally, the last part is such that
    \begin{align*}
      \sum_{\delta_m\geq\vartheta_{n-1}} \sum_{\sigma_p\in \Bi(\sigma_{p-1},2\delta_m) \cap \Vi_n(u,\delta_k,h)} \delta_m^{-d/2}
      &\leq c_1\sum_{\delta_m\geq\vartheta_{n-1}} \pthb{\rho_{n-1}\delta_k^{-1}}^{\tfrac{1}{\gamma-1}}  \delta_m^{-d/2} \pthb{\delta_m\,\vartheta_{n-1}^{-1}}^{\tfrac{1}{\gamma-1}} g(\delta_k)^{-\beta} \\
      &\leq c_2 \pthb{\rho_{n-1}\delta_k^{-1}}^{\tfrac{1}{\gamma-1}} \vartheta_{n-1}^{-\pthb{\tfrac{1}{\gamma-1}\vee\tfrac{d}{2}}} g(\delta_k)^{-\beta-1}.
    \end{align*}
    Combining the three previous bounds, we get as well $\sum_{\sigma_p\neq\sigma_{p-1}} d(\sigma_{p-1},\sigma_{p})^{-d/2}
      \leq c_3 \,r_k^{-d+\epsilon/2} z_k$.
  \end{enumerate}
  Consequently, we obtain by induction in two previous cases:
  \begin{align*}
    Q_\Ti\pthb{ \Ni(u,k,n,h) }
    &\leq c_{1,p}\, r_k^{\epsilon(p-1)/2} z_k^{p-1}\cdot \delta_k^{-1/(\gamma-1)-\epsilon}.
  \end{align*}
  where the last term in the bound stands for the last step in the induction and the cardinal of $\Vi_n(u,\delta_k,h)$. Therefore, $Q_\Ti\pthb{ \indi_{A(u,k,n,h)} } \leq c_{2,p} \, r_k^{\epsilon(p-1)/2} \cdot z_k^{-1} \delta_k^{-1/(\gamma-1)-\epsilon}$ and
  \begin{align*}
    Q_\Ti\pthbb{ \bigcup_{u\in\Di_k\cap\ivoo{0,h(\Ti)}} \indi_{A(u,k,n,h)} } \leq c_{2,p} \, r_k^{\epsilon(p-1)/2} \cdot z_k^{-1} \delta_k^{-1/(\gamma-1)-\epsilon-\eta} \leq c_{3,p} \, \delta_k,
  \end{align*}
  if the parameter $p$ is chosen sufficiently large. Summing over $\delta_k\in\ivfo{\rho_n,\rho_{n-1}}$ and $h\in\Hi_n$, we obtain
  \begin{align*}
    Q_\Ti\pthbb{ \bigcup_{h\in\Hi_n}  \bigcup_{\delta_k\in\ivfo{\rho_n,\rho_{n-1}}} \bigcup_{u\in\Di_k\cap\ivoo{0,h(\Ti)}} \indi_{A(u,k,n,h)} }  \leq c_{4,p} \, 2^n\rho_{n-1},
  \end{align*}
  The sum over $n\in\N$ of the last quantity clearly converges, hence concluding the proof with the help of Borel--Cantelli lemma.
\end{proof}
Let us also present a similar accumulation lemma on the complete collections $\Vi_n(\delta_k,h)$, necessary to the proof of Theorem~\ref{th:sbm_spectrum2}.
\begin{lemma}  \label{lemma:sbm_clusters_lb2}
  Suppose $x\in\R^d$ and $d\geq\tfrac{2\gamma}{\gamma-1}$. $\N_x$-a.e. for every $n$ large enough, any $h\in\Hi_n$, $\delta_k\in\ivfo{\rho_n,\rho_{n-1}}$ and any subcollection $\Vi\subset\Vi_n(\delta_k,h)$ satisfying
  \begin{align*}
    \forall \sigma_i,\sigma_j\in\Vi;\quad \abs{W_{\sigma_i} - W_{\sigma_j}} \leq 8r_k\eqdef8\delta_k^{(1-\epsilon)/2},
  \end{align*}
  the cardinal of the subset $\Vi$ satisfies:
  \begin{align}  \label{eq:sbm_clusters_lb2}
    \#\Vi \leq
    \begin{cases}
      r_k^{d-\epsilon}\delta_k^{-1} \vartheta_n^{1-\tfrac{d}{2}} & \text{if } \delta_k\in\ivfo{\rho_n,\vartheta_n}; \\
      r_k^{d-\epsilon} \pthbb{ \delta_k^{-\tfrac{d}{2}} + \rho_{n-1}^{\tfrac{1}{\gamma-1}}\delta_k^{-\tfrac{\gamma}{\gamma-1}} \vartheta_{n-1}^{1-\tfrac{d}{2}}} &\text{if } \delta_k\in\ivfo{\vartheta_n,\rho_{n-1}} . \\
    \end{cases}
  \end{align}
  where we recall the notation $\vartheta_n \eqdef \rho_n^{(\gamma-1)(\gamma h -1 )} g(\rho_n)^{-\alpha\gamma(\gamma-1)}$.
\end{lemma}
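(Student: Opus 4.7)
The plan is to parallel the proof of Lemma~\ref{lemma:sbm_clusters_lb1} and to insert, as the only genuinely new ingredient, an inter-level counting estimate for the full collection $\Vi_n(\delta_k,h)=\bigcup_{u\in\Di_k}\Vi_n(u,\delta_k,h)$.

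For fixed $n\in\N$, $h\in\Hi_n$ and $\delta_k\in\ivfo{\rho_n,\rho_{n-1}}$, I would define $A(k,n,h)$ to be the event that there exists a subcollection $\Vi\subset\Vi_n(\delta_k,h)$ with $\#\Vi>z_k$, where $z_k$ denotes the right-hand side of \eqref{eq:sbm_clusters_lb2}, and such that $\max_{\sigma_i,\sigma_j\in\Vi}\abs{W_{\sigma_i}-W_{\sigma_j}}\leq 8r_k$. Introducing the $p$-fold counting random variable
\begin{align*}
  \Ni(k,n,h) = \underset{\sigma_1,\dotsc,\sigma_p\text{ distinct}}{\sum\cdots\sum}\indi_{\brc{\max_{i,j}\abs{W_{\sigma_i}-W_{\sigma_j}}\leq 8r_k}},
\end{align*}
where the summation ranges over $p$-tuples in the entire $\Vi_n(\delta_k,h)$, Markov's inequality gives $Q_\Ti\pthb{\indi_{A(k,n,h)}}\leq\binom{\ceil{z_k}}{p}^{-1}Q_\Ti\pthb{\Ni(k,n,h)}$. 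Applying the local non-determinism of $\Wi$ (Lemma~\ref{lemma:sbm_lnd}) exactly as in Lemma~\ref{lemma:sbm_clusters_ub} and iterating the permutation argument, the task reduces to estimating
\begin{align*}
  r_k^{d(p-1)} \underset{\sigma_1,\dotsc,\sigma_p\text{ distinct}}{\sum\cdots\sum} \prod_{j=1}^{p-1} d(\sigma_j,\sigma_{j+1})^{-d/2}.
\end{align*}

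The genuinely new step is to control, for each fixed $\sigma_{p-1}\in\Vi_n(\delta_k,h)$ at some level $u\in\Di_k$, the inner sum $\sum_{\sigma_p\neq\sigma_{p-1}}d(\sigma_{p-1},\sigma_p)^{-d/2}$ over $\sigma_p\in\Vi_n(\delta_k,h)$. A ball $\Bi(\sigma_{p-1},2r)$ meets $\Vi_n(u',\delta_k,h)$ only when $\abs{u-u'}\leq 2r$, so at most $\bigo(r/\delta_k+1)$ levels of $\Di_k$ contribute, and on each such level the intra-level count is controlled by Lemma~\ref{lemma:trees_suitable_dyadics}(ii) applied at the projection of $\sigma_{p-1}$ onto level $u'$. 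Splitting the dyadic sum in $\delta_m$ into the regimes $\ivfo{\delta_k,\vartheta_n}$, $\ivfo{\vartheta_n,\rho_{n-1}}$, $\ivfo{\rho_{n-1},\vartheta_{n-1}}$ and $\ivfo{\vartheta_{n-1},\infty}$, and exploiting the assumption $d\geq\tfrac{2\gamma}{\gamma-1}$ (hence $\tfrac{d}{2}\geq\tfrac{1}{\gamma-1}$) to ensure that the resulting geometric sums converge at their lower endpoints, I expect to obtain
\begin{align*}
  \sum_{\sigma_p\neq\sigma_{p-1}}d(\sigma_{p-1},\sigma_p)^{-d/2}\leq c\,r_k^{-d+\epsilon/2}\,z_k,
\end{align*}
where the extra factors $\vartheta_n/\delta_k$ (first regime of $z_k$) and $\vartheta_{n-1}/\delta_k$ (second regime), absent from Lemma~\ref{lemma:sbm_clusters_lb1}, arise precisely from the level count $r/\delta_k$ evaluated at the natural cut-offs $r=\vartheta_n$ and $r=\vartheta_{n-1}$. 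Iterating and accounting for the last step of the induction through a crude bound on $\#\Vi_n(\delta_k,h)$ yields $Q_\Ti\pthb{\indi_{A(k,n,h)}}\leq c_p\,r_k^{\epsilon(p-1)/2}\,z_k^{-1}\,\delta_k^{-1/(\gamma-1)-\epsilon}$; choosing $p$ large enough renders this summable over $h\in\Hi_n$, $\delta_k\in\ivfo{\rho_n,\rho_{n-1}}$ and $n\in\N$, so Borel--Cantelli concludes.

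The main obstacle is precisely this inter-level bookkeeping. Unlike in Lemma~\ref{lemma:sbm_clusters_lb1}, where all the $\sigma_i$ live in a single level set $\Ti(u)$ and Lemma~\ref{lemma:trees_suitable_dyadics}(ii) applies directly, here nodes sit at distinct levels of $\Di_k$ and one must carefully project to the relevant ancestor before invoking the intra-level bound. The sum-by-annuli computation that converts the resulting $r/\delta_k$ factor into the sharp exponents $\vartheta_n^{1-d/2}$ and $\vartheta_{n-1}^{1-d/2}$ appearing in \eqref{eq:sbm_clusters_lb2} is where the hypothesis $d\geq\tfrac{2\gamma}{\gamma-1}$ plays its role, as it guarantees the correct direction of the geometric series at both critical endpoints $\delta_m=\delta_k$ and $\delta_m=\rho_{n-1}$.
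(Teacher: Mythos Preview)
Your proposal is correct and matches the paper's approach essentially step for step: the same counting variable $\Ni(k,n,h)$, the same LND reduction to products of $d(\sigma_j,\sigma_{j+1})^{-d/2}$, the same inter-level count $r/\delta_k$ multiplied into the intra-level bounds of Lemma~\ref{lemma:trees_suitable_dyadics}(ii), and the same Borel--Cantelli finish. The paper organises the dyadic sum by first splitting on whether $\delta_k\in\ivfo{\rho_n,\vartheta_n}$ or $\delta_k\in\ivfo{\vartheta_n,\rho_{n-1}}$ and then handling the relevant ranges of $\delta_m$ in each case, whereas you describe a single four-regime split; these are equivalent, and your explicit mention of the projection step and of where $d\geq\tfrac{2\gamma}{\gamma-1}$ enters (forcing $\tfrac{\gamma}{\gamma-1}-\tfrac{d}{2}\leq 0$ so the geometric sums collapse to their lower endpoints) is in fact more informative than the paper's terse ``as a direct consequence''.
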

\begin{proof}
  The proof being very close to the one of Lemma~\ref{lemma:sbm_clusters_lb1}, let us only focus on the estimates which differ. To begin with, we define similarly the random variable $\Ni(k,n,h)$:
  \begin{align*}
    \Ni(k,n,h) = \underset{\sigma_1,\dotsc,\sigma_p \text{ distinct} }{\sum\cdots\sum} \indi_{\brc{\max_{i,j}\abs{W_{\sigma_i} - W_{\sigma_j}} \leq 8r_k}},
  \end{align*}
  where the sum is over elements $\sigma_i\in\Vi_n(\delta_k,h)$. Similarly, we get:
  \begin{align*}
    Q_\Ti\pthb{ \Ni(k,n,h) } \leq c_{0,p}\,r_k^{d(p-1)} \underset{\sigma_1,\dotsc,\sigma_p \text{ distinct}}{\sum\cdots\sum} \prod_{j=1}^{p-1}d(\sigma_j,\sigma_{j+1})^{-d/2}.
  \end{align*}
  \begin{enumerate}[(i)]
    \item In the case $\delta_k\in\ivfo{\rho_n,\vartheta_n}$, set $\sigma_1,\dotsc,\sigma_{p-1}\in\Vi_n(\delta_k,h)$. According to Lemma~\ref{lemma:trees_suitable_dyadics}, and the construction presented in \cite{Balanca-2015b}, there is no $\sigma_p\neq\sigma_{p-1}$ in the ball $\Bi(\sigma_{p-1},2r)$, when $r < \vartheta_n$. Furthermore, as a direct consequence of the estimates presented in Lemma~\ref{lemma:trees_suitable_dyadics}:
    \begin{align*}
      \forall r\geq\vartheta_n;\quad \#\pthb{ \Bi(\sigma_{p-1},2r) \cap \Vi_n(\delta_k,h) } \leq c_0 \, r\delta_k^{-1} \cdot \pthb{r\,\vartheta_n^{-1}}^{\tfrac{1}{\gamma-1}} g(r)^{-\beta}.
    \end{align*}
    Hence,
    \begin{align*}
      \sum_{\sigma_p\neq\sigma_{p-1}} d(\sigma_{p-1},\sigma_{p})^{-d/2}
      &\leq \sum_{\delta_m\geq \vartheta_n}  \sum_{\sigma_p\in \Bi(\sigma_{p-1},2\delta_m) \cap \Vi_n(\delta_k,h)} \delta_m^{-d/2} \\
      &\leq c_1\sum_{\delta_m\geq \vartheta_n} \delta_k^{-1}\vartheta_n^{-\tfrac{1}{\gamma-1}} \, \delta_m^{\tfrac{\gamma}{\gamma-1}-\tfrac{d}{2}} g(\delta_m)^{-\beta} \\
      &\leq c_2 \, \delta_k^{-1}\vartheta_n^{1-\tfrac{d}{2}} g(\delta_k)^{-\beta-1}
      \leq c_3 \,r_k^{-d+\epsilon/2} z_k,
    \end{align*}
    where $z_k$ refers to right hand term in \eqref{eq:sbm_clusters_lb2}.\vsp

    \item Let us now look at the case $\delta_k\in\ivfo{\vartheta_n,\rho_{n-1}}$. Owing the estimates presented in Lemma~\ref{lemma:trees_suitable_dyadics}, we also need to split the sum $\sum_{\sigma_p\neq\sigma_{p-1}} d(\sigma_{p-1},\sigma_{p})^{-d/2}$ into three different components corresponding to the intervals $\ivfo{\delta_k,\rho_{n-1}}$, $\ivfo{\rho_{n-1},\vartheta_{n-1}}$ and $\ivfo{\vartheta_{n-1},\infty}$. To begin with,
    \begin{align*}
      \sum_{\delta_m\in\ivfo{\delta_k,\rho_{n-1}}} \sum_{\sigma_p\in \Bi(\sigma_{p-1},2\delta_m) \cap \Vi_n(\delta_k,h)} \delta_m^{-d/2}
      &\leq c_1\sum_{\delta_m\in\ivfo{\delta_k,\rho_{n-1}}} \delta_m^{\tfrac{\gamma}{\gamma-1}-\tfrac{d}{2}} \delta_k^{-\tfrac{\gamma}{\gamma-1}} g(\delta_k)^{-\beta-1} \\
      &\leq c_2\, \delta_k^{-\tfrac{d}{2}} g(\delta_k)^{-\beta-1}.
    \end{align*}
    According to the construction recalled at the beginning of the section, for any $\delta_m\in\ivfo{\rho_{n-1},\vartheta_{n-1}}$, $\Bi(\sigma_{p-1},2\delta_m) \cap \Vi_n(\delta_k,h) = \Bi(\sigma_{p-1},2\rho_{n-1}) \cap \Vi_n(\delta_k,h)$. Hence, still using Lemma~\ref{lemma:trees_suitable_dyadics}:
    \begin{align*}
      \sum_{\delta_m\in\ivfo{\rho_{n-1},\vartheta_{n-1}}} \sum_{\sigma_p\in \Bi(\sigma_{p-1},2\delta_m) \cap \Vi_n(\delta_k,h)} \delta_m^{-d/2}
      &\leq c_1\sum_{\delta_m\in\ivfo{\rho_{n-1},\vartheta_{n-1}}} \delta_m^{-\tfrac{d}{2}} \pthb{\delta_k\rho_{n-1}^{-1}}^{-\tfrac{\gamma}{\gamma-1}} g(\delta_k)^{-\beta} \\
      &\leq c_2\, \delta_k^{-\tfrac{\gamma}{\gamma-1}} \rho_{n-1}^{{\tfrac{\gamma}{\gamma-1}-\tfrac{d}{2}}} g(\delta_k)^{-\beta-1}
      \leq c_2\, \delta_k^{-\tfrac{d}{2}} g(\delta_k)^{-\beta-1}.
    \end{align*}
    since $\delta_k\leq\rho_{n-1}$. Finally, the last part is such that
    \begin{align*}
      \sum_{\delta_m\geq\vartheta_{n-1}} \sum_{\sigma_p\in \Bi(\sigma_{p-1},2\delta_m) \cap \Vi_n(\delta_k,h)} \delta_m^{-d/2}
      &\leq c_1\sum_{\delta_m\geq\vartheta_{n-1}}  \pthb{\rho_{n-1}\vartheta_{n-1}^{-1}}^{\tfrac{1}{\gamma-1}}  \pthb{\delta_m\delta_k^{-1}}^{\tfrac{\gamma}{\gamma-1}} g(\delta_k)^{-\beta-1} \\
      &\leq c_2 \rho_{n-1}^{\tfrac{1}{\gamma-1}}\delta_k^{-\tfrac{\gamma}{\gamma-1}} \vartheta_{n-1}^{1-\tfrac{d}{2}} g(\delta_k)^{-\beta-1}.
    \end{align*}
    Combining the three previous bounds, we get $\sum_{\sigma_p\neq\sigma_{p-1}} d(\sigma_{p-1},\sigma_{p})^{-d/2} \leq c_3 \,r_k^{-d+\epsilon/2} z_k$.
  \end{enumerate}
  We omit the rest of the proof which remains exactly the same.
\end{proof}

The estimate obtained in Lemma~\ref{lemma:sbm_clusters_lb1} is sufficient to deduce a mass distribution principle on the collection of measure $(\nu_{t,2h})_{h\in\Hi}$, recalling that $\Hi$ stands for a closed interval $\Hi\subset\ivof{\tfrac{1}{\gamma},\tfrac{1}{\gamma-1}}$.
\begin{lemma}  \label{lemma:sbm_mass_principle_lb}
  Suppose $x\in\R^d$. $\N_x$-a.e. for every $h\in\Hi$ and any time $t\in\ivoo{\epsilon,h(\Ti)-\epsilon}$,
  \begin{align}  \label{eq:sbm_mass_pple_spectrum1}
    \forall z\in\R^d,\ \forall r\in\ivoo{0,1};\quad \nu_{t,2h}\pthb{B(z,r)} \leq c_0\,r^{d\wedge(2\gamma h-2)-\eta\eps},
  \end{align}
  where $c_0>0$ only depends on $\epsilon$ and $\Hi$, and $\eta>0$ on $\Hi$. In addition, there also exists a decreasing sequence $\varrho_n\rightarrow 0$ such that
  \begin{align}  \label{eq:sbm_mass_pple_spectrum2}
    \forall z\in\R^d,\ \forall n\in\N;\quad \nu_{t,2h}\pthb{B(z,\varrho_n)} \leq c_0\,\varrho_n^{d\wedge\tfrac{2}{\gamma-1}-\eta\eps}.
  \end{align}
\end{lemma}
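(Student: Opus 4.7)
The plan is to pull back balls in $\R^d$ to the tree, use Hölder continuity of $W$ together with the dyadic-like approximation of $\Vbb_n(h)$ from Lemma~\ref{lemma:trees_suitable_dyadics}, control the number of relevant dyadic nodes via the local-nondeterminism accumulation bound of Lemma~\ref{lemma:sbm_clusters_lb1}, and finally bound the mass contributed by each such node via the known mass-distribution principle \eqref{eq:mass_pple_spectrum1} on $\mu_{t,h}$.

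More concretely, fix $z\in\R^d$ and a small $r\in\ivoo{0,1}$. First I would choose the integer $k$ with $r_{k+1}<r\leq r_k$ (where $r_k=\delta_k^{(1-\epsilon)/2}$) and the unique $n$ such that $\delta_k\in\ivfo{\rho_n,\rho_{n-1}}$; the choice $\delta_k\approx r^{2/(1-\epsilon)}$ is what converts tree-exponents into Euclidean exponents. Since we may assume that the $\tfrac{1-\epsilon}{2}$-Hölder estimate on $W$ holds at all scales $\leq\delta_k$ (upon taking $k$ beyond a random threshold), any $\sigma\in G(t,h)$ with $W_\sigma\in B(z,r)$ is at tree-distance at most $5\delta_k$ from some $\sigma^\star\in\Vi_n(u,\delta_k,h)$ (use Lemma~\ref{lemma:trees_suitable_dyadics}(i) together with the fact that $G(t,h)\subset\bigcup_{\sigma''\in\Vbb_n(k_n,h_n)}\Ti_{\sigma''}\cap\Ti(t)$ and $d(\sigma,\sigma'')\leq\rho_n\leq\delta_k$). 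By Hölder continuity, such an $\sigma^\star$ must satisfy $W_{\sigma^\star}\in B(z,8r_k)$. Hence
\begin{equation*}
\nu_{t,2h}\pthb{B(z,r)}
\leq \sum_{\sigma^\star\in\Vi^\star}\mu_{t,h}\pthb{\Bi(\sigma^\star,5\delta_k)\cap\Ti(t)},
\qquad
\Vi^\star\eqdef\brcb{\sigma^\star\in\Vi_n(u,\delta_k,h):W_{\sigma^\star}\in B(z,8r_k)}.
\end{equation*}

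The second step is to control the two factors separately. The collection $\Vi^\star$ is precisely of the form handled by Lemma~\ref{lemma:sbm_clusters_lb1}, so $\#\Vi^\star\leq z_k$ with $z_k$ the right-hand side of \eqref{eq:sbm_clusters_lb1}. For the mass factor, re-centering on a node in $G(t,h)\cap\Bi(\sigma^\star,5\delta_k)$ (which is non-empty whenever the term contributes) and applying \eqref{eq:mass_pple_spectrum1} gives $\mu_{t,h}\pthb{\Bi(\sigma^\star,5\delta_k)\cap\Ti(t)}\leq c\,\delta_k^{\gamma h-1-\eps(\delta_k)}\log(1/\delta_k)^\eta$. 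Combining, $\nu_{t,2h}(B(z,r))\leq c'\,z_k\,\delta_k^{\gamma h-1-\eps(\delta_k)}\log(1/\delta_k)^\eta$.

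The third step is a case analysis to verify that this product has the required exponent. Writing $\delta_k\approx r^{2/(1-\epsilon)}$ and $\vartheta_n=\rho_n^{(\gamma-1)(\gamma h-1)}g(\rho_n)^{-\alpha\gamma(\gamma-1)}$, one distinguishes the regimes $\delta_k\in\ivfo{\rho_n,\vartheta_n}$ and $\delta_k\in\ivfo{\vartheta_n,\rho_{n-1}}$, and within each whether $d/2$ or $1/(\gamma-1)$ dominates in the exponent $\pthb{\tfrac{1}{\gamma-1}\vee\tfrac{d}{2}}$ appearing in \eqref{eq:sbm_clusters_lb1}. In every sub-case the worst value of $\delta_k$ (within its regime) yields the envelope exponent $d\wedge(2\gamma h-2)$, modulo an $\eta\eps$-loss absorbed in the polylog factors and in $\eps(\delta_k)$; this establishes \eqref{eq:sbm_mass_pple_spectrum1}.

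For the sharper bound \eqref{eq:sbm_mass_pple_spectrum2}, the idea is to restrict to the sub-sequence of radii where $\delta_k$ lies at the breakpoint $\rho_n$ (equivalently, take $\varrho_n\eqdef r_n$ with $k=n$ so $\delta_k=\rho_n$). At these preferred scales, $\vartheta_n$ disappears from the estimate in Lemma~\ref{lemma:sbm_clusters_lb1}, leaving essentially $z_n\leq r_n^{d-\epsilon}\rho_n^{-(1/(\gamma-1)\vee d/2)}$, and combined with the mass bound $\rho_n^{\gamma h-1}$ one checks that the $h$-dependent part cancels: $\rho_n^{\gamma h-1-(1/(\gamma-1)\vee d/2)}\cdot r_n^d\lesssim r_n^{d\wedge\tfrac{2}{\gamma-1}-\eta\eps}$. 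The main obstacle throughout is keeping track of all logarithmic and $\eps$-corrections during the case analysis and verifying that the mass-distribution principle \eqref{eq:mass_pple_spectrum1} can legitimately be centered at the node $\sigma^\star$ (or a nearby element of $G(t,h)$) rather than only at points of $G(t,h)$; the regularization obtained by enlarging the ball radius by a constant factor is what makes this step robust.
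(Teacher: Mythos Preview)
Your overall architecture is exactly the paper's: pull back $B(z,r)$ to the tree, replace $G(t,h)$ by nearby dyadic nodes via Lemma~\ref{lemma:trees_suitable_dyadics}, bound the number of relevant nodes with Lemma~\ref{lemma:sbm_clusters_lb1}, and multiply by a mass bound per node. The difference lies in the mass bound you invoke.

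You use the generic mass-distribution estimate \eqref{eq:mass_pple_spectrum1}, i.e.\ $\mu_{t,h}\pthb{\Bi(\sigma^\star,c\delta_k)}\lesssim\delta_k^{\gamma h-1}$. The paper instead uses the sharper scale-specific bounds from \cite[Lemma 4.21]{Balanca-2015b} (recorded in the proof as \eqref{eq:mass_pple_mu0}--\eqref{eq:mass_pple_mu1}): for $\delta_k\in\ivfo{\rho_n,\vartheta_n}$ one has $\mu_{t,h}\pthb{\Bi(\sigma,12\delta_k)}\lesssim\vartheta_n^{1/(\gamma-1)}\approx\rho_n^{\gamma h-1}$, and for $\delta_k\in\ivfo{\vartheta_n,\rho_{n-1}}$ one has $\mu_{t,h}\pthb{\Bi(\sigma,12\delta_k)}\lesssim(\delta_k\vartheta_{n-1}\rho_{n-1}^{-1})^{1/(\gamma-1)}$. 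These reflect the fact that the Cantor-like construction of $\mu_{t,h}$ ``freezes'' the mass at scale $\rho_n$, so balls of radius $\delta_k\in\ivfo{\rho_n,\vartheta_n}$ carry no more mass than at scale $\rho_n$; the crude bound $\delta_k^{\gamma h-1}$ misses this.

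This matters. In the sub-case $\tfrac{d}{2}\leq\gamma h-1<\tfrac{1}{\gamma-1}$ (so $d\wedge(2\gamma h-2)=d$) with $\delta_k$ near $\vartheta_n$, your product is
\[
\delta_k^{\gamma h-1}\cdot z_k \;\approx\; \vartheta_n^{\gamma h-1}\cdot r_k^{d}\,\vartheta_n^{-1/(\gamma-1)} \;=\; r_k^d\cdot\vartheta_n^{\gamma h-1-1/(\gamma-1)},
\]
and since $\gamma h-1<\tfrac{1}{\gamma-1}$ the extra factor $\vartheta_n^{\gamma h-1-1/(\gamma-1)}\to\infty$; the case analysis does not close. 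With the paper's bound $\vartheta_n^{1/(\gamma-1)}$ in place of $\delta_k^{\gamma h-1}$, the two $\vartheta_n$-powers cancel and one gets exactly $r_k^{d}$. A similar mismatch occurs in the second regime. In high dimension $d\geq\tfrac{2}{\gamma-1}$ your cruder bound does happen to suffice, but the lemma is stated for all $d$, so the refined estimates \eqref{eq:mass_pple_mu0}--\eqref{eq:mass_pple_mu1} are genuinely needed.

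For \eqref{eq:sbm_mass_pple_spectrum2}, your choice $\delta_k=\rho_n$ is harmless (there $\delta_k^{\gamma h-1}=\rho_n^{\gamma h-1}\approx\vartheta_n^{1/(\gamma-1)}$ coincides with the refined bound), though the paper takes the breakpoint $\delta_k=\vartheta_n$ instead; either works once the correct mass estimates are in hand.
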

\begin{proof}
  Let us set $h\in\Hi$, $t\in\ivoo{\epsilon,h(\Ti)-\epsilon}$, $z\in\R^d$ and $r\in\ivoo{0,1}$. Without any loss of generality, we may assume that $z=W_{\sigma_0}$, for some $\sigma_0\in G(t,h)$ (we refer to the introduction of the section for the definition of the latter) and $r=r_k\eqdef\delta_k^{(1-\epsilon)/2}$, $k\in\N$. We aim in this proof to bound the local mass:
  \begin{align*}
    \nu_{t,2h}(B(z,r_k)) = \mu_{t,h}\pthb{ \brcb{\sigma\in G(t,h) : \abs{W_{\sigma_0}-W_{\sigma}} \leq r_k } }.
  \end{align*}
  We know there exists $n\in\N$ and $h_n\in\Hi_n$ such that $\delta_k\in\ivfo{\rho_n,\rho_{n-1}}$ and $\sigma_0\in G(t,h_n,n)$. In addition, according to the properties of $\Vi_n(\delta_k,h_n)$ presented in Lemma~\ref{lemma:trees_suitable_dyadics}, there is $u\leq t\in\Di_k$ such that
  \begin{align*}
    \forall \sigma\in G(t,h),\ \exists \sigma'\in\Vi_n(u,\delta_k,h_n);\quad d(\sigma,\sigma') \leq 6\delta_k.
  \end{align*}
  Consequently, since $W$ is $\tfrac{1-\epsilon}{2}$-Hölder continuous,
  \begin{align*}
    \brcb{\sigma\in G(t,h) : \abs{W_{\sigma_0}-W_{\sigma}} \leq r_k }
    \subset \bigcup_{\substack{\sigma\in\Vi_n(u,\delta_k,h_n):\abs{W_\sigma-W_{\sigma_0}}\leq 4r_k\\ \sigma\in\Ti_\varsigma\in\Tbb(t-6\delta_k,6\delta_k) } } G(t,h)\cap\Ti_\varsigma
  \end{align*}
  For any $\sigma,\sigma'\in\Vi_n(u,\delta_k,h_n)$ such that $\abs{W_\sigma-W_{\sigma_0}}\leq 4r_k$ and $\abs{W_{\sigma'}-W_{\sigma_0}}\leq 4r_k$, one gets $\abs{W_\sigma-W_{\sigma'}}\leq 8r_k$. Consequently, we may use the bound presented in Lemma~\ref{lemma:sbm_clusters_lb1} to obtain an estimate of $\nu_{t,2h}(B(z,r_k))$. Similarly to the latter, we need to distinguish two different cases.
  \begin{enumerate}[(i)]
    \item Consider first the case $\delta_k\in\ivfo{\rho_n,\vartheta_n}$. According to \cite[Lemma 4.21]{Balanca-2015b}, there exist two positive constants $c_0$ and $\eta_0$ such that
    \begin{align}  \label{eq:mass_pple_mu0}
      \forall \sigma\in G(t,h);\quad \mu_{t,h}(\Bi(\sigma,12\delta_k)) \leq c_0\,g(\delta_k)^{-\eta_0}\,\vartheta_n^{\tfrac{1}{\gamma-1}}.
    \end{align}
    Hence, using the bound presented in Equation~\eqref{eq:sbm_clusters_lb1}, we get:
    \begin{align*}
      \nu_{t,2h}(B(z,r_k))
      &\leq c_0\,\vartheta_n^{\tfrac{1}{\gamma-1}} g(\delta_k)^{-\eta_0} \cdot r_k^{d-\epsilon} \vartheta_n^{-\pthb{\tfrac{1}{\gamma-1}\vee\tfrac{d}{2}}}
      \leq r_k^{d-2\epsilon} \vartheta_n^{\pthb{\tfrac{1}{\gamma-1}-\tfrac{d}{2}}\wedge 0}.
    \end{align*}
    If $\tfrac{d}{2}\leq \gamma h-1\leq \tfrac{1}{\gamma-1}$, the previous bound readily implies $\nu_{t,2h}(B(z,r_k)) \leq r_k^{d-2\epsilon}$. On the other hand, if $\tfrac{d}{2} > \gamma h-1$,
    \begin{align*}
      \nu_{t,2h}(B(z,r_k))
      &\leq r_k^{2\gamma h -2-\eta\epsilon} \cdot \frac{\delta_k^{d/2-(\gamma h-1)}}{ \vartheta_n^{\pth{d/2-1/(\gamma-1)}\vee 0} }.
    \end{align*}
    for some $\eta>0$. The second part of the right term is then bounded by a constant, since $\delta_k\leq \vartheta_n$ and $d/2-(\gamma h-1) \geq \pth{d/2-1/(\gamma-1)}\vee 0$, therefore providing the desired estimate.\vsp

    \item Let us now investigate the case $\delta_k\in\ivfo{\vartheta_n,\rho_{n-1}}$. Similarly, \cite[Lemma 4.21]{Balanca-2015b} entails:
    \begin{align}  \label{eq:mass_pple_mu1}
      \forall \sigma\in G(t,h);\quad \mu_{t,h}(\Bi(\sigma,12\delta_k)) \leq c_0\,g(\delta_k)^{-\eta_0}\, \pthb{ \delta_k\vartheta_{n-1}\rho_{n-1}^{-1} }^{\tfrac{1}{\gamma-1}}.
    \end{align}
    Therefore, using the second bound obtained in Lemma~\ref{lemma:sbm_clusters_lb1}, we get:
    \begin{align*}
      \nu_{t,2h}(B(z,r_k)) \leq c_0\, g(\delta_k)^{-\eta_0}\, \pthb{ \delta_k\vartheta_{n-1}\rho_{n-1}^{-1} }^{\tfrac{1}{\gamma-1}} \cdot r_k^{d-\epsilon} \pthbb{
      &\delta_k^{-\pthb{\tfrac{1}{\gamma-1}\vee\tfrac{d}{2}}} \rho_{n-1}^{\pthb{\tfrac{1}{\gamma-1}-\tfrac{d}{2}}\vee 0} + \\
      & \pthb{\rho_{n-1}\delta_k^{-1}}^{\tfrac{1}{\gamma-1}} \vartheta_{n-1}^{-\pthb{\tfrac{1}{\gamma-1}\vee\tfrac{d}{2}}} }.
    \end{align*}
    Let us first suppose that $\tfrac{d}{2}\leq \gamma h-1\leq \tfrac{1}{\gamma-1}$. Simplifying the previous expression, we obtain
    \begin{align*}
      \nu_{t,2h}(B(z,r_k))
      \leq r_k^{d-2\epsilon} \cdot \pthB{ \vartheta_{n-1}^{\tfrac{1}{\gamma-1}}\rho_{n-1}^{-\tfrac{d}{2}}  + 1 }
      \leq r_k^{d-\eta\epsilon} \pthb{\rho_{n-1}^{\gamma h -1 -d/2}+1} \leq  r_k^{d-\eta\epsilon},
    \end{align*}
    recalling that $\vartheta_{n-1} = \rho_{n-1}^{(\gamma h_{n-1} - 1)(\gamma-1)} g(\rho_{n-1})^{-\alpha\gamma(\gamma-1)}$.

    Let us now assume that $\tfrac{d}{2}>\gamma h-1$. Then,
    \begin{align*}
      \nu_{t,2h}(B(z,r_k))
      &\leq r_k^{-\eta_0\epsilon} \pthbb{
      \vartheta_{n-1}^{\tfrac{1}{\gamma-1}} \delta_k^{\tfrac{1}{\gamma-1}\wedge\tfrac{d}{2}} \rho_{n-1}^{-\pthb{\tfrac{1}{\gamma-1}\wedge\tfrac{d}{2}}} + \delta_k^{\tfrac{d}{2}} \vartheta_{n-1}^{\pthb{\tfrac{1}{\gamma-1}-\tfrac{d}{2}}\wedge 0} } \\
      &\leq r_k^{2\gamma h - 2 - \eta_1\epsilon} \pthbb{
      \pthb{\delta_k \rho_{n-1}^{-1} } ^{\tfrac{1}{\gamma-1}\wedge\tfrac{d}{2}-(\gamma h-1)} + \delta_k^{\tfrac{d}{2}-(\gamma h - 1)} \vartheta_{n-1}^{\pthb{\tfrac{1}{\gamma-1}-\tfrac{d}{2}}\wedge 0} } \\
      &\leq 2 r_k^{2\gamma h - 2 - \eta_1\epsilon},
    \end{align*}
    since as previously $\delta_k\leq \rho_{n-1}\leq\vartheta_{n-1}$ and $d/2-(\gamma h-1) \geq \pth{d/2-1/(\gamma-1)}\vee 0$.
  \end{enumerate}
  This last inequality concludes the first part of the proof, as we have obtained the expected upper bound in both of the two cases.
  The second inequality is simply a consequence of the previous bounds in the particular case $\delta_k=\vartheta_n$.
\end{proof}

In high dimension $d\geq\tfrac{2\gamma}{\gamma-1}$, we are able to obtain a uniform mass distribution principle. More specifically, suppose $F\subset\ivoo{\epsilon,\infty}$ is a Borel set satisfying the \emph{strong Frostman's lemma}: for every $\eps>0$,
\begin{align*}
  \exists r_0>0,\quad  \forall x\in F,\ \forall r\in\ivoo{r,r_0};\quad \mu_F\pthb{B(x,r)} \leq r^{\dimH F - \eps}.
\end{align*}
where $\mu_F$ is probability measure on $F$. Following the definition of $\mu_{F,h}$ in \cite{Balanca-2015b}, we then introduce the natural pushforward measure $\nu_{F,2h}(\dt x)$:
\begin{align}  \label{eq:pushforward_measure2}
  \forall V\in\Bi(\R^d);\quad \nu_{F,2h}(V) \eqdef \mu_{F,h}\pthb{ W^{-1}(V) } = \int_{\ivoo{0,\infty}} \mu_{t,h}\pthb{ W^{-1}(V) } \,\mu_F(\dt t),
\end{align}
and the set $G(F,h)=\cup_{t\in F} G(t,h)$.

Similarly to the previous lemma, Lemma~\ref{lemma:sbm_clusters_lb2} leads to  a proper mass distribution principle on the collection of measures $(\nu_{F,2h})_{h\in\Hi}$.
\begin{lemma}  \label{lemma:sbm_mass_principle_lb2}
  Suppose $d\geq\tfrac{2\gamma}{\gamma-1}$ and $x\in\R^d$. $\N_x$-a.e. for all $h\in\Hi$ and every Borel set $F$ satisfying \eqref{eq:strong_frostman}, we get:
  \begin{align*}
    \forall z\in\R^d,\ \forall r\in\ivoo{0,1};\quad \nu_{F,2h}\pthb{B(z,r)} \leq c_0\,r^{2\gamma h-2+2\dimH F-\eta\eps},
  \end{align*}
  where $c_0>0$ only depends on $\epsilon$ and $\Hi$, and $\eta>0$ depends on $\Hi$.
\end{lemma}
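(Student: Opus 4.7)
The plan is to mimic the proof of Lemma~\ref{lemma:sbm_mass_principle_lb}, replacing the single-time collection $\Vi_n(u,\delta_k,h_n)$ by the full dyadic collection $\Vi_n(\delta_k,h_n)$ (since $F$ spans many times), and using Lemma~\ref{lemma:sbm_clusters_lb2} in place of Lemma~\ref{lemma:sbm_clusters_lb1}. The dimension assumption $d\ge \tfrac{2\gamma}{\gamma-1}$ forces $\tfrac{d}{2}>\gamma h-1$ and $\tfrac{d}{2}>\tfrac{1}{\gamma-1}$, so every $\wedge$-term in~\eqref{eq:sbm_clusters_lb2} collapses and only the harder branch of the case analysis of Lemma~\ref{lemma:sbm_mass_principle_lb} needs to be revisited.

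First I would fix $h\in\Hi$ and $F$ satisfying~\eqref{eq:strong_frostman}, then reduce to the case $z=W_{\sigma_0}$ with $\sigma_0\in G(F,h)$ and $r=r_k\eqdef\delta_k^{(1-\eps)/2}$; choose $n\in\N$, $h_n\in\Hi_n$ with $\delta_k\in\ivfo{\rho_n,\rho_{n-1}}$ and $\sigma_0\in G(F,h_n,n)$. Using property~(i) of Lemma~\ref{lemma:trees_suitable_dyadics} and the $\tfrac{1-\eps}{2}$-Hölder continuity of $W$, I write
\begin{align*}
  \nu_{F,2h}\bigl(B(z,r_k)\bigr)
  \leq \sum_{\substack{\sigma\in\Vi_n(\delta_k,h_n) \\ \abs{W_\sigma-W_{\sigma_0}}\leq 4r_k}} \mu_{F,h}\bigl(G(F,h)\cap\Ti_\varsigma(\sigma)\bigr),
\end{align*}
where $\Ti_\varsigma(\sigma)\in\Tbb(a(\sigma)-6\delta_k,6\delta_k)$ is the subtree containing $\sigma$. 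Any two indices in the sum satisfy $\abs{W_{\sigma_i}-W_{\sigma_j}}\le 8r_k$, so Lemma~\ref{lemma:sbm_clusters_lb2} bounds the cardinality.

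The next step is to bound each mass $\mu_{F,h}(G(F,h)\cap\Ti_\varsigma)$. Here I use the analogues of~\eqref{eq:mass_pple_mu0}--\eqref{eq:mass_pple_mu1} derived from the construction of $\mu_{F,h}$ in \cite{Balanca-2015b}; the strong Frostman hypothesis on $F$ contributes a multiplicative factor $\delta_k^{\dimH F-\eps}$ on top of the $\vartheta_n^{1/(\gamma-1)}$ (respectively $(\delta_k\vartheta_{n-1}\rho_{n-1}^{-1})^{1/(\gamma-1)}$) factor appearing in the time-fixed case, because the inclusion $\Ti_\varsigma\subset\tr(a+6\delta_k)\setminus\tr(a-6\delta_k)$ entails $\mu_F$-mass at most $(12\delta_k)^{\dimH F-\eps}$. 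Converting $\delta_k^{\dimH F-\eps}$ to $r_k^{2\dimH F-\eta\eps}$ produces the extra $r_k^{2\dimH F}$ factor in the target estimate.

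I would then split into the two sub-cases $\delta_k\in\ivfo{\rho_n,\vartheta_n}$ and $\delta_k\in\ivfo{\vartheta_n,\rho_{n-1}}$ exactly as in Lemma~\ref{lemma:sbm_mass_principle_lb}. In the first, multiplying $c_0\,g(\delta_k)^{-\eta_0}\vartheta_n^{1/(\gamma-1)}\delta_k^{\dimH F-\eps}$ by the cluster bound $r_k^{d-\eps}\delta_k^{-1}\vartheta_n^{1-d/2}$ and using $\vartheta_n^{1/(\gamma-1)+1-d/2}\delta_k^{\dimH F-1}=\vartheta_n^{\gamma h-1-d/2+1/(\gamma-1)}\cdots$ produces, via $\delta_k\le\vartheta_n$ and $d/2\ge\gamma/(\gamma-1)$, the bound $r_k^{2\gamma h-2+2\dimH F-\eta\eps}$. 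In the second sub-case, the three contributions in~\eqref{eq:sbm_clusters_lb2} are multiplied by the $(\delta_k\vartheta_{n-1}\rho_{n-1}^{-1})^{1/(\gamma-1)}$ prefactor and each is checked individually, again using $d/2\ge\gamma/(\gamma-1)$ and $\delta_k\le\rho_{n-1}\le\vartheta_{n-1}$ to absorb the parasitic powers and conclude.

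The main obstacle I anticipate is bookkeeping rather than a new idea: one must verify that the lifted Frostman factor $\delta_k^{\dimH F}$ really converts to $r_k^{2\dimH F}$ with only an $\eta\eps$ loss, and that the high-dimensional regime $d\geq\tfrac{2\gamma}{\gamma-1}$ kills every competing power in both sub-cases of~\eqref{eq:sbm_clusters_lb2}. Once that arithmetic is done, the estimate follows uniformly in $z$, $r$ and $F$.
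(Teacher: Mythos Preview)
Your proposal is correct and follows essentially the same route as the paper: reduce to $z=W_{\sigma_0}$, $r=r_k$, cover by the full dyadic collection $\Vi_n(\delta_k,h_n)$, bound the number of clusters via Lemma~\ref{lemma:sbm_clusters_lb2}, bound each $\mu_{F,h}$-mass by the single-time estimate \eqref{eq:mass_pple_mu0}--\eqref{eq:mass_pple_mu1} times the Frostman factor $\delta_k^{\dimH F-\eps}$, and then check the two sub-cases $\delta_k\in\ivfo{\rho_n,\vartheta_n}$ and $\delta_k\in\ivfo{\vartheta_n,\rho_{n-1}}$. Two cosmetic slips: the bound in \eqref{eq:sbm_clusters_lb2} already has no $\wedge$-terms (they were resolved in Lemma~\ref{lemma:sbm_clusters_lb2} under the hypothesis $d\ge\tfrac{2\gamma}{\gamma-1}$), and in the second sub-case there are two, not three, contributions to combine.
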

\begin{proof}
  The structure of the proof follows the one of Lemma~\ref{lemma:sbm_mass_principle_lb}. Let us set $h\in\Hi$, $F\in\ivoo{\epsilon,\infty}$ satisfying \eqref{eq:strong_frostman}, $s=\dimH F$, $z\in\R^d$ and $r\in\ivoo{0,1}$. Without any loss of generality, we may assume that $z=W_{\sigma_0}$, for some $\sigma_0\in G(F,h)$ and $r=r_k\eqdef\delta_k^{(1-\epsilon)/2}$, $k\in\N$. Similarly, we observe that:
  \begin{align*}
    \forall n\in\N;\quad \brcb{\sigma\in G(F,h) : \abs{W_{\sigma_0}-W_{\sigma}} \leq r_k }
    \subset \bigcup_{\substack{\sigma\in\Vi_n(\delta_k,h_n)\\\abs{W_\sigma-W_{\sigma_0}}\leq 4r_k} } G(F,h)\cap \Bi(\sigma,12\delta_k).
  \end{align*}
  \begin{enumerate}[(i)]
    \item Consider first the case $\delta_k\in\ivfo{\rho_n,\vartheta_n}$. According to \cite[Lemma 4.21]{Balanca-2015b}, the uniform bound \eqref{eq:mass_pple_mu0} and the property \eqref{eq:strong_frostman}, for any $\sigma\in\Vi_n(\delta_k,h_n)$:
    \begin{align*}
      \mu_{F,h}\pthb{ G(F,h)\cap\Bi(\sigma,12\delta_k) }
      \leq \sup_{s\in I(\sigma,k)} \mu_{s,h}\pthb{ \Bi(\sigma,12\delta_k) } \cdot \mu_F\pthb{I(\sigma,k)}
      \leq c_1\,\delta_k^{s-\epsilon}\cdot g(\delta_k)^{-\eta_0}\,\vartheta_n^{\tfrac{1}{\gamma-1}}.
    \end{align*}
    where $I(\sigma,k)\eqdef \ivoob{h(\sigma)-12\delta_k,h(\sigma)+12\delta_k}$. Hence, using the bound presented in Equation~\eqref{eq:sbm_clusters_lb2}, we get:
    \begin{align*}
      \nu_{F,2h}(B(z,r_k))
      &\leq c_1\,\delta_k^{s-\epsilon} g(\delta_k)^{-\eta_0}\,\vartheta_n^{\tfrac{1}{\gamma-1}} \cdot r_k^{d-\epsilon}\delta_k^{-1} \vartheta_n^{1-\tfrac{d}{2}}
      \leq c_1\, r_k^{2\gamma h-2+2s-\eta\epsilon} \cdot \frac{\delta_k^{d/2-\gamma h}}{ \vartheta_n^{d/2-\gamma/(\gamma-1)} },
    \end{align*}
    which entails the desired bound as $\delta_k\leq \vartheta_n$ and $d/2-\gamma h \geq d/2-\gamma/(\gamma-1)$.

    \item Let us now investigate the case $\delta_k\in\ivfo{\vartheta_n,\rho_{n-1}}$. Similarly, still according to the estimates \eqref{eq:mass_pple_mu1} and \eqref{eq:strong_frostman}, for any $\sigma\in\Vi_n(\delta_k,h_n)$:
    \begin{align*}
      \mu_{F,h}\pthb{ G(F,h)\cap\Bi(\sigma,12\delta_k) } \leq c_1\,\delta_k^{s-\epsilon}\cdot g(\delta_k)^{-\eta_0}\,\pthb{ \delta_k\vartheta_{n-1}\rho_{n-1}^{-1} }^{\tfrac{1}{\gamma-1}}.
    \end{align*}
    Therefore, using the second bound in Lemma~\ref{lemma:sbm_clusters_lb2}, we get:
    \begin{align*}
      \nu_{t,2h}(B(z,r_k))
      &\leq c_1\,\delta_k^{s-\epsilon}\cdot g(\delta_k)^{-\eta_0}\, \pthb{ \delta_k\vartheta_{n-1}\rho_{n-1}^{-1} }^{\tfrac{1}{\gamma-1}} \cdot r_k^{d-\epsilon} \pthbb{ \delta_k^{-\tfrac{d}{2}} + \rho_{n-1}^{\tfrac{1}{\gamma-1}}\delta_k^{-\tfrac{\gamma}{\gamma-1}} \vartheta_{n-1}^{1-\tfrac{d}{2}} } \\
      &\leq c_1\, r_k^{2\gamma h-2+2s-\eta\epsilon} \pthbb{ \pthb{\delta_k\rho_{n-1}^{-1}}^{\tfrac{1}{\gamma-1}-(\gamma h -1)} + \delta_k^{\tfrac{d}{2}-\gamma h} \vartheta_{n-1}^{-\tfrac{d}{2}+\tfrac{\gamma}{\gamma-1}} } \\
      &\leq c_1\, r_k^{2\gamma h-2+2s-\eta\epsilon},
    \end{align*}
    since as previously $\delta_k\leq \rho_{n-1}\leq\vartheta_{n-1}$ and $d/2-\gamma h \geq d/2-\gamma/(\gamma-1)$.
  \end{enumerate}
  This last inequality concludes the proof.
\end{proof}

We may now present the second part of the proof of Proposition~\ref{prop:spectrum_excursions}.
\begin{lemma}  \label{lemma:sbm_spectrum_lb}
  Suppose $x\in\R^d$. Then, the following statements  hold $\N_x$-a.e.
  \begin{enumerate}[(a)]
    \item Assuming $d\geq 2$, the spectrum of singularities of the excursion measure $\Xii_t(\dt x)$ is equal to:
    \begin{align*}
      \forall h\in\ivffb{\tfrac{1}{\gamma},\tfrac{1}{\gamma-1}}\cap\ivfob{0,\tfrac{d}{2}};\quad \dimH \,E(2h,\Xii_t)\cap V \geq 2\gamma h - 2,
    \end{align*}
    for any $t>0$ and open set $V\subset\R^d$ such that $\Xii_t(V)>0$.
    \item Supposing $d\geq \tfrac{2\gamma}{\gamma-1}$, for any Borel set $F\subset\ivoo{0,h(\Ti)}$ satisfying the \emph{strong Frostman's lemma} \eqref{eq:strong_frostman} and such that $\dimH F=\dimP F$, we have:
    \begin{align*}
      \forall h\in\ivffb{\tfrac{1}{\gamma}, \tfrac{1}{\gamma-1}};\quad \dimH \bigcup_{s\in F} E(2h,\Xii_s) \geq 2\gamma h - 2 + 2\dimH F.
    \end{align*}
     \item For any dimension $d\geq 1$,
    \begin{align*}
      \forall h\in\ivfob{0,\tfrac{1}{\gamma}}\cap\ivfob{0,\tfrac{d}{2}};\quad \dimH \brcb{t>0 : E(2h,\Xii_t) \neq \vset } \geq \gamma h,
    \end{align*}
    Moreover, for every $t>0$ and any $h\in\ivfob{0,\tfrac{1}{\gamma}}\cap\ivfob{0,\tfrac{d}{2}}$, $E(2h,\Xii_t)$ is either empty or has zero Hausdorff dimension.
  \end{enumerate}
\end{lemma}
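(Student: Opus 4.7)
The plan is to establish each part of Lemma~\ref{lemma:sbm_spectrum_lb} by exhibiting a measure supported on the appropriate iso-Hölder set and applying a mass distribution principle, the relevant mass bounds already having been obtained in Lemmas~\ref{lemma:sbm_mass_principle_lb} and~\ref{lemma:sbm_mass_principle_lb2}. The matching upper bounds from Lemma~\ref{lemma:sbm_spectrum_ub} then deliver the remaining pieces.

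For part~(a), fix $h\in\ivff{\tfrac{1}{\gamma},\tfrac{1}{\gamma-1}}\cap\ivfo{0,\tfrac{d}{2}}$ and $t>0$. The pushforward $\nu_{t,2h}$ defined in~\eqref{eq:pushforward_measure} is supported on $W(G(t,h))\subset F(2h,\Xii_t)$ and satisfies the mass bound~\eqref{eq:sbm_mass_pple_spectrum1}. Frostman's lemma then yields $\dimH\supp\nu_{t,2h}\geq d\wedge(2\gamma h-2)-\eta\eps$, and letting $\eps\to 0$ along a sequence gives $\dimH F(2h,\Xii_t)\geq 2\gamma h-2$. By the Cantor-like construction in~\cite{Balanca-2015b}, $\mu_{t,h}$ is concentrated on the tree iso-Hölder set $E(h,\ell^t)=\{\sigma:\alpha_\ell(\sigma,\Ti)=h\}$; Lemma~\ref{lemma:sbm_bound_holder} yields $\alpha_{\Xii_t}(W_\sigma)\leq 2h$ at $\nu_{t,2h}$-a.e.\ point $W_\sigma$, and the matching reverse inequality is obtained via a quantitative adaptation of the proof of Lemma~\ref{lemma:sbm_mass_principle_lb}, combining the tree-side exponent in a neighbourhood of $\sigma$ with an LND-based control (Lemma~\ref{lemma:sbm_lnd}) of the spatial collisions contributed by far tree-nodes. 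Consequently $\nu_{t,2h}$ is supported on $E(2h,\Xii_t)$, giving $\dimH E(2h,\Xii_t)\geq 2\gamma h-2$. For the localisation to an open set $V$ with $\Xii_t(V)>0$, we pick $\sigma^\star\in\Ti(t)$ with $W_{\sigma^\star}\in V$ and apply the preceding construction to the subtree $\Ti_{\sigma'}$ rooted at an ancestor $\sigma'\preceq\sigma^\star$ chosen close enough to level $t$ that the Hölder continuity of $W$ guarantees $W(\Ti_{\sigma'}\cap\Ti(t))\subset V$; the branching property then reduces the localised statement to the case already established.

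For part~(b), the argument proceeds identically with the pushforward $\nu_{F,2h}$ of~\eqref{eq:pushforward_measure2} and the uniform mass bound from Lemma~\ref{lemma:sbm_mass_principle_lb2}, which under the assumption $d\geq\tfrac{2\gamma}{\gamma-1}$ yields $\dimH\supp\nu_{F,2h}\geq 2\gamma h-2+2\dimH F$ after Frostman's lemma. The same LND-based upgrade as in~(a), but carried out uniformly in $s\in F$, shows that $\nu_{F,2h}$ is supported on $\bigcup_{s\in F}E(2h,\Xii_s)$, producing the desired lower bound. The auxiliary hypothesis $\dimH F=\dimP F$ is needed only to match the corresponding upper bound from Lemma~\ref{lemma:sbm_spectrum_ub}.

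For part~(c), the fixed-$t$ zero-dimensional statement follows by revisiting the covering argument of Lemma~\ref{lemma:sbm_spectrum_ub}: the cover~\eqref{eq:cover_Gth} of $V(t,h)\supset F(2h-8\eps d,\Xii_t)$ combined with the uniform bound of Lemma~\ref{lemma:unif_bound_Th_cor} yields at each scale $\delta_n$ at most $O(\delta_n^{1-\gamma h})$ balls of radius $r_n=\delta_n^{(1-\eps)/2}$; for $h<\tfrac{1}{\gamma}$, the associated Hausdorff $s$-sum is summable for every $s>0$, so $\dimH F(2h,\Xii_t)=0$. For the time lower bound, we invoke the analogous result on stable trees from~\cite{Balanca-2015b}: $\Nb$-a.e.\ there exists $T_h\subset\ivoo{0,h(\Ti)}$ with $\dimH T_h\geq\gamma h$ such that every $t\in T_h$ carries some $\sigma_t\in\Ti(t)$ with $\alpha_\ell(\sigma_t,\Ti)=h$. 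Lemma~\ref{lemma:sbm_bound_holder} gives $\alpha_{\Xii_t}(W_{\sigma_t})\leq 2h$, and a pointwise LND-based collision argument --- again modelled on the proof of Lemma~\ref{lemma:sbm_mass_principle_lb} but applied at the single node $\sigma_t$ rather than $\mu_{t,h}$-a.e.\ --- upgrades this to an equality for all but a null set of $t\in T_h$, showing $t\in\brcb{s>0:E(2h,\Xii_s)\neq\vset}$. The main obstacle throughout is precisely this upgrade of the inequality $\alpha_{\Xii_t}(W_\sigma)\leq 2\alpha_\ell(\sigma,\Ti)$ of Lemma~\ref{lemma:sbm_bound_holder} to an equality, which requires a pointwise quantitative control of the spatial collisions of the tree-indexed Gaussian field in the spirit of Lemmas~\ref{lemma:sbm_clusters_lb1} and~\ref{lemma:sbm_clusters_lb2}.
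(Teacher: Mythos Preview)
Your proposal has a genuine gap at exactly the point you yourself flag as ``the main obstacle'': the passage from the inequality $\alpha_{\Xii_t}(W_\sigma)\leq 2h$ (Lemma~\ref{lemma:sbm_bound_holder}) to the equality $\alpha_{\Xii_t}(W_\sigma)=2h$. You assert that this upgrade follows from ``a quantitative adaptation of the proof of Lemma~\ref{lemma:sbm_mass_principle_lb}'' or ``a pointwise LND-based collision argument'', but you do not carry it out, and the lemmas you cite control the wrong quantity: Lemmas~\ref{lemma:sbm_clusters_lb1}, \ref{lemma:sbm_clusters_lb2} and~\ref{lemma:sbm_mass_principle_lb} bound the \emph{pushforward} mass $\nu_{t,2h}(B(z,r))$, not the excursion mass $\Xii_t(B(W_\sigma,r))=\ell^t\bigl(\{\sigma':|W_{\sigma'}-W_\sigma|<r\}\bigr)$. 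Proving $\alpha_{\Xii_t}(W_\sigma)\geq 2h$ pointwise would require an almost-sure upper bound on the latter at $\mu_{t,h}$-typical $\sigma$, which is a different and substantially harder statement that is not available from the existing lemmas.

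The paper avoids this difficulty entirely by a standard measure--dimension trick you do not use. Since $\nu_{t,2h}$ satisfies $\nu_{t,2h}(B(z,r))\leq c\,r^{2\gamma h-2-\eta\eps}$ (Lemma~\ref{lemma:sbm_mass_principle_lb}) and, by the \emph{upper} bound of Lemma~\ref{lemma:sbm_spectrum_ub}, $\dimH F(2h',\Xii_t)\leq 2\gamma h'-2<2\gamma h-2$ for every $h'<h$, one gets $\nu_{t,2h}\bigl(F(2h',\Xii_t)\bigr)=0$ directly. As $\supp\nu_{t,2h}\subset F(2h,\Xii_t)$, this forces $\nu_{t,2h}\bigl(E(2h,\Xii_t)\bigr)>0$, and the mass distribution principle concludes. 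The same mechanism runs parts~(b) and~(c): for~(b) one uses $\nu_{F,2h}$ together with the upper bound $\dimH\bigcup_{s\in F}F(2h',\Xii_s)\leq 2\gamma h'-2+2\dimP F$, and here the hypothesis $\dimH F=\dimP F$ is essential \emph{in the lower bound} (not merely to match the final answer, as you suggest), since otherwise the upper bound exponent need not fall below the $\nu_{F,2h}$-mass exponent; for~(c) one uses the time-measure $\mu_h$ of \cite[Lemma~4.29]{Balanca-2015b} together with $\dimH\{t:F(2h',\Xii_t)\neq\vset\}\leq\gamma h'$. No pointwise reverse inequality is ever needed.
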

\begin{proof}
  Suppose $\Hi\subset\ivofb{\tfrac{1}{\gamma},\tfrac{1}{\gamma-1}}$ and $h\in\Hi\cap\ivfob{0,\tfrac{d}{2}}$ (whenever $d\geq 2$). According to the result of Lemma~\ref{lemma:sbm_mass_principle_lb} and the upper bound in Lemma \ref{lemma:sbm_spectrum_ub}, one has $\N_x$-a.e.
  \begin{align*}
    \forall t\in\ivoo{0,h(\Ti)},\ \forall h'\in\ivfo{0,h};\quad \nu_{t,2h}\pthb{ E(2h',\Xii_t) } = 0.
  \end{align*}
  In addition, defining the collection $G_W(t,2h) = W(G(t,h))$, we observe according to Lemma~\ref{lemma:sbm_bound_holder} that for every $z\in G_W(t,2h)$, $\alpha(z,\Xii_t) \leq 2h$. Therefore, $G_W(t,2h)\subset F(2h,\Xii_t)$ and
  \begin{align*}
    \nu_{t,2h}\pthb{ E(2h,\Xii_t) } \geq  \nu_{t,2h}\pthbb{ G_W(t,h)\setminus\bigcup_{h'<h} E(2h',\Xii_t) } > 0
  \end{align*}
  Lemma~\ref{lemma:sbm_mass_principle_lb} combined with the celebrated mass distribution principle (see for instance \cite[Th. 4.2]{Falconer-2003}) then gives the desired lower bound: $\dimH E(2h,\Xii_t) \geq 2\gamma h-2$. In the specific case $h=\tfrac{1}{\gamma}$, we make use of the specific construction of a measure $\mu_{t,h}$ described in \cite[Lemma 4.23]{Balanca-2015b} and the observation in the former that for any $h'<\tfrac{1}{\gamma}$, $\mu_{t,\tfrac{1}{\gamma}}(E(h',\ell^t)) = 0$. The properties of the set $V(t,h)$ presented in Lemma \ref{lemma:sbm_spectrum_ub} are then sufficient to conclude that $E(\tfrac{2}{\gamma},\Xii_t)$ is non-empty for every $t\in\ivoo{0,h(\Ti)}$. Finally, the self-similarity of stable trees immediately provides the local version for any open set $V$, concluding the proof of the first part.\vsp

  The proof of the second statement is rather similar. Suppose $F\subset\ivoo{0,h(\Ti)}$ satisfies the \emph{strong Frostman's lemma} \eqref{eq:strong_frostman} and is such that $\dimH F=\dimP F$. According to the upper bound Lemma \ref{lemma:sbm_spectrum_ub}, one obtains as well:
  \begin{align*}
    \forall h'\in\ivfo{0,h};\quad \nu_{F,2h}\pthbb{ \bigcup_{s\in F} E(2h',\Xii_s) } = 0.
  \end{align*}
  Note that the strong Frostman assumption is the key element to obtain the previous equality, for any $h'\in\ivfo{0,h}$.
  Lemma~\ref{lemma:sbm_mass_principle_lb2} and the mass distribution principle then entail the desired result. The case $h=\tfrac{1}{\gamma}$ is also treated similarly, still relying on the construction presented in \cite[Lemma 4.23]{Balanca-2015b}.

  Finally, the third part is direct consequence of Lemma~\ref{lemma:sbm_bound_holder} proving that $\alpha_{\Xii_t}(W_\sigma) \leq 2 \alpha_\ell(\sigma,\Ti)$ and the construction in \cite[Lemma 4.29]{Balanca-2015b} of proper probability measures $\mu_h(\dt t)$ carried by the sets $\brcb{t>0 : F(h,\ell^t)\neq\vset}$. Using the same measures, the upper bound previously obtained and the mass distribution principle give the desired estimate.
\end{proof}

\section{Proof of Theorems \ref{th:sbm_spectrum1}, \ref{th:sbm_spectrum2} and  \ref{th:support_dimension}: spectrum of stable super-Brownian motion} \label{sec:spectrum_sbm}

Finally, let us present of the proofs of the main Theorems \ref{th:sbm_spectrum1}, \ref{th:sbm_spectrum2} and \ref{th:support_dimension} on stable super-Brownian motion.
\begin{proof}[Proof of Theorems \ref{th:sbm_spectrum1} and \ref{th:sbm_spectrum2}]
  Following the construction recalled in Proposition~\ref{prop:sbm_construction}, we can assumed that the stable SBM has the following representation: $X_t(\dt x) = \sum_{i\in\Ii} \Xii_t(\Ti^i,\Wi^i)(\dt x)$.
  For any $\eps>0$, there is only a finite number of trees $\Ti^i$ such that $h(\Ti^i)>\eps$. As a consequence, $\Pr_\mu$-a.s.
  \begin{align*}
    \forall t>0,\ \forall x\in\R^d;\quad \alpha_{X_t}(x) = \inf_{i\in\Ii} \alpha_{\Xii^i_t}(x),
  \end{align*}
  noting that the infimum is in fact a minimum over a finite collection.
  Hence, $\Pr_\mu$-a.s. for every $t>0$ and any $h\geq 0$, $F(2h,X_t) = \cup_{i\in\Ii} F(2h,\Xii^i_t)$, and Proposition~\ref{prop:spectrum_excursions} on the spectrum of excursion measures entails our results on stable super-Brownian motion.
\end{proof}

The proof of Theorem~\ref{th:support_dimension} is a slight variation of the arguments presented in Section~\ref{sec:stable_sbm_high_dim}.
\begin{proof}[Proof of Theorem \ref{th:support_dimension}]
  The first result on the Hausdorff dimension is a direct consequence of the uniform upper bound (Lemma \ref{lemma:sbm_spectrum_ub}) and the mass distribution principle (Lemma \ref{lemma:sbm_mass_principle_lb}). An equivalent statement on the packing dimension is straightforward using the image of covers described in \cite[Lemma 4.4]{Balanca-2015b} and the $\tfrac{1}{2}$-Hölder continuity of the process $(W_\sigma)_{\sigma\in\Ti}$.

  To obtain the second uniform statement, we also rely on \cite[Lemma 4.4]{Balanca-2015b} to construct a proper cover and obtain the optimal upper bound. Lemma~\ref{lemma:sbm_mass_principle_lb2} can be adapted to this particular setting, using for that purpose the classic Frostman's lemma: for any Borel set $F$ and every $\alpha<\dimH F$, there exists a compact set $F_\star\subset F$ such that $\Hi^\alpha(F_\star)\in\ivoo{0,\infty}$ and
  \begin{align*}
    \forall r>0,\ \forall t\in\R;\quad \Hi^\alpha\pthb{F_\star\cap B(t,r)} \leq c_F\,r^\alpha,
  \end{align*}
  for some positive constant $c_F$. The desired lower bound is then obtained by adapting Lemma~\ref{lemma:sbm_mass_principle_lb2} and replacing the measure $\mu_F(\dt t)$ by $\Hi^\alpha\pthb{F_\star\cap\dt t}$.
\end{proof}

\appendix

\section{Proof of Lemma~\ref{lemma:trees_suitable_dyadics} on stable trees}  \label{sec:appendix}

In this section, we present the proof of Lemma~\ref{lemma:trees_suitable_dyadics}. The latter relies heavily on the specific construction of the measures $\mu_{a,h}(\dt\sigma)$ presented in \cite{Balanca-2015b}, and even though quite technical, is only a consequence of the properties and estimates presented in this previous work.
\begin{lemma*}[\ref{lemma:trees_suitable_dyadics}]
  Suppose $\Hi\subset\ivofb{\tfrac{1}{\gamma},\tfrac{1}{\gamma-1}}$ is a closed interval and $\epsilon>0$. $\Nb(\dt \Ti)$-a.e., for every $n$ large enough, any $\delta_k\in\ivfo{\rho_{n},\rho_{n-1}}$, $u\in\Di_k$ and $h\in\Hi_n$, there exists a collection of nodes $\Vi_n(u,\delta_k,h)\subset\Ti(u)$ satisfying the following properties:
  \begin{enumerate}[(i)]
    \item for every $j\rho_n\in\ivoo{\epsilon,h(\Ti)-\epsilon}$, there exists $u\in\Di_k\cap\ivof{0,j\rho_n}$ such that
    \begin{align*}
      \forall \Ti_\sigma\in\Vbb_n(j,h), \ \exists \sigma'\in\Vi_n(u,\delta_k,h);\quad d(\sigma,\sigma') \leq 4\delta_k;
    \end{align*}
    \item for every $u\in\Di_k$, $h\in\Hi_n$ and any $\sigma\in\Vi_n(u,\delta_k,h)$,
    \begin{align*}
      \#\pthb{ \Bi(\sigma,2r) \cap \Vi_n(u,\delta_k,h) } \leq c_0
      \begin{cases}
        1 + \pthb{r\vartheta_n^{-1}}^{\tfrac{1}{\gamma-1}} g(r)^{-\beta}  &\text{if } \delta_k\in\ivfo{\rho_n,\vartheta_n}
        \\ &\text{and }r\geq\delta_k; \\
        \pthb{r\delta_k^{-1}}^{\tfrac{1}{\gamma-1}} g(\delta_k)^{-\beta} &\text{if } \delta_k\in\ivfo{\vartheta_{n},\rho_{n-1}} \\
        & \text{and } r\in\ivfo{\delta_k,\rho_{n-1}}; \\
        \pthb{\rho_{n-1}\delta_k^{-1}}^{\tfrac{1}{\gamma-1}} g(\delta_k)^{-\beta}\pthB{1+\pthb{r\vartheta_{n-1}^{-1}}^{\tfrac{1}{\gamma-1}} } & \text{if } \delta_k\in\ivfo{\vartheta_{n},\rho_{n-1}} \\
         &\text{and } r\geq\rho_{n-1}, \\
      \end{cases}
    \end{align*}
    where the constants $\beta$ and $c_0$ are independent of the parameters $n$, $k$ and $h$, and $\vartheta_n \eqdef \rho_n^{(\gamma-1)(\gamma h -1 )} g(\rho_n)^{-\alpha\gamma(\gamma-1)}$.

  \end{enumerate}
  Finally, we will simply denote by $\Vi(\delta_k,h)$ the full collection $\cup_{u\in\Di_k} \Vi(u,\delta_k,h)$.
\end{lemma*}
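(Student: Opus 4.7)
My approach is to construct $\Vi_n(u,\delta_k,h)$ by associating to each subtree $\Ti_\sigma\in\Vbb_n(j,h)$ (whose root $\sigma$ lies at height $(j-1)\rho_n$) a single representative $\sigma'\in\Ti(u)$, taken as the vertex of $\iivff{\rho,\sigma}$ at level $u$ (or, when $u>(j-1)\rho_n$, on a fixed continuation into $\Ti_\sigma$). The set $\Vi_n(u,\delta_k,h)$ is then the collection of such representatives over those $j$ with $j\rho_n$ in a $\delta_k$-neighbourhood of $u$. Property~(i) is then immediate: for $j\rho_n\in\ivoo{\eps,h(\Ti)-\eps}$, I would pick $u$ as the largest element of $\Di_k$ with $u\leq(j-1)\rho_n$; since $\rho_n\leq\delta_k$ the corresponding representative satisfies $d(\sigma,\sigma')=(j-1)\rho_n-u\leq\delta_k$, well below the stated bound.

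The substantive step is the counting bound~(ii). By the geodesic correspondence, $\#\pthb{\Bi(\sigma,2r)\cap\Vi_n(u,\delta_k,h)}$ is controlled by the number of roots of elements of $\bigcup_j\Vbb_n(j,h)$ (with $j\rho_n$ in the chosen $\delta_k$-window) whose geodesic to $\rho$ meets $\Bi(\sigma,r+O(\delta_k))$. Applying the branching property first at level $u-O(\delta_k)$ and then at $(j-1)\rho_n$, this count is stochastically dominated by a Poisson variable whose mean is of order
\begin{align*}
  \ell^{u-O(\delta_k)}\pthb{\Bi(\sigma,r+O(\delta_k))}\cdot\Nb\pthb{\Vbb_n(1,h)\neq\vset}.
\end{align*}
The Ray--Knight theorem and the CSBP estimates used in the proofs of Lemmas~\ref{lemma:unif_bound_Th} and~\ref{lemma:inf_local_time} yield $\Nb\pthb{\Vbb_n(1,h)\neq\vset}\asymp\rho_n^{1-\gamma h}$ up to logarithmic factors, making the typical inter-representative spacing equal to $\vartheta_n$; this is precisely what dictates the threshold between the three regimes.

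I would then distinguish three cases according to the position of $r$ relative to $\vartheta_n$ and $\rho_{n-1}$. In the regime $\delta_k\in\ivfo{\rho_n,\vartheta_n}$, for $r<\vartheta_n$ the Poisson mean is $\ll 1$ and a Chernoff bound forces at most $O(1)$ representatives, producing the additive $1$; for $r\geq\vartheta_n$ the CSBP scaling $\ell^{\cdot}\pthb{\Bi(\sigma,r)}\sim r^{1/(\gamma-1)}$ combines with the above intensity to give the $(r\vartheta_n^{-1})^{1/(\gamma-1)}$ factor. In the regime $\delta_k\in\ivfo{\vartheta_n,\rho_{n-1}}$ with $r<\rho_{n-1}$, the scale $\delta_k$ is already coarser than the inter-cluster spacing $\vartheta_n$, so the Poisson bookkeeping is superseded by the direct CSBP scaling of the local time at scale $\delta_k$ provided by Lemma~\ref{lemma:unif_bound_Th}, yielding $(r\delta_k^{-1})^{1/(\gamma-1)}$. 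Finally for $r\geq\rho_{n-1}$ two layers of branching contribute: $(\rho_{n-1}\delta_k^{-1})^{1/(\gamma-1)}$ records the density of $\Vbb_n(j,h)$-roots within one $\Vbb_{n-1}(j',h)$-subtree, while $1+(r\vartheta_{n-1}^{-1})^{1/(\gamma-1)}$ counts how many generation-$(n-1)$ subtrees the ball reaches.

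The main obstacle is not any single estimate but the bookkeeping required to make all bounds uniform in $u\in\Di_k$, $\delta_k\in\ivfo{\rho_n,\rho_{n-1}}$ and $h\in\Hi_n$. Following the Borel--Cantelli strategy of Lemmas~\ref{lemma:unif_bound_Th} and~\ref{lemma:inf_local_time}, I would verify that each Chernoff deviation event has probability $\lesssim\exp(-cn^{3/2})$, while the combined union bound over $\Hi_n$, the dyadic scales $\delta_k$, and the base points costs only $O(2^n)$ times a polynomial in $n$ (together with a $v(\rho_n)$-type control on the number of non-empty $\Vbb_n(j,h)$). Summation over $n$ still converges and Borel--Cantelli yields the stated uniform conclusion, the logarithmic losses $g(\cdot)^{-\beta}$ precisely absorbing the union-bound overhead.
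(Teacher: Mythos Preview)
Your construction of the representatives and the verification of~(i) match the paper's (project each root of $\Vbb_n(j,h)$ down to level $u=m\delta_k$, then quotient by ``same subtree in $\Tbb((m-1)\delta_k,\delta_k)$''). The approaches diverge on~(ii). The paper does not run a fresh Poisson/Chernoff/Borel--Cantelli argument; it simply quotes almost-sure properties that are \emph{built into} the construction of $\Vbb_n(j,h)$ in \cite[Lemmas~4.16--4.21]{Balanca-2015b}: distinct elements of $\Vbb_n(j,h)$ are $\vartheta_n$-separated by design, and for every $\sigma$ arising in the construction one has $\ell^u(\Bi(\sigma,2r))\in[\underline r(r),\overline r(r)]\asymp r^{1/(\gamma-1)}$ for $r\in[\vartheta_n,\rho_{n-1}]$, with analogous control at coarser scales inherited from the nested structure $(\Vbb_l)_{l\leq n}$. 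Given these properties, the counting bounds in~(ii) are a deterministic pigeonhole: divide the local time of $\Bi(\sigma,2r)$ by the minimal local time carried around each representative.

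Your Poisson route has a genuine gap. You treat $\Vbb_n(j,h)$ as if it were the collection of \emph{all} subtrees in $\Tbb((j-1)\rho_n,\rho_n)$ satisfying the local-time lower bound, so that the branching property makes the count conditionally Poisson and the intensity ``$\Nb(\Vbb_n(1,h)\neq\vset)$'' is meaningful. But $\Vbb_n(j,h)$ is a strict, hand-picked subcollection, nested across $n$ and deliberately thinned to enforce isolation; membership is not a property of the subtree alone, so the Poisson description does not apply. Even if you replaced $\Vbb_n(j,h)$ by the maximal collection to restore the Poisson structure, your bound on the mean then invokes ``the CSBP scaling $\ell^{\cdot}(\Bi(\sigma,r))\sim r^{1/(\gamma-1)}$'' as though it held uniformly. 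It does not: its failure is precisely the multifractal phenomenon the paper is analysing, and nothing in your construction prevents the representatives from landing at exceptional points where $\ell^u(\Bi(\sigma,2r))\gg r^{1/(\gamma-1)}$. The paper sidesteps both issues by importing the isolation and the two-sided local-time control directly from the construction in \cite{Balanca-2015b}, after which no further probabilistic argument is needed.
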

\begin{proof}
  Let us set $\delta_k\in\ivof{\rho_n,\rho_{n-1}}$, $u=m\delta_k\in\Di_k$ and $h\in\Hi_n$. We start by defining the following collection:
  \begin{align*}
    \Vi^\star_n(m\delta_k,\delta_k,h) \eqdef \bigcup_{j\rho_n\in\ivfo{(m+1)\delta_k,(m+2)\delta_k}} \bigcup_{\Ti_{\sigma}\in\Vbb_n(j,h)} \iivff{\rho(\Ti),\sigma}\cap\Ti(m\delta_k) \subset \Ti(m\delta_k).
  \end{align*}
  The collection $\Vi_n(m\delta_k,\delta_k,h)$ is then defined as the following equivalent class:
  \begin{align*}
    \Vi_n(m\delta_k,\delta_k,h) = \Vi^\star_n(m\delta_k,\delta_k,h) \,/ \sim_{\Ti(m\delta_k)}
  \end{align*}
  where $\sigma\sim_{\Ti(m\delta_k)}\sigma'$ if and only if there exists $\Ti_\varsigma\in\Tbb((m-1)\delta_k,\delta_k)$ such that $\sigma,\sigma'\in\Ti_\varsigma$. According to the previous definition, for any $\sigma,\sigma'$ belonging to the same equivalence class, we get $d(\sigma,\sigma')\leq 2\delta_k$. Consequently, for any $\sigma\neq\sigma'\in\Vi_n(m\delta_k,\delta_k,h)$, $d(\sigma,\sigma')\geq 2\delta_k$. The previous construction also clearly entails property \emph{(i)}.

  In order to verify the second point, we may naturally distinguish two different cases in our construction.
  \begin{enumerate}[(i)]
    \item Suppose first $\delta_k\in\ivfo{\rho_n,\vartheta_{n}}$. In order to verify $(ii)$, we make use of the estimates presented in \cite[Lemmas 4.16-4.21]{Balanca-2015b}. To start with, if $r\in\ivff{\delta_k,\vartheta_n}$, the construction described in \cite[Lemma 4.16]{Balanca-2015b} ensures that for any $\Ti_{\sigma}\in\Vbb_n(j,h)$, $\#\pthb{ \Bi(\sigma,2r) \cap \Vbb_n(j,h) } = 1$, and thus $\#\pthb{ \Bi(\sigma,2r) \cap \Vi_n(u,\delta_k,h) } = 1$ as well.
    If $r\in\ivoo{\vartheta_n,\rho_{n-1}}$, we also remark that according to the construction presented in \cite{Balanca-2015b}, for any $\sigma\in\Vi_n(u,\delta_k,h)$:
    \begin{align*}
      \forall r\in\ivff{\vartheta_n,\rho_{n-1}};\quad \ell^u(\Bi(\sigma,2r))\in\ivff{\underline{r}(r),\overline{r}(r)},
    \end{align*}
    where $\underline{r}(r) \eqdef \pthb{r \,g(r)^{1+\epsilon}}^{1/(\gamma-1)}$ and $\overline{r}(r) \eqdef \pthb{r / g(r)^{1+4\epsilon}}^{1/(\gamma-1)}$. The appropriate bound $\#\pthb{ \Bi(\sigma,2r) \cap \Vi_n(u,\delta_k,h) }$ is then a direct consequence of the previous estimates and the property $\ell^u(\Bi(\sigma,2\vartheta_n)) \geq c_0\,\vartheta_n^{1/(\gamma-1)} g(\rho_{n-1})^\epsilon$ (see \cite[Lemma 4.16]{Balanca-2015b}).

    If $r\geq \rho_{n-1}$, we use equivalent properties satisfied by the collections $\Vbb_{n}(k,h)$. Indeed, we know that the local time is similarly properly controlled at every scale, i.e. for any $\sigma\in\Vi_n(u,\delta_k,h)$
    \begin{align*}
      \forall l\in\brc{1,\dotsc,k}\ \forall r\in\ivff{\vartheta_l,\rho_{l-1}};\quad \ell^u(\Bi(\sigma,2r))\in\ivff{\underline{r}(r),\overline{r}(r)}.
    \end{align*}
    Hence, combining these precise estimates with the construction \cite[Lemma 4.16]{Balanca-2015b}, we get:
    \begin{align*}
      \forall l\in\brc{1,\dotsc,k-1}\ \forall r\in\ivff{\vartheta_l,\rho_{l-1}};\quad \#\pthb{ \Bi(\sigma,2r) \cap \Vi_n(u,\delta_k,h) } \leq \pthb{r\vartheta_n^{-1}}^{\tfrac{1}{\gamma-1}} g(r)^{-\beta},
    \end{align*}
    for some $\beta>0$ independent of $r$ and $l$. Finally, the extension to any $r\in\ivfo{\rho_l,\vartheta_l}$ is a consequence of the isolation of nodes constructed at every scale, entailing the equality:
    \begin{align*}
      \forall l\geq k-1\ \forall r\in\ivff{\rho_l,\vartheta_l};\quad \#\pthb{ \Bi(\sigma,2r) \cap \Vi_n(u,\delta_k,h) } = \#\pthb{ \Bi(\sigma,2\rho_l) \cap \Vi_n(u,\delta_k,h) }.
    \end{align*}

    \item Let now consider the second case $\delta_k\in\ivof{\vartheta_n,\rho_{n-1}}$. In order to check the second bound, we still rely on the construction properties presented in \cite{Balanca-2015b}. In particular, for any $\sigma'\in\Vi_n(m\delta_k,\delta_k,h)$ and $\sigma'\in\Ti((m+1)\delta_k)$ such that $\sigma=\iivff{\rho,\sigma'}\cap\Ti(m\delta_k)$, we know that:
    \begin{align*}
      \forall r\in\ivff{\vartheta_n,\rho_{n-1}};\quad \ell^u(\Bi(\sigma',2r)) \leq \overline{r}(r) \quad\text{and}\quad \ell^u(\Bi(\sigma',2\delta_k)) \geq \pthb{\delta_k\,g(\delta_k)^{3}}^{1/(\gamma-1)}.
    \end{align*}
    Note that the latter lower bound is more precisely a consequence of \cite[Lemma 4.16]{Balanca-2015b} and the exponential tail \cite[Lemma 3.4]{Balanca-2015b}. These tight estimates of the local time provide the first bound whenever $r\in\ivff{\delta_k,\rho_{n-1}}$.

    On the other hand, if $r\geq\rho_{n-1}$, we then simply combine the bound obtained in the first case with the second one: every element in $\Vbb_{n-1}(h)$ contains at most $\pthb{\rho_{n-1}\delta_k^{-1}}^{1/(\gamma-1)} g(\delta_k)^{-\beta}$ nodes in $\Vi_n(m\delta_k,\delta_k,h)$. In addition, we also know from the previous case that every ball $\Bi(\sigma,2r)$ contains at most $1 + \pthb{r\vartheta_{n-1}^{-1}}^{1/(\gamma-1)} g(r)^{-\beta}$ elements in $\Vbb_{n-1}(h)$ rooted at the same level. The combination of the two previous estimates then leads to the expected bound, up to a modification of the constant $\beta>0$.
  \end{enumerate}
\end{proof}

\section{Proof of statement \eqref{eq:weak_asymp}}  \label{sec:appendix2}

Rather surprisingly, we could not find a presentation of statement \eqref{eq:weak_asymp} in the literature on superprocesses. Nevertheless, the latter follows easily from a set of well-known results. Hence, we quickly present in the following lemma the proof of this equality.
\begin{lemma}  \label{lemma:literature_results}
  For any fixed $t>0$,
  \begin{align*}
    \lim_{r\rightarrow 0} \frac{\log X_t(B(x,r))}{\log r} = \frac{2}{\gamma-1}\quad X_t(\dt x)\text{-a.e.} \quad \Pr_\mu\text{-a.s.}
  \end{align*}
\end{lemma}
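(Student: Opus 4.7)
The plan is to combine the already-known Hausdorff and packing dimensions of $\supp X_t$ from \eqref{eq:sbm_support_fixt} with Billingsley's theorem on local dimensions of measures, together with a standard uniform upper bound on the local mass $X_t(B(x,r))$. Throughout I assume $d \geq \tfrac{2}{\gamma-1}$, the regime in which the stated exponent $\tfrac{2}{\gamma-1}$ is consistent with the regularity picture summarised in the introduction.

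I would first establish the upper bound on the local exponent. Writing $\alpha_{X_t}(x) = \liminf_{r\to 0} \log X_t(B(x,r))/\log r$ and its limsup analogue $\overline{\alpha}_{X_t}(x)$, the Hausdorff and packing versions of Billingsley's theorem show that if $\alpha_{X_t}(x) > \tfrac{2}{\gamma-1}$ (respectively $\overline{\alpha}_{X_t}(x) > \tfrac{2}{\gamma-1}$) on a Borel set $A$ of positive $X_t$-mass, then $\dimH A > \tfrac{2}{\gamma-1}$ (respectively $\dimP A > \tfrac{2}{\gamma-1}$). Since $A \subset \supp X_t$, this would contradict \eqref{eq:sbm_support_fixt}, yielding $\alpha_{X_t}(x) \leq \overline{\alpha}_{X_t}(x) \leq \tfrac{2}{\gamma-1}$ for $X_t$-a.e.\ $x$.

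The matching lower bound $\alpha_{X_t}(x) \geq \tfrac{2}{\gamma-1}$ $X_t$-a.e.\ requires the following uniform estimate: for every $\epsilon > 0$, $\Pr_\mu$-a.s.\ there exist random constants $C_\epsilon, r_0 > 0$ such that
\begin{align*}
  X_t\pthb{B(x,r)} \leq C_\epsilon\, r^{2/(\gamma-1) - \epsilon}, \qquad \forall\, x \in \R^d,\ r \in (0, r_0).
\end{align*}
This estimate is classical: for $\gamma = 2$ it is a direct consequence of Perkins' exact Hausdorff-gauge result \cite{Perkins-1989}, and for $\gamma \in (1,2)$ of the extensions by \citet{Duquesne-2009,Duquesne.Duhalde-2014}. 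Alternatively, within the L\'evy snake framework of Section~\ref{sec:notations}, one can derive it from Lemma~\ref{lemma:unif_bound_Th_cor} (bounding the total local time of subtrees lying in small neighbourhoods of level sets) together with the $(\tfrac{1}{2}-\epsilon)$-H\"older continuity of the snake $W$ and the representation \eqref{eq:sbm_excursion_measures}. With this uniform bound at hand, the mass distribution principle gives $X_t(E) \leq C_\epsilon\, \mathcal{H}^{2/(\gamma-1) - \epsilon}(E)$ for every Borel set $E$, so $X_t$ charges no set of Hausdorff dimension strictly less than $\tfrac{2}{\gamma-1} - \epsilon$. Applying the upper direction of Billingsley's theorem to $G_s := \{\alpha_{X_t} \leq s\}$ with $s < \tfrac{2}{\gamma-1} - \epsilon$ yields $\dimH G_s \leq s$, and hence $X_t(G_s) = 0$; letting $\epsilon \downarrow 0$ concludes.

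The main obstacle is the uniform modulus for $X_t(B(x,r))$: everything else amounts to a routine two-step application of Billingsley's theorem together with the mass distribution principle, and becomes immediate once that modulus is in hand.
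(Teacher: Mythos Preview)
Your upper bound argument (that $\overline{\alpha}_{X_t}(x)\leq\tfrac{2}{\gamma-1}$ $X_t$-a.e.) is correct and is exactly what the paper does: invoke the density/Billingsley theorem together with $\dimP\supp X_t=\tfrac{2}{\gamma-1}$.

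The lower bound, however, rests on a claim that is false when $\gamma\in(1,2)$. The uniform modulus
\[
X_t\pthb{B(x,r)}\leq C_\epsilon\,r^{2/(\gamma-1)-\epsilon}\quad\text{for all }x\in\R^d,\ r<r_0
\]
does \emph{not} hold. Indeed, the central result of this very paper (Theorem~\ref{th:sbm_spectrum1}(a)) shows that, for $d\geq 2$ and every $t$ with $X_t\neq 0$, there exist points $x$ with $\alpha_{X_t}(x)=h$ for every $h\in\bigl[\tfrac{2}{\gamma},\tfrac{2}{\gamma-1}\bigr]$; at any such $x$ with $h<\tfrac{2}{\gamma-1}-\epsilon$ one has $X_t(B(x,r_n))\geq r_n^{h+\epsilon'}\gg r_n^{2/(\gamma-1)-\epsilon}$ along a sequence $r_n\to 0$, which defeats the claimed uniform bound. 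This is precisely the contrast with the quadratic case $\gamma=2$ highlighted around \eqref{eq:unif_sbm_holder}: Perkins' result gives a uniform modulus only for $\gamma=2$. The references you cite for $\gamma\in(1,2)$ --- \cite{Duquesne-2009,Duquesne.Duhalde-2014} --- concern the exact \emph{packing} measure, which controls lower densities of $X_t$, not the upper density you need. The alternative route you sketch via Lemma~\ref{lemma:unif_bound_Th_cor} and snake H\"older continuity leads to Lemma~\ref{lemma:sbm_cluster_mass}, which again does not produce a uniform modulus with exponent $\tfrac{2}{\gamma-1}$ but only dimension bounds on the exceptional sets.

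The paper's proof sidesteps this by using an $X_t$-a.e.\ (not uniform) estimate: for each $s<\tfrac{2}{\gamma-1}$ the $s$-energy $\iint |x-y|^{-s}\,X_t(\dt x)X_t(\dt y)$ is a.s.\ finite (this is the content of \cite[Theorem~6.3]{Duquesne.LeGall-2005}), and then the standard potential-theoretic consequence gives $\limsup_{r\to 0} r^{-s}X_t(B(x,r))=0$ for $X_t$-a.e.\ $x$. You should replace your uniform-modulus step by this energy argument.
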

\begin{proof}
  Let us start by presenting the proof of the lower bound. For any fixed $t>0$, according to the proof of Theorem 6.3 in \cite{Duquesne.LeGall-2005} and the well-known Frostman lemma (\cite[Th. 4.13]{Falconer-2003}), for any $s<\tfrac{2}{\gamma-1}$:
  \begin{align*}
    \limsup_{r\rightarrow 0} X_t\pthb{B(x,r)} \, r^{-s} = 0 \quad X_t(\dt x)\text{-a.e.} \quad \Pr_\mu\text{-a.s.}
  \end{align*}
  hence proving that $\liminf_{r\rightarrow 0} \tfrac{\log X_t(B(x,r))}{\log r} \geq \tfrac{2}{\gamma-1}$ $X_t(\dt x)$-a.e.

  In order to obtain the other side inequality, let us recall a property on the packing dimension \cite[Prop. 2.3]{Falconer-1997}: for a Borel set $F$ and a finite measure $\mu$,
  \begin{align*}
    \text{if }\limsup_{r\rightarrow 0} \frac{\log \mu(B(x,r))}{\log r} \geq s \text{ for all }x\in F\text{ and } \mu(F) > 0 \text{, then }\dimP F \geq s.
  \end{align*}
  Let us suppose now there exists an event $\Omega_0$ of positive probability such that for every $\omega\in\Omega_0$, there is a set $F\subset\R^d$ satisfying
  \begin{align*}
    \limsup_{r\rightarrow 0} \frac{\log X_t(B(x,r))}{\log r} \geq s \text{ for all }x\in F\text{ and } X_t(F) > 0.
  \end{align*}
  for some $s>\tfrac{2}{\gamma-1}$. Using the previous property of the packing dimension, it would clearly contradict Theorem~\ref{th:support_dimension}, and more precisely that $\dimP \supp X_t = \tfrac{2}{\gamma-1}$ on the event $X_t(R^d) > 0$, hence proving the upper bound.
\end{proof}

\bibliographystyle{abbrvnat-simple}
\renewcommand{\bibfont}{\normalfont\small}
\setlength{\bibsep}{4pt}

\end{document}